\definecolor{darkgreen}{rgb}{0.00,0.50,0.10}
\definecolor{lightgreen}{rgb}{0.20,0.70,0.30}
\newtheorem{theorem}                   {Theorem}
\newtheorem{lemma}           [theorem] {Lemma}   
\newtheorem{corollary}       [theorem] {Corollary}   
\newtheorem{proposition}     [theorem] {Proposition}  
\newtheorem{claim}            {Claim} 
\newtheorem{fact}			 [theorem]{Fact}
\newtheorem{definition}      [theorem] {Definition}
\newtheorem{remark}          [theorem] {Remark}
\theoremstyle{remark} 
\newtheorem{subclaim}            {Subclaim}
\newcommand{\oldqed}{}
\def\endofClaim{\hfill\scalebox{.6}{$\Box$}}
\newenvironment{claimproof}[1][Proof]{
  \renewcommand{\oldqed}{\qedsymbol}
  \renewcommand{\qedsymbol}{\endofClaim}
  \begin{proof}[#1]
}{
  \end{proof}
  \renewcommand{\qedsymbol}{\oldqed}
}
\let\subset\subseteq
\let\epsilon\varepsilon
\let\eps\varepsilon
\let\rho\varrho
\def\LpSpace{L}
\def\dist{\mathrm{dist}\:}
\def\SUPPORT{\mathrm{supp}\:}
\newcommand{\JUSTIFY}[1]{\fbox{\tiny{#1}}\quad}
\newcommand{\By}[2]{\overset{\mbox{\tiny{#1}}}{#2}}
\newcommand{\ByRef}[2]{   \By{\eqref{#1}}{#2} }
\newcommand{\geByRef}[1]{ \ByRef{#1}{\ge} }
\newcommand{\Prob}{\mathbf{P}} 
\newcommand{\bbR}{\mathbb{R}} 
\newcommand{\frakc}{\mathfrak{c}} 
\newcommand{\calE}{\mathcal{E}} 
\newcommand{\fraccoverpolytope}{\mathrm{FCOV}} 
\newcommand{\coverpolytope}{\mathrm{COV}}
\newcommand{\matchingpolytope}{\mathrm{MATCH}} 
\newcommand{\perfmatchingpolytope}{\mathrm{PerfMATCH}} 
\newcommand{\fracperfmatchingpolytope}{\mathrm{FPerfMATCH}} 
\newcommand{\fraccoverratio}{\mathsf{fcov}} 
\newcommand{\matchratio}{\mathsf{match}}
\begin{document}

\title{Matching polytons}
\author{Martin Dole\v zal}
\address{Institute of Mathematics of the Czech Academy of Sciences, \v Zitn\'a 25, Praha. The Institute of Mathematics is supported by RVO:67985840.} 
\email{dolezal@math.cas.cz}\email{honzahladky@gmail.com}
\author{Jan Hladk\'y}

\thanks{{\it Martin Dole\v zal} was supported by GA \v CR Grant 16-07378S. 
\\	
{\it Jan Hladk\'y:} The research leading to these results has received funding from the People Programme (Marie Curie Actions) of the European Union's Seventh Framework Programme (FP7/2007-2013) under REA grant agreement number 628974.\\
An extended abstract describing these results will appear in the proceedings of the EuroComb2017 conference,~\cite{DoHlHuPi:CombinatorialOptimization}.
}

\begin{abstract}
Hladk\'y, Hu, and Piguet [Tilings in graphons, preprint] introduced the notions of matching and fractional vertex covers in graphons. These are counterparts to the corresponding notions in finite graphs. 

Combinatorial optimization studies the structure of the matching polytope and the fractional vertex cover polytope of a graph. Here, in analogy, we initiate the study of the structure of the set of all matchings and of all fractional vertex covers in a graphon. We call these sets the matching polyton and the fractional vertex cover polyton.

We also study properties of matching polytons and fractional vertex cover polytons along convergent sequences of graphons.

As an auxiliary tool of independent interest, we prove that a graphon is $r$-partite if and only if it contains no graph of chromatic number $r+1$. This in turn gives a characterization of bipartite graphons as those having a symmetric spectrum.
\end{abstract}
\maketitle


\section{Introduction}
Theories of graph limits are arguably one of the most important directions in discrete mathematics in the last decade. They link graph theory to analytic parts of mathematics, and through this connection introduce new tools to graph theory. With regard to such applications, the most fruitful theory has been that of flag algebras, \cite{Razborov2007}. Here, we deal with the theory of graphons developed by Borgs, Chayes, Lov\'asz, Szegedy, S\'os, and Vesztergombi, \cite{Borgs2008c,Lovasz2006}. This theory, too, has found numerous applications in extremal graph theory (e.g.,~\cite{Lov:Sidorenko}), theory of random graphs \cite{ChatVar:LargeDev}, and in our understanding of properties of Szemer\'edi regularity partitions, see e.g.~\cite{Lovasz2010}.
Much of the theory is built on counterparts of concepts well-known from the world of finite graphs (such as subgraph densities or cuts). Ideally, such counterparts are continuous with respect to the cut-distance, and are equal to the original concept when the graphon in question is a representation of a finite graph.

Hladk\'y, Hu, and Piguet in~\cite{HlHuPi:TilingsInGraphons} translated the concept of vertex-disjoint copies of a fixed finite graph $F$ in a (large) host graph to graphons. Following preceding literature on this topic, they use the name \emph{$F$-tiling} (in a graph or in a graphon). This allows them to introduce the \emph{$F$-tiling ratio} of a graphon. They also translate the closely related concept of (fractional) $F$-covers in finite graphs to graphons which is a dual concept (in the sense of linear programming) to $F$-tilings. The case when $F$ is an edge, $F=K_2$, is the most important. Then $F$-tilings are exactly matchings, the $F$-tiling ratio is just the matching ratio\footnote{the \emph{matching ratio} is just the matching number divided by the number of vertices}, and (fractional) $F$-covers are exactly (fractional) vertex covers. 

In this paper we deal exclusively with the case $F=K_2$ and from now on we specialize our description to this case. We discuss the possible generalizations to other $F$-tilings of some of the results presented in this paper in  Section~\ref{sub:GenerelazingToFTilings}.

Hladk\'y, Hu, and Piguet
mostly study the numerical quantities provided by the theory they develop, that is, the matching ratio and the fractional vertex cover ratio. Given a graphon $W$, we denote these two quantities (which we define in Section~\ref{sec:matchings}) by $\matchratio(W)$ and $\fraccoverratio(W)$. One of their main results is a transference result about the matching ratio between finite graphs and graphons, as follows.\footnote{See Section~\ref{sec:notation} for the definition of cut-distance convergence.}
\begin{theorem}[Theorem 3.4 in~\cite{HlHuPi:TilingsInGraphons}]\label{thm:tilingtransference}
	Suppose that $(G_n)_n$ is a sequence of graphs of growing orders that converge to a graphon $W$ in the cut-distance. Then for every $\epsilon>0$ there exists $n_0$ so that for each $n>n_0$ the graph $G_n$ contains a matching of size at least $(\frac{\matchratio(W)}{2}-\epsilon)\cdot v(G_n)$.
\end{theorem}	
Another main result from~\cite{HlHuPi:TilingsInGraphons} is the counterpart of the prominent linear programming duality between the fractional matching number of a graph and its fractional vertex cover number. Since --- as Hladk\'y, Hu, and Piguet argue --- in the graphon world there is no distinction between matchings and fractional matchings, their LP duality has the form $\matchratio(W)=\fraccoverratio(W)$. In~\cite{HlHuPi:TilingsInGraphons} and \cite{HlHuPi:Komlos} they give applications of this LP duality in extremal graph theory.\footnote{These applications become particularly interesting when general $F$-tilings are considered.}

In this paper, on the other hand, we study the sets of all fractional matchings, and of all fractional vertex covers. In the case of a finite graph $G$, these sets are known as the \emph{fractional matching polytope} and the \emph{fractional vertex cover polytope}. We shall denote them by $\matchingpolytope(G)$ and $\fraccoverpolytope(G)$. Study of $\matchingpolytope(G)$ and $\fraccoverpolytope(G)$ (and study of related polytopes such as the (integral) matching polytope and the perfect matching polytope) is central in polyhedral combinatorics and in combinatorial optimization. From the numerous results on the geometry of these polytopes, let us mention integrality of the fractional matching polytope and fractional vertex cover polytope of a bipartite graph, or the Edmonds' perfect matching polytope theorem. Here, we initiate a parallel study in the context of graphons. While in the finite case we have $\matchingpolytope(G)\subset \mathbb{R}^{E(G)}$ and $\fraccoverpolytope(G)\subset\mathbb{R}^{V(G)}$, given a graphon $W:\Omega^2\rightarrow [0,1]$, for the corresponding objects $\matchingpolytope(W)$ and $\fraccoverpolytope(W)$ it turns out that we have $\matchingpolytope(W)\subseteq\LpSpace^1(\Omega^2)$ and $\fraccoverpolytope(W)\subseteq\LpSpace^\infty(\Omega)$. So, while $\matchingpolytope(G)$ and $\fraccoverpolytope(G)$ are studied using tools from linear algebra, in order to study $\matchingpolytope(W)$ and $\fraccoverpolytope(W)$ we need to use the language of functional analysis. We employ the \emph{-on} word ending used among others for graph\emph{on}s and for permut\emph{on}s and call the limit counterparts to polytopes (such as $\matchingpolytope(W)$ and $\fraccoverpolytope(W)$) polyt\emph{on}s. 

\subsection{Overview of the paper} In Section~\ref{sec:notation} we recall the necessary background concerning graphons and the theory of matchings/tilings in graphons developed in~\cite{HlHuPi:TilingsInGraphons}. As an auxiliary tool for our main results, we prove in Section~\ref{ssec:characterizationofrpartite} that a graphon is $r$-partite if and only if it has zero density of every graph of chromatic number $r+1$ (see Proposition~\ref{prop:morecolors}). This in turn gives a characterization of bipartite graphons as those having a symmetric spectrum (see Theorem~8), which is a graphon counterpart to a well-known property for finite graphs.

In Section~\ref{sec:extremepoints} we treat (half)-integrality of the extreme points of the fractional vertex cover polyton of a graphon. The main results of this section are summarized in Theorems~\ref{thm:CovIntegral}, \ref{thm:CovHalfIntegral} and \ref{thm:CovIntegralImpliesBipartite}. As an application, we deduce a graphon version of the Erd\H{o}s--Gallai theorem on matchings in dense graphs (see Theorem~\ref{thm:EGgraphon}). In Section~\ref{sec:matchingpolytonconvergent} we show that if a sequence of graphons $(W_n)_n$ converges to a graphon $W$ in the cut-distance then ``$\matchingpolytope(W_n)$  asymptotically contains $\matchingpolytope(W)$'' (see Corollary~\ref{cor:assymptcont}, or Theorem~\ref{thm:convergencematchings} for a slightly stronger statement). This result is dual in the sense of linear programming to results from~\cite{HlHuPi:TilingsInGraphons} on the relation between $\fraccoverpolytope(W_n)$ and $\fraccoverpolytope(W)$, which we recall in Section~\ref{sub:coverpolytonconvergent}.

Section~\ref{sec:concluding} contains some concluding remarks.

\section{Notation and preliminaries}\label{sec:notation}
\subsection{Functional analysis}
\subsubsection{Weak* convergence and the Banach--Alaoglu theorem}\label{ssec:WeakConv}
We now recall the concept of weak* convergence and the Banach--Alaoglu theorem (which we present in a form that is tailored for our purposes). Suppose that $(X,\lambda)$ is a Borel probability space. Recall that a sequence of measurable functions $f_1,f_2,\ldots:X\rightarrow [0,1]$ \emph{converges weak*} to a measurable function $f:X\rightarrow [0,1]$ if for every measurable set $A\subset X$ we have $\lim_n \int_A f_n =\int_A f$. The Banach--Alaoglu theorem then asserts that the space of measurable functions from $X$ to $[0,1]$ is sequentially compact.

Since the product of finitely many sequentially compact spaces is sequentially compact, the Banach--Alaoglu theorem generalizes as follows. Suppose that $k\in\mathbb{N}$ is fixed, and suppose that we are given sequences $f^1_1,f^1_2,f^1_3,\ldots$, $f^2_1,f^2_2,f^2_3,\ldots$, \ldots, $f^k_1,f^k_2,f^k_3,\ldots$ of measurable functions from $X$ to $[0,1]$. Then there exist a sequence of indices $n_1<n_2<\ldots$ and functions $f^1,f^2,\ldots,f^k:X\rightarrow[0,1]$ such that for each $i\in[k]$, the sequence $f^i_{n_1},f^i_{n_2},f^i_{n_3},\ldots$ converges weak* to $f^i$.

\subsubsection{The Krein--Milman Theorem}\label{sssec:KreinMilman}
In this section, we briefly recall the Krein--Milman theorem. This theorem will allow to talk about ``vertices'' in our counterparts to polytopes.

Suppose that $\mathcal L$ is a vector space, and suppose that $X\subset \mathcal L$ is a convex set. Recall that a point $x\in X$ is called an \emph{extreme point of $X$} if the only pair $x',x''\in X$ for which $x=\frac12(x'+x'')$ is the pair $x'=x$, $x''=x$.  We shall write $\calE(X)$ to denote the set of all extreme points of $X$.

When $\mathcal L$ is finite-dimensional and $X$ is a polytope in $\mathcal L$ then the extreme points of $X$ are exactly its vertices. The importance of the notion of extreme points comes from the Krein--Milman theorem which states that in a locally convex topological vector space, each compact convex set equals the closed convex hull of its extreme points. 

The notion of extreme points is not the only generalization of vertices of a polytope. Another basic notion from convex analysis is that of exposed points. (Note that its stronger variant, the notion of strongly exposed points, can be used for a characterization of the Radon--Nikodym property of Banach spaces which is an extensively studied topic.) A point $x$ in a convex set $X$ is \emph{exposed} if there exists a continuous linear functional for which $x$ attains its strict maximum on $X$.

\begin{figure}
	\includegraphics[scale=1]{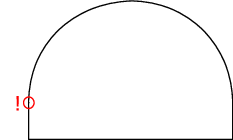} 
	\caption{By joining a half-circle and a rectangle in $\mathbb R^2$ we get an extreme point that is not exposed.}
	\label{fig:nonexposed}
\end{figure}
It is easy to see that every exposed point is extreme. The converse does not hold; a well-known counterexample in $\mathbb R^2$ is shown in Figure~\ref{fig:nonexposed}.

\subsection{Measure theory}
Suppose that $\lambda$ is a probability measure on a space $\Lambda$. Recall that for a measurable function $f:\Lambda\rightarrow \mathbb R$, its \emph{essential supremum} is defined as $$\mathrm{esssup}(f)=\inf\{a:\lambda(\{x:f(x)>a\})=0\}\;.$$
That is, the essential supremum is like the ordinary supremum, except that high values are not considered as long as they occur on a null-set. The \emph{essential infimum} $\mathrm{essinf}(f)$ is defined analogously.

Recall also that the product measure $\lambda^{\otimes 2}$ is defined by
$$\lambda^{\otimes 2}(R)=\inf\left\{\sum_{i=1}^\infty\lambda(A_i)\lambda(B_i)\colon A_i,B_i\subset\Lambda\text{ are measurable such that }R\subset\bigcup_{i=1}^\infty A_i\times B_i \right\}$$
for every $R\subset\Lambda^2$ from the product $\sigma$-algebra.

Below, we state an auxiliary result, which says that for every set $D\subset \Lambda^2$ we can find a rectangle (i.e., a set which is a product of two sets) which is almost entirely contained in $D$.
\begin{lemma}\label{lem:product_measure}
Let $(\Lambda,\lambda)$ be a probability space and let $A,B\subseteq\Lambda$ be given. Let $D\subseteq A\times B$ be a set of positive $\lambda^{\otimes 2}$ measure.
Then for every $\varepsilon>0$ there is a measurable rectangle $R\subseteq A\times B$ such that $\lambda^{\otimes 2}(R\setminus D)<\varepsilon\lambda^{\otimes 2}(R)$.
\end{lemma}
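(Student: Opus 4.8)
The plan is to approximate $D$ in measure by a finite union of pairwise disjoint measurable rectangles and then to pull out the required rectangle by an averaging (pigeonhole) argument; the only genuine subtlety is making sure the approximating rectangles lie inside $A\times B$ even if $A$ and $B$ are not assumed measurable. Throughout, $\mathcal A$ denotes the $\sigma$-algebra on $\Omega$ and $d:=\nu^{\otimes 2}(D)>0$.

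\emph{Reduction to measurable $A$ and $B$.} By inner regularity of the completion there is $D'\in\mathcal A\otimes\mathcal A$ with $D'\subseteq D$ and $\nu^{\otimes 2}(D')=d$. Each vertical section $D'_x:=\{y:(x,y)\in D'\}$ lies in $\mathcal A$, and if $D'_x\neq\emptyset$ then, since $D'\subseteq D\subseteq A\times B$, necessarily $x\in A$ and $D'_x\subseteq B$; hence $\tilde A:=\{x:\nu(D'_x)>0\}$ is a measurable subset of $A$, and, arguing symmetrically with the sections over the second coordinate, one gets a measurable $\tilde B\subseteq B$. The relevant section functions vanish off $\tilde A$, resp.\ off $\tilde B$, so Tonelli's theorem yields $\nu^{\otimes 2}\bigl(D'\setminus(\tilde A\times\tilde B)\bigr)=0$ and therefore $\nu^{\otimes 2}\bigl(D\cap(\tilde A\times\tilde B)\bigr)\ge d>0$. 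As any measurable rectangle $R\subseteq\tilde A\times\tilde B$ satisfies $R\setminus D=R\setminus\bigl(D\cap(\tilde A\times\tilde B)\bigr)$, it is enough to prove the lemma with $A,B,D$ replaced by $\tilde A,\tilde B,D\cap(\tilde A\times\tilde B)$. So from now on I assume $A$ and $B$ measurable.

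\emph{Approximation and averaging.} Fix $\varepsilon>0$ and set $\delta:=\varepsilon d/(2+2\varepsilon)$. The finite unions of pairwise disjoint measurable rectangles form an algebra generating $\mathcal A\otimes\mathcal A$, and $\nu^{\otimes 2}$ is finite, so (after first replacing $D$ by a set of $\mathcal A\otimes\mathcal A$ differing from it by a $\nu^{\otimes 2}$-null set) there are pairwise disjoint measurable rectangles $S_1\times T_1,\dots,S_n\times T_n$ whose union $E$ satisfies $\nu^{\otimes 2}(D\triangle E)<\delta$, where $\triangle$ is symmetric difference. Put $R_i:=(S_i\cap A)\times(T_i\cap B)$ and $E':=\bigcup_{i=1}^n R_i$, so that the $R_i$ are pairwise disjoint measurable rectangles contained in $A\times B$. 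If $(x,y)\in D\cap E$, then $x\in A$ and $y\in B$ since $D\subseteq A\times B$, and $(x,y)\in S_i\times T_i$ for some $i$, whence $(x,y)\in R_i$; thus $D\cap E\subseteq E'\subseteq E$, so $D\triangle E'\subseteq D\triangle E$ and $\nu^{\otimes 2}(D\triangle E')<\delta$. By disjointness $\sum_{i=1}^n\nu^{\otimes 2}(R_i\setminus D)=\nu^{\otimes 2}(E'\setminus D)<\delta$, while $\sum_{i=1}^n\nu^{\otimes 2}(R_i)=\nu^{\otimes 2}(E')\ge d-\nu^{\otimes 2}(D\setminus E')>d-\delta$. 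If $\nu^{\otimes 2}(R_i\setminus D)\ge\varepsilon\,\nu^{\otimes 2}(R_i)$ held for all $i$, summation would give $\delta>\varepsilon(d-\delta)$, i.e.\ $\varepsilon d<\delta(1+\varepsilon)=\varepsilon d/2$, which is impossible; hence $\nu^{\otimes 2}(R_i\setminus D)<\varepsilon\,\nu^{\otimes 2}(R_i)$ for some $i$. For that $i$ one has $\nu^{\otimes 2}(R_i)>0$, and $R:=R_i$ is the desired rectangle.

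I expect the reduction step to be the only real obstacle: once $A$ and $B$ are known measurable, intersecting the approximating rectangles with $A\times B$ and performing the pigeonhole count is entirely routine, and the choice of $\delta$ is forced by the closing chain of inequalities. (If one is content to assume from the outset that $A$ and $B$ are measurable --- which appears to be the intended reading --- the reduction step can be dropped altogether.)
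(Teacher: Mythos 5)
Your proof is correct and follows essentially the same strategy as the paper's: approximate $D$ by a finite union of pairwise disjoint measurable rectangles inside $A\times B$, then extract the desired rectangle by the pigeonhole/averaging argument. The only real difference is cosmetic — the paper obtains its rectangles by covering $D$ (via the outer-measure definition of $\nu^{\otimes 2}$), truncating to a finite subcover, and then disjointifying, whereas you approximate $D$ in symmetric difference directly by the algebra of disjoint rectangles and intersect with $A\times B$; both lead to the same pigeonhole count with a comparable choice of error parameter. Your preliminary reduction to measurable $A$ and $B$ is a sensible extra precaution that the paper skips (it tacitly reads $A,B$ as measurable, which is the intended usage), and your closing observation that the extracted rectangle automatically has positive measure is a small point the paper leaves implicit but is worth having spelled out.
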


\begin{proof}
Let us fix $\varepsilon>0$. By the definition of the product measure, we can find measurable rectangles $R_1,R_2,\ldots\subseteq A\times B$ such that $D\subseteq\bigcup\limits_{i=1}^{\infty}R_i$ and
\begin{equation}\nonumber
\lambda^{\otimes 2}\left(\bigcup\limits_{i=1}^{\infty}R_i\setminus D\right)<\varepsilon\lambda^{\otimes 2}(D)\le
\varepsilon\lambda^{\otimes 2}\left(\bigcup\limits_{i=1}^{\infty}R_i\right)\;.
\end{equation}
Then there is a natural number $m$ such that
\begin{equation}\label{eq:nerovnost_pro_R_i}
\lambda^{\otimes 2}\left(\bigcup\limits_{i=1}^mR_i\setminus D\right)<\varepsilon\lambda^{\otimes 2}\left(\bigcup\limits_{i=1}^mR_i\right)\;.
\end{equation}
Now the finite union $\bigcup\limits_{i=1}^mR_i$ can obviously be decomposed into finitely many \emph{pairwise disjoint} measurable rectangles $S_1,\ldots,S_l$.
Then the inequality (\ref{eq:nerovnost_pro_R_i}) can be rewritten as
\begin{equation}\nonumber
\sum\limits_{i=1}^l\lambda^{\otimes 2}(S_i\setminus D)<\varepsilon\sum\limits_{i=1}^l\lambda^{\otimes 2}(S_i)\;.
\end{equation}
Thus there is some $i\in\{1,\ldots,l\}$ such that $\lambda^{\otimes 2}(S_i\setminus D)<\varepsilon\lambda^{\otimes 2}(S_i)$. The corresponding $S_i$ is the wanted measurable rectangle $R$.
\end{proof}

\subsection{Graphon basics}
Our notation follows~\cite{Lovasz2012}. Throughout the paper we
shall assume that $\Omega$ is an atomless Borel probability space
equipped with a measure $\nu$ (defined on an implicit $\sigma$-algebra). We denote by $\nu^{\otimes k}$ the product measure on $\Omega^k$.

A \emph{graphon} is a symmetric Lebesgue measurable function $W:\Omega^2\rightarrow[0,1]$. We refer to~\cite[Part 3]{Lovasz2012} for a detailed treatment of the concept. On an intuitive level, a graphon can be viewed as an adjacency matrix (scaled down to fit into $\Omega^2$) of a large graph, where the values of the graphon represent frequencies of 1's in the adjacency matrix. 

Suppose that $F$ is a graph on the vertex set $[k]$. Then the \emph{density} of $F$ in a graphon $W$ is defined as
$$t(F,W)=\int_{x_1}\int_{x_2}\cdots\int_{x_k}\;\prod_{\substack{ij\in E(F)\\ i<j}}W(x_i,x_j)\;.$$

Recall that the \emph{cut-norm} $\|\cdot\|_\square$ and the \emph{cut-distance} $\dist_\square(\cdot,\cdot)$ are defined by
\begin{align*}
\|U\|_\square&=\sup_{S,T\subset \Omega}\left|\int_{S\times T} U\right|\;,\;U\in\LpSpace^1(\Omega^2)\;,
\end{align*}
where the infimum ranges over all measurable sets $S,T\subset \Omega$, and
\begin{align*}
\dist_\square(U,W)&=\inf_\phi \|U-W^\phi\|_\square\;,\;U,W\in\LpSpace^1(\Omega^2)\;,
\end{align*}
where the infimum ranges over all measure-preserving bijections on $\Omega$, and $W^\phi$ is defined by $W^\phi(x,y)=W(\phi(x),\phi(y))$.

Given a finite graph $G$ we can construct its \emph{graphon representation} $W_G:\Omega^2\rightarrow[0,1]$ as follows. We partition $\Omega$ arbitrarily into sets $\{\Omega_v\}_{v\in V(G)}$ of measure $1/v(G)$ each. We then define $W_G$ to be $1$ on $\Omega_u\times \Omega_v$ if $uv\in E(G)$ and~$0$ otherwise. The actual graphon representation $W_G$ depends on the choice of the partition $\{\Omega_v\}_{v\in V(G)}$, but the cut-distance $\dist_\square(W_G,\cdot)$ does not.

\subsubsection{Independent sets and partite graphons}
A set $A\subset \Omega$ is an \emph{independent set} in a graphon $W:\Omega^2\rightarrow [0,1]$ if $W$ is zero almost everywhere on $A\times A$. In~\cite[Lemma~2.3]{HlHuPi:Komlos} it was proven that the support of any weak$^*$ accumulation point of a sequence of (indicator functions of) independent sets is an independent set. Here, we extend the statement by allowing the sets in the sequence to be only ``asymptotically independent''. The proof is however similar to that in~\cite{HlHuPi:Komlos}.
\begin{lemma}
	\label{lem:limitofalmostindependent}Let $W:\Omega^{2}\rightarrow[0,1]$
	be a graphon. Suppose that $\left(A_{n}\right)_{n=1}^{\infty}$ is
	a sequence of sets in $\Omega$ with the property that $$\int_{(x,y)\in A_n\times A_n}W(x,y) \overset{n\rightarrow \infty}{\longrightarrow} 0\;.$$ Suppose that the indicator
	functions of the sets $A_{n}$ converge weak$^{*}$ to a function
	$f:\Omega\rightarrow[0,1]$. Then $\SUPPORT f$ is an independent
	set in $W$.
\end{lemma}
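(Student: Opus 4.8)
The plan is to reduce the lemma to the single identity
$$\int_{\Omega^2}W(x,y)\,f(x)f(y)\,\mathrm d x\,\mathrm d y=0.$$
Indeed, once this is known, the integrand is a nonnegative function (since $W\ge 0$ and $f\ge 0$ a.e.) with zero integral, hence $W(x,y)f(x)f(y)=0$ for $\nu^{\otimes 2}$-a.e.\ $(x,y)$; on $\{f>0\}\times\{f>0\}$ the factor $f(x)f(y)$ is strictly positive, so $W=0$ a.e.\ there. As $\SUPPORT f=\{f>0\}$ up to a null set, this says precisely that $\SUPPORT f$ is independent in $W$. So everything comes down to the limit transition
$$\int_{(x,y)\in A_n\times A_n}W(x,y)\ \xrightarrow{\ n\to\infty\ }\ \int_{\Omega^2}W(x,y)\,f(x)f(y)\,\mathrm d x\,\mathrm d y,$$
after which the left-hand side tends to $0$ by hypothesis and we are done. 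The naive approach --- pairing $\mathbbm 1_{A_n}$ against $W(x,\cdot)$ in the second coordinate and then against the result in the first --- does not work directly: $\mathbbm 1_{A_n}\to f$ only in the weak$^*$ topology of $\LpSpace^\infty(\Omega)$, and weak$^*$ convergence does not pass to products, whereas $\mathbbm 1_{A_n\times A_n}$ features the same sequence in both coordinates. Circumventing this is the one genuine obstacle.

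I would handle it by a finite approximation of $W$, along the lines of \cite[Lemma~2.3]{HlHuPi:Komlos}. Fix $\eps>0$. Since finite unions of measurable rectangles generate the product $\sigma$-algebra, one can choose a finite partition $\Omega=\Omega_1\sqcup\dots\sqcup\Omega_m$ and nonnegative reals $c_{ij}$ so that the step function $W'=\sum_{i,j=1}^m c_{ij}\mathbbm 1_{\Omega_i\times\Omega_j}$ satisfies $\|W-W'\|_{\LpSpace^1(\Omega^2)}<\eps$. For $W'$ the transition is immediate: $\int_{A_n\times A_n}W'=\sum_{i,j}c_{ij}\,\nu(A_n\cap\Omega_i)\,\nu(A_n\cap\Omega_j)$, and $\nu(A_n\cap\Omega_i)=\int_\Omega\mathbbm 1_{A_n}\mathbbm 1_{\Omega_i}\to\int_{\Omega_i}f$ for each of the finitely many $i$ by weak$^*$ convergence, so $\int_{A_n\times A_n}W'\to\sum_{i,j}c_{ij}\big(\int_{\Omega_i}f\big)\big(\int_{\Omega_j}f\big)=\int_{\Omega^2}W'(x,y)f(x)f(y)$. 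Because $0\le\mathbbm 1_{A_n\times A_n}\le 1$ and $0\le f(x)f(y)\le 1$, swapping $W'$ for $W$ on either side changes the value by at most $\|W-W'\|_{\LpSpace^1(\Omega^2)}<\eps$; hence $\limsup_n\big|\int_{A_n\times A_n}W-\int_{\Omega^2}W(x,y)f(x)f(y)\big|\le 2\eps$, and letting $\eps\to 0$ yields the claimed limit.

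An alternative, slicker route avoids the explicit approximation: the integral operator $T_W\colon\LpSpace^2(\Omega)\to\LpSpace^2(\Omega)$ with kernel $W$ is Hilbert--Schmidt, hence compact; on the probability space $\Omega$, weak$^*$ convergence in $\LpSpace^\infty(\Omega)$ forces weak convergence $\mathbbm 1_{A_n}\rightharpoonup f$ in $\LpSpace^2(\Omega)$, so $T_W\mathbbm 1_{A_n}\to T_Wf$ in $\LpSpace^2$-norm, and therefore $\int_{A_n\times A_n}W=\langle\mathbbm 1_{A_n},T_W\mathbbm 1_{A_n}\rangle\to\langle f,T_Wf\rangle=\int_{\Omega^2}W(x,y)f(x)f(y)$. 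Either way the proof concludes via the reduction of the first paragraph. The only point that requires care is the one flagged above (products of weak$^*$-convergent sequences), and both routes neutralise it --- the first by brute finite-dimensional approximation, the second by compactness of $T_W$.
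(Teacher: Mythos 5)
Your proof is correct, but it takes a genuinely different route from the paper's. The paper argues by contradiction and works with superlevel sets: it reduces to showing each $P=\{f>\delta\}$ is independent, and if not, uses the rectangle lemma (Lemma~\ref{lem:product_measure}) to find $X,Y\subset P$ on most of which $W>\alpha$; weak$^*$ convergence then gives $\nu(X\cap A_n)\ge\tfrac12\delta\nu(X)$ and $\nu(Y\cap A_n)\ge\tfrac12\delta\nu(Y)$ for large $n$, producing a lower bound $\int_{(X\cap A_n)\times(Y\cap A_n)}W\gtrsim\delta^2\alpha\nu(X)\nu(Y)>0$ that contradicts the hypothesis. You instead prove the stronger, cleaner identity $\lim_n\int_{A_n\times A_n}W=\int_{\Omega^2}W(x,y)f(x)f(y)$ and read off independence of $\SUPPORT f=\{f>0\}$ from nonnegativity of the integrand. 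Both arguments must get around the same obstacle — that weak$^*$ convergence does not pass to the product $\mathbbm 1_{A_n}\otimes\mathbbm 1_{A_n}$ — and they do so differently: the paper localizes to a rectangle where $W$ is bounded below and bounds each factor's mass separately, while you make the bilinear form factor through finitely many linear functionals by approximating the kernel in $\LpSpace^1$ by a step function (or, equivalently, by invoking compactness of the Hilbert--Schmidt operator $T_W$, which is also valid since weak$^*$ convergence in $\LpSpace^\infty$ implies weak convergence in $\LpSpace^2$ on a probability space). The paper's route is more elementary and self-contained given its Lemma~\ref{lem:product_measure}; yours yields a quantitative continuity statement for the energy form $\int W\,g\otimes h$ under weak$^*$ convergence that is of independent interest and gives the conclusion for the full support in one stroke rather than level set by level set.
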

\begin{proof}
	It is enough to prove that for every $\delta>0$, the set $P=\{x\in\Omega:f(x)>\delta\}$
	is independent. Suppose that the statement is false. Then there exist
	sets $X,Y\subset P$ of positive measure and a number $\alpha>0$
	such that
	\begin{equation}
	\nu\left(X\times Y\cap\left\{ (x,y)\in\Omega^{2}:W(x,y)\le\alpha\right\} \right)<\frac{\delta^{2}}{5}\nu(X)\nu(Y)\;.\label{eq:vlak}
	\end{equation}
	Recall that $\int_{X}f\ge\delta\nu(X)$ and $\int_{Y}f\ge\delta\nu(Y)$.
	By weak$^{*}$ convergence, for $n$ sufficiently large, $\nu(X\cap A_{n})\ge\frac{1}{2}\delta\nu(X)$
	and $\nu(Y\cap A_{n})\ge\frac{1}{2}\delta\nu(Y)$. In particular,
	(\ref{eq:vlak}) gives that
	\begin{equation}
	\int_{X\cap A_{n}}\int_{Y\cap A_{n}}W(x,y)\ge\left(\frac{1}{4}\delta^{2}-\frac{1}{5}\delta^{2}\right)\alpha\nu(X)\nu(Y)=\frac{1}{20}\delta^{2}\alpha\nu(X)\nu(Y)\;,\label{eq:c1}
	\end{equation}
	for each large enough $n$. On the other hand,
	\begin{equation}
	\int_{X\cap A_{n}}\int_{Y\cap A_{n}}W(x,y)\le\int_{A_{n}}\int_{A_{n}}W(x,y)\overset{n\rightarrow \infty}{\longrightarrow}0\;,\label{eq:c2}
	\end{equation}
	by the assumptions of the lemma. Clearly,~(\ref{eq:c1})
	contradicts (\ref{eq:c2}).
\end{proof}

Given a cardinal $r$, let us recall that a graphon $W:\Omega^2\rightarrow [0,1]$ is \emph{$r$-partite} if there exists a partition $\Omega=\bigsqcup_{i\in I}\Omega_i$, where for the index set $I$ we have $|I|=r$, into measurable sets such that for each $i\in I$, we have that $\Omega_i$ is an independent set in $W$. While most of the time we are interested in $r$ finite, note that this definition allows to define even \emph{countably-partite} graphons.

\subsubsection{Inhomogeneous random graphs} Let us recall that given a graphon $W:\Omega^2\rightarrow [0,1]$ and a number $n\in \mathbb{N}$, the \emph{$W$-random graph} is defined on the vertex set $\{1,\ldots,n\}$ as follows. First, draw independent random points $x_1,\ldots,x_n$ from $\Omega$ according to the measure $\nu$, and then for each $ij\in{n\choose 2}$, put an edge into the graph with probability $W(x_i,x_j)$ (and independently). For more details, see~\cite[Section 10.1]{Lovasz2012}.

\subsection{Characterization of partite graphons using forbidden subgraphs}\label{ssec:characterizationofrpartite}
In this section, we prove that for a given $r\in\mathbb N$, a graphon is $r$-partite if and only if it has zero density of each finite graph of chromatic number at least $r+1$. This is first proven for $r=2$ in Lemma~\ref{lemma:bipartite} (in which case the characterization of bipartiteness can be phrased in terms of odd cycles, just as in the world of finite graphs) and then for general $r$ in Proposition~\ref{prop:morecolors}. A strengthening of Lemma~\ref{lemma:bipartite} which we give in Lemma~\ref{lem:oddcycles} is needed for proving Theorem~\ref{thm:CovIntegralImpliesBipartite}. These results may be of independent interest.\footnote{In subsequent work~\cite{HlaRoch:IndepCliCol}, Hladk\'y and Rocha used this lemma to obtain properties of the ``independent set polyton''.} Further, in Section~\ref{sssection:spectrabipartite} we give a characterization of bipartite graphons in terms of their spectrum. 

\begin{lemma}\label{lemma:bipartite}
	Suppose that $W:\Omega^2\rightarrow [0,1]$ is a graphon. Then $W$ is bipartite if and only if for every odd integer $k\ge 3$ it holds $t(C_k,W)=0$.
\end{lemma}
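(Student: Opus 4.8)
The forward direction is immediate: if $W$ is bipartite with parts $\Omega_A, \Omega_B$, then for any odd $k$, every closed walk of length $k$ on the vertices $x_1,\dots,x_k$ must use an edge inside $\Omega_A$ or inside $\Omega_B$ (a proper $2$-coloring of $C_k$ is impossible for odd $k$), so the integrand $\prod_{i} W(x_i,x_{i+1})$ (indices mod $k$) vanishes for a.e. tuple, giving $t(C_k,W)=0$. Let me think about how to organize the hard direction.

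For the converse, the plan is to assume $t(C_k,W)=0$ for all odd $k\ge 3$ and construct a bipartition. First I would pass to the support: let $G = \SUPPORT W$, viewed as a symmetric measurable subset of $\Omega^2$, and note that $t(C_k,W)=0$ forces $\nu^{\otimes k}$ of the set of $k$-cycles in $G$ to be zero for all odd $k$. The natural approach is to define an equivalence-type relation capturing ``joined by an even-length path'' and show it splits $\Omega$ into two classes on which $W$ vanishes. Concretely, for $x,y\in\Omega$ say $x\sim y$ if there is an even $\ell$ and a ``walk'' witnessed on a positive-measure set of intermediate tuples — but to make this measurable and well-behaved I would instead argue by contradiction using Lemma~\ref{lem:product_measure}: if $W$ is \emph{not} bipartite, I want to produce an odd closed walk of positive measure. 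The cleanest route: suppose no bipartition works; build a maximal ``almost-bipartition'' greedily, or use a compactness/exhaustion argument to get a set $A$ such that $W$ is (essentially) zero on $A\times A$ and on $A^c\times A^c$ fails — then on $A^c\times A^c$ there is a positive-measure chunk $D$ of $\SUPPORT W$, apply Lemma~\ref{lem:product_measure} to get a rectangle $R=R_1\times R_2\subseteq A^c\times A^c$ mostly inside $\SUPPORT W$, and similarly use that points of $A^c$ are reachable from $A$ by edges to stitch together an odd closed walk through $R_1, R_2$ and back, each ``hop'' supported on a positive-measure rectangle by repeated application of Lemma~\ref{lem:product_measure}. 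Multiplying the measures of these rectangles gives $t(C_k,W)>0$ for the resulting odd $k$, a contradiction.

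The main obstacle I anticipate is making the ``odd closed walk of positive measure'' argument rigorous: one must track a finite collection of positive-measure rectangles whose consecutive coordinates are linked by edges of $W$ with positive density (so that the product integral defining $t(C_k,W)$ is strictly positive), and one must ensure the walk closes up with \emph{odd} total length. This requires a careful notion of the bipartite structure — essentially a measurable analogue of ``the non-bipartite part of a graph contains an odd cycle'' — and I expect the proof will isolate a ``connectedness modulo $W$'' reduction (restricting to a positive-measure piece on which one can move between any two sub-rectangles) so that parity of walks is well-defined there. A convenient way to handle this is to first prove the statement for the ``$\varepsilon$-reduced'' graphon where low-density rectangles are deleted, then let $\varepsilon\to 0$; alternatively one may invoke the known characterization that a graphon is bipartite iff it has no odd closed walk of positive measure and reduce Lemma~\ref{lemma:bipartite} to that. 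I would also double-check the edge cases where $\SUPPORT W$ has full measure versus where $W$ is zero on a large set, since the bipartition must have \emph{both} parts of positive measure by definition.
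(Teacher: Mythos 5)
Your forward direction is fine and matches the paper's (the paper phrases it contrapositively, but the content --- an odd cycle cannot be properly $2$-colored, so some edge of the cycle lands inside a part --- is the same). The converse is where the substance lies, and there your proposal has a genuine gap: you never actually construct the bipartition or the odd closed walk. You correctly identify the crux (``a measurable analogue of: the non-bipartite part contains an odd cycle'', i.e.\ a parity-of-walks decomposition of each ``component''), but the text only gestures at it (``build a maximal almost-bipartition greedily, or use a compactness/exhaustion argument''), and your fallback --- ``invoke the known characterization that a graphon is bipartite iff it has no odd closed walk of positive measure'' --- is circular, since that characterization is essentially the lemma being proved. Note also that the paper's converse does not go through Lemma~\ref{lem:product_measure} or rectangle-stitching at all; the chaining of positive-measure ``hops'' into a closed odd walk that you describe is exactly the step that needs the structure you have not built.

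What the paper actually does, and what your sketch is missing, is the following. Working directly (not by contradiction), it runs a transfinite induction producing nested pairs $(A_\alpha,B_\alpha)$ exhausting $\Omega$ in measure. At a successor step it picks a point $x_0$ in the uncovered part with a positive-measure neighbourhood, and (after discarding a null set via Fubini) assumes every odd closed walk through $x_0$ has zero density. It then defines $D_\ell$ as the set of points $y$ reachable from $x_0$ by walks of length $\ell+1$ with positive walk-density, puts $A=\bigcup_{\ell\text{ even}}D_\ell$ and $B=\bigcup_{\ell\text{ odd}}D_\ell$, and checks via Fubini that $W$ vanishes a.e.\ on $A^2$, on $B^2$, and between $A\cup B$ and its complement in the remainder, and that $\nu(A\cap B)=0$ --- each failure would produce an odd closed walk through $x_0$ of positive density, contradicting $t(C_k,W)=0$ for $k=\ell_1+\ell_2+3$ or $\ell_1+\ell_2+2$. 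The transfinite exhaustion is needed because each step only removes one ``component'' of positive measure. To complete your proof you would have to supply precisely this $D_\ell$ construction (or an equivalent parity argument) together with the exhaustion; as written, the converse direction is a statement of intent rather than a proof.
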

\begin{proof}[Proof of Lemma~\ref{lemma:bipartite}]
	Suppose first that there is an odd integer $k\ge 3$ such that $t(C_k,W)>0$. Let $\Omega=\Omega_0\sqcup \Omega_1$ be an arbitrary decomposition of $\Omega$ into two disjoint measurable subsets. Then there exists $(i_j)_{j=1}^k\in\{0,1\}^k$ such that
	\begin{equation}\nonumber
	\int\limits_{x_1\in\Omega_{i_1}}\int\limits_{x_2\in\Omega_{i_2}}\cdots\int\limits_{x_k\in\Omega_{i_k}}W(x_1,x_2)W(x_2,x_3)\ldots W(x_k,x_1)>0\;.
	\end{equation}
	As $k$ is odd, there is $j\in\{1,\ldots,k\}$ such that $i_j=i_{j+1}$ (here we use the cyclic indexing, i.e. $k+1=1$). By Fubini's theorem
	\begin{equation}\nonumber
	\int\limits_{x_j\in\Omega_{i_j}}\int\limits_{x_{j+1}\in\Omega_{i_{j+1}}}W(x_j,x_{j+1})>0\;,
	\end{equation}
	in other words $\int_{\Omega_i^2}W>0$ for some $i\in\{0,1\}$. As the decomposition $\Omega=\Omega_0\sqcup \Omega_1$ was chosen arbitrarily, this proves that $W$ is not bipartite.
	
	Now suppose that $t(C_k,W)=0$ for every odd integer $k\ge 3$.
	By transfinite induction we define a transfinite sequence $\{(A_{\alpha},B_{\alpha})\colon\alpha\le\gamma\}$ (for some countable ordinal $\gamma$) consisting of pairs of measurable subsets of $\Omega$ such that $\nu(A_{\gamma}\cup B_{\gamma})=1$, and such that for every $\alpha\le\gamma$ the following conditions hold:
	\begin{itemize}
		\item[(i)$_\alpha$] $A_{\beta}\subseteq A_{\alpha}$ and $B_{\beta}\subseteq B_{\alpha}$\; for every $\beta\le\alpha$,
		\item[(ii)$_\alpha$] $\nu(A_{\beta}\cup B_{\beta})<\nu(A_{\alpha}\cup B_{\alpha})$\; for every $\beta<\alpha$,
		\item[(iii)$_\alpha$] $W\restriction_{A_{\alpha}^2}=0$ a.e. and $W\restriction_{B_{\alpha}^2}=0$ a.e.,
		\item[(iv)$_\alpha$] $W\restriction_{(A_{\alpha}\cup B_{\alpha})\times(\Omega\setminus(A_{\alpha}\cup B_{\alpha}))}=0$ a.e.,
		\item[(v)$_\alpha$] $\nu(A_{\alpha}\cap B_{\alpha})=0$.
	\end{itemize}
	Once we are done with the construction, the bipartiteness of $W$ immediately follows by the equation $\nu(A_{\gamma}\cup B_{\gamma})=1$ together with (iii)$_\gamma$ and (v)$_\gamma$.
	
	We start the construction by setting $A_0=B_0=\emptyset$, then the conditions (i)$_\emptyset$-(v)$_\emptyset$ hold trivially. Now suppose that we have already constructed $\{(A_{\alpha},B_{\alpha})\colon\alpha<\alpha_0\}$ for some countable ordinal $\alpha_0$ such that the conditions (i)$_\alpha$-(v)$_\alpha$ hold for every $\alpha<\alpha_0$.
	If $\alpha_0$ is a limit ordinal then we set $A_{\alpha_0}=\bigcup_{\alpha<\alpha_0}A_{\alpha}$ and $B_{\alpha_0}=\bigcup_{\alpha<\alpha_0}B_{\alpha}$, then (i)$_{\alpha_0}$-(v)$_{\alpha_0}$  clearly hold. Otherwise, $\alpha_0=\alpha+1$ for some ordinal $\alpha<\alpha_0$.
	If $\nu(A_\alpha\cup B_{\alpha})=1$ then the construction is finished (with $\gamma:=\alpha$).
	So suppose that $\nu(A_\alpha\cup B_{\alpha})<1$ and denote $\Omega'_\alpha:=\Omega\setminus(A_{\alpha}\cup B_{\alpha})$.
	If $W\restriction_{\Omega'_\alpha\times\Omega'_\alpha}=0$ a.e. then it suffices (by (iv)$_\alpha$) to set $A_{\alpha_0}=A_{\alpha}\cup\Omega'_\alpha$ and $B_{\alpha_0}=B_{\alpha}$. Otherwise there is $x_0\in\Omega'_\alpha$ such that $\nu\left(\{y\in\Omega'_\alpha\colon W(x_0,y)>0\}\right)>0$. By Fubini's theorem, we may also assume that for every odd integer $k\ge 3$ we have that
	\begin{equation}\label{LabledEvenCycle}
	\int\limits_{\Omega^{k-1}}W(x_0,x_1)W(x_1,x_2)\ldots W(x_{k-1},x_0)=0\;.
	\end{equation}
	We set
	\begin{align*}
	D_0&=\{y\in\Omega'_\alpha\colon W(x_0,y)>0\}
	\quad\mbox{, and}\\
	D_{\ell}&=\left\{y\in\Omega'_\alpha\colon\int\limits_{\Omega^{\ell}}W(x_0,x_1)W(x_1,x_2)\ldots W(x_{\ell},y)>0\right\},\;\mbox{for $\ell\ge 1$}\;.
	\end{align*}
	Then we set 
	$$E_\alpha=\bigcup\limits_{\ell\text{ even}}D_{\ell}\quad \mbox{and}
	\quad
	F_\alpha=\bigcup\limits_{\ell\text{ odd}}D_{\ell}\;.$$
	Finally, we define $A_{\alpha_0}=A_{\alpha}\cup E_\alpha$ and $B_{\alpha_0}=B_{\alpha}\cup F_\alpha$. The conditions (i)$_{\alpha_0}$ and (ii)$_{\alpha_0}$ are clearly satisfied, so let us verify only (iii)$_{\alpha_0}$-(v)$_{\alpha_0}$.
	
	As for~(iii)$_{\alpha_0}$, suppose for a contradiction that $W\restriction_{A_{\alpha_0}^2}$ is positive on a set of positive measure.
	By the induction hypothesis, namely by (iii)$_\alpha$ and (iv)$_\alpha$, we easily conclude that $W\restriction_{E_\alpha^2}$ is also positive on a set of positive measure.
	So there are even integers $\ell_1,\ell_2\ge 0$ such that $W\restriction_{D_{\ell_1}\times D_{\ell_2}}$ is positive on a set of positive measure. But then a simple application of Fubini's theorem leads to a contradiction with~(\ref{LabledEvenCycle}) for $k=\ell_1+\ell_2+3$. So we have $W\restriction_{A_{\alpha_0}^2}=0$ a.e. Similarly, we get $W\restriction_{B_{\alpha_0}^2}=0$ a.e. This proves (iii)$_{\alpha_0}$.
	
	As for (iv)$_{\alpha_0}$, suppose for a contradiction that $W\restriction_{(A_{\alpha_0}\cup B_{\alpha_0})\times(\Omega\setminus(A_{\alpha_0}\cup B_{\alpha_0}))}$ is positive on a set of positive measure.
	By the induction hypothesis, namely by (iv)$_\alpha$, $W\restriction_{(E_\alpha\cup F_\alpha)\times(\Omega\setminus(A_{\alpha_0}\cup B_{\alpha_0}))}=W\restriction_{(E_\alpha\cup F_\alpha)\times(\Omega'_\alpha\setminus(E_\alpha\cup F_\alpha))}$ is also positive on a set of positive measure. By Fubini's theorem, there is $z\in\Omega'_\alpha\setminus(E_\alpha\cup F_\alpha)$ such that
	\begin{equation}\nonumber
	\nu\left(\{y\in E_\alpha\cup F_\alpha\colon W(y,z)>0\}\right)>0\;.
	\end{equation}
	So there is an integer $\ell\ge 0$ such that
	\begin{equation}\nonumber
	\nu\left(\{y\in D_{\ell}\colon W(y,z)>0\}\right)>0\;.
	\end{equation}
	By Fubini's theorem, we easily conclude that $z\in D_{\ell+1}$. But this contradicts the fact that $z\notin E_\alpha\cup F_\alpha$.
	
	As for (v)${_{\alpha_0}}$, suppose for a contradiction that $\nu(A_{\alpha_0}\cap B_{\alpha_0})>0$. By the induction hypothesis, namely by (v)$_\alpha$, we also have $\nu(E_\alpha\cap F_\alpha)>0$. So there are an even integer $\ell_1$ and an odd integer $\ell_2$ such that $\nu(D_{\ell_1}\cap D_{\ell_2})>0$. But then an application of Fubini's theorem leads to a contradiction with~(\ref{LabledEvenCycle}) for $k=\ell_1+\ell_2+2$.
	
	To finish the proof, it suffices to observe that for some countable ordinal $\gamma$ we get $\nu(A_{\gamma}\cup B_{\gamma})=1$ (and at that point, the construction stops by the description above). So suppose for a contradiction that this is not the case. Then, by conditions (i)$_\alpha$ and (ii)$_\alpha$, we can find uncountably many pairwise disjoint subsets of $\Omega$ of positive measure, namely $(A_{\alpha+1}\cup B_{\alpha+1})\setminus(A_\alpha\cup B_\alpha)$ (where $\alpha$ is not bounded by any countable ordinal $\gamma$). But this is obviously not possible.
\end{proof}

We can now state Proposition~\ref{prop:morecolors} which generalizes Lemma~\ref{lemma:bipartite} to higher chromatic number. Its proof stems from discussions with Andr\'as M\'ath\'e.

\begin{proposition}\label{prop:morecolors}
	Suppose that $k\in\mathbb N$ and $W:\Omega^2\rightarrow [0,1]$ is a graphon for which $t(H,W)=0$ for each graph of chromatic number $k+1$. Then $W$ is $k$-partite.
\end{proposition}
\begin{proof}
	In the proof, we first approximate $W$ a sequence of samples of $W$-random graphs $G_n\sim\mathbb G(n,W)$. As a second step, we show that with probability one, each $G_n$ is $k$-colorable. Last, we can transfer the $k$-colorings of $G_n$ to a $k$-coloring of $W$. As for the second step, observe that indeed we have
	$$\Prob_{G_n\sim\mathbb G(n,W)}[\text{$G_n$ is not $k$-colorable}]\le \sum_{H}t(H,W)\;,$$where the sum runs over all not $k$-colorable graphs $H$ on $n$ vertices. Since all the terms are~0 by the assumptions of the proposition, we conclude that $G_n$ is indeed $k$-colorable almost surely.
	
	Let $(G_n)_n$ be samples of the inhomogeneous random graphs $\mathbb G(n,W)$. It is well-known that the graphs $G_n$ converge to $W$ in the cut-distance almost surely, see e.g.,~\cite[Lemma~10.16]{Lovasz2012}. We can therefore map the vertices $\{1,2,\ldots,n\}$ of $G_n$ to sets $\Omega_n^{(1)},\Omega_n^{(2)},\ldots,\Omega_n^{(n)}$ which partition $\Omega$ into sets of measure $\frac1n$ each, in a way that this partition witnesses $\epsilon_n$-closeness of $G_n$ to $W$ in the cut-distance (where $\varepsilon_n\rightarrow 0_+$). In particular, whenever $O\subset V(G_n)$ is an independent set, we have
	\begin{equation*}
	\int_{x\in \bigcup_{v\in O}\Omega_n^{(v)}}\int_{y\in \bigcup_{v\in O}\Omega_n^{(v)}}W(x,y)<\epsilon_n\;.
	\end{equation*}
	
	Recall $G_n$ is $k$-colorable (with probability one). Let us fix a partition $$V_n^{(1)}\sqcup V_n^{(2)}\sqcup \ldots \sqcup V_n^{(k)}=\Omega$$ according to one fixed $k$-coloring of $G_n$. 
	
	Consider now a weak$^*$ accumulation point $(f_1,f_2,\ldots,f_k)$ of the sequence of $k$-tuples of functions 
	$$\left(\mathbf 1_{V_n^{(1)}} \:,\:\mathbf 1_{V_n^{(2)}}\:,\ldots\:,\: \mathbf 1_{V_n^{(k)}}\right)_n\;.$$
	Such an accumulation point exists by the sequential Banach--Alaoglu theorem, see Section~\ref{ssec:WeakConv}.
	
	We have $f_1+f_2+\ldots+f_k=1$ almost everywhere on $\Omega$. Consequently we can find measurable sets $A_i\subset \SUPPORT f_i$ so that $\Omega=A_1\sqcup A_2\sqcup\ldots\sqcup A_k$. Lemma~\ref{lem:limitofalmostindependent} tells us that $A_1\sqcup A_2\sqcup\ldots\sqcup A_k$ is a $k$-coloring of $W$.
\end{proof}

Let us note that in~\cite{DHM:CliquesRandom} a result in a similar direction was proven:
\begin{theorem}
	Suppose that $W$ is a graphon, and $H$ is a finite graph with the property
	that $t(H, W ) = 0$. Then $W$ is countably partite.
\end{theorem}

\subsubsection{A technical lemma}
Lemma~\ref{lemma:bipartite} tells us that if a graphon is not bipartite, then it has a positive density of some odd cycle $C_k$. The next lemma allows us to zoom in into some location, where some of these $C_k$'s are very densely located. That is, we will find sets $A_1,\ldots,A_k\subset \Omega$ such that $W$ is positive on most of $A_h\times A_{h+1}$ for each $h\in[k]$. The additional properties, which we will need for our proof of Theorem~\ref{thm:CovIntegralImpliesBipartite} later, are that the sets $A_h$ have the same measure, and are disjoint.
\begin{lemma}\label{lem:oddcycles}
	Suppose that $W:\Omega^2\rightarrow [0,1]$ is a graphon. If $W$ is not bipartite then there exists an odd integer $k\ge 3$ with the following property. For each $\epsilon>0$ there exist pairwise disjoint sets $A_1,\ldots,A_k\subset \Omega$ of the same positive measure $\alpha$, such that for each $h\in[k]$, $W$ is positive everywhere on $A_{h}\times A_{h+1}$ except a set of measure at most $\epsilon\alpha^2$. Here, we use cyclic indexing, $A_{k+1}=A_1$.
\end{lemma}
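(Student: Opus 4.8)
The plan is to combine Lemma~\ref{lemma:bipartite}, the two-dimensional Lemma~\ref{lem:product_measure}, and a short probabilistic cleanup. Since $W$ is not bipartite, Lemma~\ref{lemma:bipartite} supplies an odd integer $k\ge 3$ with $t(C_k,W)>0$. Put
\[
E:=\{(x_1,\dots,x_k)\in\Omega^k:\ W(x_i,x_{i+1})>0\ \text{for all}\ i\in[k]\}\qquad(\text{cyclic indices, }x_{k+1}=x_1).
\]
The integrand $W(x_1,x_2)W(x_2,x_3)\cdots W(x_k,x_1)$ defining $t(C_k,W)$ is nonnegative and is strictly positive at a point precisely when that point lies in $E$; hence $\nu^{\otimes k}(E)>0$. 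This $k$ will be the one whose existence is asserted, and we fix $\epsilon>0$ from now on.

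Next I would extract a ``box'' lying mostly inside $E$. Although Lemma~\ref{lem:product_measure} is stated for $A\times B$ with $A,B\subseteq\Omega$, its proof --- cover by countably many boxes, pass to a finite subunion, rewrite that as a finite \emph{disjoint} union of boxes, and average --- carries over verbatim to the $k$-fold product $\Omega^k$. Applying this $k$-fold version to $E$ with a parameter $\delta=\delta(\epsilon,k)>0$ to be fixed later, I obtain measurable sets $B_1,\dots,B_k\subseteq\Omega$, each of positive measure, with $\nu^{\otimes k}\big((B_1\times\cdots\times B_k)\setminus E\big)<\delta\prod_h\nu(B_h)$. Intersecting the ``missing'' part with the event that the $h$-th consecutive pair of coordinates is bad and integrating out the remaining $k-2$ coordinates by Fubini yields, for every $h\in[k]$,
\[
\nu^{\otimes 2}(\mathrm{Bad}_h)<\delta\,\nu(B_h)\,\nu(B_{h+1}),\qquad\text{where}\quad \mathrm{Bad}_h:=(B_h\times B_{h+1})\cap\{W=0\}.
\]
Write $\beta:=\min_h\nu(B_h)>0$.

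The heart of the matter --- and the step I expect to be the main obstacle --- is to carve out of the $B_h$ pairwise disjoint sets of a common positive measure while keeping each $\mathrm{Bad}_h$ relatively small, since a priori the $B_h$ overlap and have different measures. I would do this probabilistically. Fix a small $\alpha\in(0,\beta)$ (how small is decided at the end). Using that $\Omega$, hence each $B_h$, is atomless, pick for each $h$ a random subset $B_h'\subseteq B_h$ with $\nu(B_h')=\alpha$, the choices independent over $h\in[k]$, arranged so that every $x\in B_h$ lies in $B_h'$ with probability exactly $\alpha/\nu(B_h)$ (concretely: transport $B_h$ to $[0,\nu(B_h))$ by its increasing measure-preserving map and take the preimage of a length-$\alpha$ arc with uniformly random starting point, read cyclically). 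Then for $(x,y)\in B_h\times B_{h+1}$ we have $\Prob[(x,y)\in B_h'\times B_{h+1}']=\alpha^2/(\nu(B_h)\nu(B_{h+1}))$, whence
\[
\Exp\big[\nu^{\otimes 2}\big((B_h'\times B_{h+1}')\cap\mathrm{Bad}_h\big)\big]=\frac{\alpha^2\,\nu^{\otimes 2}(\mathrm{Bad}_h)}{\nu(B_h)\nu(B_{h+1})}<\delta\alpha^2,
\]
and likewise $\Exp[\nu(B_h'\cap B_j')]\le\alpha^2/\beta$ for $j\ne h$. Summing over $h$ (and over $j$) and applying Markov's inequality, a union bound shows that if $\delta$ is taken $\le\epsilon/(8k)$ and $\alpha$ is taken $\le\beta/(10k^2)$, then with positive probability the realization satisfies simultaneously $\sum_h\nu^{\otimes 2}\big((B_h'\times B_{h+1}')\cap\mathrm{Bad}_h\big)<\tfrac{\epsilon}{2}\alpha^2$ and $\sum_h\sum_{j\ne h}\nu(B_h'\cap B_j')<\tfrac{\alpha}{5}$. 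I fix such a realization $B_1',\dots,B_k'$.

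It remains to clean up. Set $A_h:=B_h'\setminus\bigcup_{j\ne h}B_j'$; these are pairwise disjoint and $\nu(A_h)\ge\alpha-\sum_{j\ne h}\nu(B_h'\cap B_j')>\tfrac45\alpha$. Let $\alpha^*:=\min_h\nu(A_h)\in(\tfrac45\alpha,\alpha]$ and, invoking atomlessness once more, shrink each $A_h$ to a subset of measure exactly $\alpha^*$. The resulting sets $A_1,\dots,A_k$ are pairwise disjoint and each has measure $\alpha^*>0$; and for each $h$, the set where $W$ vanishes on $A_h\times A_{h+1}$ is contained in $(B_h'\times B_{h+1}')\cap\mathrm{Bad}_h$ and so has $\nu^{\otimes 2}$-measure $<\tfrac{\epsilon}{2}\alpha^2<\epsilon(\alpha^*)^2$, the last inequality because $\alpha^*>\tfrac45\alpha$ forces $\alpha^2<2(\alpha^*)^2$. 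Taking ``$\alpha$'' in the statement to be $\alpha^*$, this is exactly the claim. The only genuinely technical points are the $k$-fold extension of Lemma~\ref{lem:product_measure} and the bookkeeping of $\delta$ and $\alpha$, both routine; all the combinatorial content sits in the passage from $t(C_k,W)>0$ to an almost-full box.
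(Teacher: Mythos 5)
Your proof is correct. The first half --- passing from $t(C_k,W)>0$ to an almost-full measurable box via a $k$-fold analogue of Lemma~\ref{lem:product_measure} --- is also the engine of the paper's proof (which carries out the cover-by-rectangles-and-average argument by hand in $\Omega^k$ rather than quoting a $k$-dimensional lemma). Where you genuinely diverge is in how disjointness and equality of the measures $\nu(A_h)$ are achieved. The paper arranges disjointness \emph{before} extracting the box: it partitions $\Omega$ into $n>k^2/t$ cells of measure $\tfrac1n$, discards the ``diagonal'' set $D$ of tuples with two coordinates in a common cell (its contribution to the cycle integral is less than $t$), and thus locates the box inside a product of $k$ \emph{distinct} cells, so the sides are automatically disjoint; equal measures are then obtained deterministically by chopping each side into pieces of measure exactly $\tfrac1p$ and selecting a good sub-box by a convexity/averaging argument. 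You instead take an arbitrary box and repair it afterwards: independent random equal-measure subsets, first-moment bounds on the bad sets and on the pairwise overlaps, Markov plus a union bound ($\tfrac14+\tfrac12<1$), then deletion of overlaps and a final shrink to the common value $\alpha^*$. Both routes work; the paper's is fully deterministic and needs no overlap bookkeeping, while yours skips the preliminary partition and the diagonal estimate at the price of the probabilistic cleanup. Your parameter checks are consistent ($\delta\le\epsilon/(8k)$ and $\alpha\le\beta/(10k^2)$ give the two Markov bounds, and $\alpha^*>\tfrac45\alpha$ yields $\tfrac{\epsilon}{2}\alpha^2<\epsilon(\alpha^*)^2$), and the two points you flag as technical (the $k$-fold version of Lemma~\ref{lem:product_measure}, and realizing equal-measure random subsets via atomlessness) are indeed routine.
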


\begin{proof}
	Suppose that $W$ is not bipartite. By Lemma~\ref{lemma:bipartite} there is an odd integer $k\ge 3$ such that
	\begin{equation}\label{eq:t}
	t:=\int\limits_{\Omega^k}W(x_1,x_2)W(x_2,x_3)\ldots W(x_k,x_1)>0\;.
	\end{equation}
	We find a natural number $n$ such that
	\begin{equation}\label{eq:n}
	n>\frac{k^2}{t}\;.
	\end{equation}
	We fix a decomposition $\Omega=\bigcup\limits_{i=1}^n\Omega_i$ of $\Omega$ into pairwise disjoint sets of the same measure $\tfrac{1}{n}$.
	We also set
	\begin{equation}\label{eq:D}
	\begin{split}
	D=\big\{(x_1,\ldots,x_k)\in\Omega^k\colon&\text{there are }i,j\in\{1,\ldots,k\}\text{ and }\ell\in\{1,\ldots,n\}\\
	&\text{ such that }i\neq j\text{ and }x_i,x_j\in\Omega_{\ell}\big\}\;.
	\end{split}
	\end{equation}
	Then we have
	\begin{equation}\nonumber
	\nu^{\otimes k}\left(D\right)\le\sum\limits_{\substack{i,j=1,\ldots,k\\ i\neq j}}\frac 1n\le\frac{k^2}{n}\stackrel{(\ref{eq:n})}{<}t\;,
	\end{equation}
	and so
	\begin{equation}\label{eq:IntegralOverDiagonal}
	\int\limits_DW(x_1,x_2)W(x_2,x_3)\ldots W(x_k,x_1)\le\nu^{\otimes k}(D)<t\;.
	\end{equation}
	By (\ref{eq:t}) and (\ref{eq:IntegralOverDiagonal}), we get
	\begin{equation}\nonumber
	\int\limits_{\Omega^k\setminus D}W(x_1,x_2)W(x_2,x_3)\ldots W(x_k,x_1)>0\;.
	\end{equation}
	By this and (\ref{eq:D}) there are pairwise distinct integers $\ell_1,\ldots,\ell_k\in\{1,\ldots,n\}$ such that
	\begin{equation}
	\int\limits_{\Omega_{\ell_1}}\int\limits_{\Omega_{\ell_2}}\cdots\int\limits_{\Omega_{\ell_k}}W(x_1,x_2)W(x_2,x_3)\ldots W(x_k,x_1)>0\;,
	\end{equation}
	and so the set
	\begin{equation}\label{eq:definitionOfE}
	E:=\{(x_1,x_2,\ldots,x_k)\in\Omega_{\ell_1}\times\Omega_{\ell_2}\times\ldots\times\Omega_{\ell_k}\colon W(x_1,x_2)W(x_2,x_3)\ldots W(x_k,x_1)>0\}
	\end{equation}
	is of positive measure.
	
	Now let us fix $\varepsilon>0$, and let $\delta>0$ be such that
	\begin{equation}\label{eq:delta}
	\frac{\nu^{\otimes k}(E)-\delta}{\nu^{\otimes k}(E)+\delta}\ge 1-\frac{\eps}{2}\;.
	\end{equation}
	Recall that the $\sigma$-algebra of all measurable subsets of $\Omega_{\ell_1}\times\ldots\times\Omega_{\ell_k}$ is generated by the algebra consisting of all finite unions of measurable rectangles.
	Thus there is a finite union $S=\bigcup\limits_{i=1}^mR_i$ of measurable rectangles $R_1,\ldots,R_m$ in $\Omega_{\ell_1}\times\ldots\times\Omega_{\ell_k}$ such that
	\begin{equation}\nonumber
	\nu^{\otimes k}(E\triangle S)=\nu^{\otimes k}(E\setminus S)+\nu^{\otimes k}(S\setminus E)\le\delta\;.
	\end{equation}
	Without loss of generality, we may assume that the measurable rectangles $R_1,\ldots,R_m$ are pairwise disjoint.
	Then we have
	\begin{equation}\label{eq:pomer}
	\frac{\nu^{\otimes k}(E\cap S)}{\nu^{\otimes k}(S)}\ge\frac{\nu^{\otimes k}(E)-\nu^{\otimes k}(E\setminus S)}{\nu^{\otimes k}(E)+\nu^{\otimes k}(S\setminus E)}\ge\frac{\nu^{\otimes k}(E)-\delta}{\nu^{\otimes k}(E)+\delta}\stackrel{(\ref{eq:delta})}{\ge} 1-\frac{\eps}{2}\;.
	\end{equation}
	Now the left-hand side of (\ref{eq:pomer}) can be expressed as
	\begin{equation}\nonumber \frac{\nu^{\otimes k}(E\cap S)}{\nu^{\otimes k}(S)}=\sum\limits_{i=1}^m\frac{\nu^{\otimes k}(R_i)}{\nu^{\otimes k}(S)}\cdot\frac{\nu^{\otimes k}(E\cap R_i)}{\nu^{\otimes k}(R_i)}\;,
	\end{equation}
	i.e.\ as a convex combination of $\frac{\nu^{\otimes k}(E\cap R_i)}{\nu^{\otimes k}(R_i)}$, $i=1,\ldots,m$.
	Therefore by (\ref{eq:pomer}), there is an index $i_0\in\{1,\ldots,m\}$ such that
	\begin{equation}\label{eq:R_{i_0}}
	\frac{\nu^{\otimes k}(E\cap R_{i_0})}{\nu^{\otimes k}(R_{i_0})}\ge 1-\frac{\eps}{2}\;.
	\end{equation}
	Let $R_{i_0}$ be of the form $R_{i_0}=B_1\times\ldots\times B_k$.
	Find a natural number $p$ such that
	\begin{equation}\label{eq:p}
	p\ge\frac{2k}{\eps\nu^{\otimes k}(R_{i_0})}\;.
	\end{equation}
	For every $i=1,\ldots,k$, we fix a finite decomposition $B_i=B_i^0\cup\bigcup\limits_{j=1}^{q_i}B_i^j$ of $B_i$ into $1+q_i$ many pairwise disjoint sets, such that $q_i:=\lfloor p\nu(B_i)\rfloor$, $\nu(B_i^0)\le\tfrac{1}{p}$ and $\nu(B_i^j)=\tfrac{1}{p}$ for $j=1,\ldots,q_i$.
	Then we clearly have
	\begin{equation}\label{eq:zarovnani}
	\nu^{\otimes k}\left(R_{i_0}\setminus\prod\limits_{i=1}^k\bigcup\limits_{j=1}^{q_i}B_i^j\right)\le\frac{k}{p}\;,
	\end{equation}
	and so
	\begin{equation}\label{eq:PoSeriznuti}
	\frac{\nu^{\otimes k}\left(E\cap\prod\limits_{i=1}^k\bigcup\limits_{j=1}^{q_i}B_i^j\right)}{\nu^{\otimes k}\left(\prod\limits_{i=1}^k\bigcup\limits_{j=1}^{q_i}B_i^j\right)}\stackrel{(\ref{eq:zarovnani})}{\ge}\frac{\nu^{\otimes k}(E\cap R_{i_0})-\tfrac{k}{p}}{\nu^{\otimes k}(R_{i_0})}\stackrel{(\ref{eq:R_{i_0}})}{\ge}1-\frac{\eps}{2}-\frac{k}{p\nu^{\otimes k}(R_{i_0})}\stackrel{(\ref{eq:p})}{\ge}1-\eps\;.
	\end{equation}
	The left-hand side of (\ref{eq:PoSeriznuti}) can be expressed as the following convex combination:
	\begin{equation}\nonumber
	\frac{\nu^{\otimes k}\left(E\cap\prod\limits_{i=1}^k\bigcup\limits_{j=1}^{q_i}B_i^j\right)}{\nu^{\otimes k}\left(\prod\limits_{i=1}^k\bigcup\limits_{j=1}^{q_i}B_i^j\right)}=\sum\limits_{j_1=1}^{q_1}\cdots\sum\limits_{j_k=1}^{q_k}\frac{\nu^{\otimes k}\left(\prod\limits_{i=1}^kB_i^{j_i}\right)}{\nu^{\otimes k}\left(\prod\limits_{i=1}^k\bigcup\limits_{j=1}^{q_i}B_i^j\right)}\cdot\frac{\nu^{\otimes k}\left(E\cap\prod\limits_{i=1}^kB_i^{j_i}\right)}{\nu^{\otimes k}\left(\prod\limits_{i=1}^kB_i^{j_i}\right)}\;.
	\end{equation}
	Therefore by (\ref{eq:PoSeriznuti}), there are $j_i\in\{1,\ldots,q_i\}$, $i=1,\ldots,k$, such that
	\begin{equation}\nonumber
	\frac{\nu^{\otimes k}\left(E\cap\prod\limits_{i=1}^kB_i^{j_i}\right)}{\nu^{\otimes k}\left(\prod\limits_{i=1}^kB_i^{j_i}\right)}\ge 1-\eps\;,
	\end{equation}
	or equivalently
	\begin{equation}\label{eq:equivalently}
	\frac{\nu^{\otimes k}\left(\prod\limits_{i=1}^kB_i^{j_i}\setminus E\right)}{\nu^{\otimes k}\left(\prod\limits_{i=1}^kB_i^{j_i}\right)}\le\eps\;.
	\end{equation}
	We set $A_i=B_i^{j_i}$ for $i=1,\ldots,k$.
	Then $A_1,\ldots,A_k$ are pairwise disjoint (as $A_i\subseteq B_i\subseteq\Omega_{\ell_i}$ for every $i$), and each of these sets has the same measure $\alpha=\tfrac{1}{p}$.
	
	By~(\ref{eq:definitionOfE}) we have 
	\begin{equation}\label{eq:poui}
	\left\{(x_1,x_2,\ldots,x_k)\in A_1\times A_2\times\ldots\times A_{k}\colon W(x_1,x_2)W(x_2,x_3)\ldots W(x_k,x_1)=0\right\}=\prod\limits_{i=1}^kB_i^{j_i}\setminus E\;.
	\end{equation}
	By~(\ref{eq:equivalently}),
	$$\nu^{\otimes k}\left(\prod\limits_{i=1}^kB_i^{j_i}\setminus E\right)\le\epsilon \nu^{\otimes k}\left(\prod\limits_{i=1}^kB_i^{j_i}\right)=\frac{\epsilon}{p^k}\;.$$
	For each $h\in[k]$, we have $$\prod\limits_{i=1}^kB_i^{j_i}\setminus E\supseteq A_1\times\ldots\times A_{h-1}\times \left\{(x_h,x_{h+1})\in A_h\times A_{h+1}\colon W(x_h,x_{h+1})=0\right\}\times A_{h+2}\times\ldots A_k\;.$$
	Plugging this into~\eqref{eq:poui}, we get
	\begin{equation}\nonumber
	\nu^{\otimes 2}\left(\left\{(x_h,x_{h+1})\in A_h\times A_{h+1}\colon W(x_h,x_{h+1})=0\right\}\right)\le\frac{\eps}{p^2}\;,
	\end{equation}
	as required.
\end{proof}

\subsubsection{Application: spectra of bipartite graphons}\label{sssection:spectrabipartite}
It is a well-known fact that a finite graph is bipartite if and only if the spectrum of its adjacency matrix is symmetric. In Theorem~\ref{thm:spectrumbipartite}, we prove a counterpart of this fact for graphons. This result seems to be new. Indeed, just as in the finite case, to obtain this result one needs the characterization of bipartite graphs using odd cycles, which we gave in Lemma~\ref{lemma:bipartite}. This section is not needed for our main results concerning matching polytons.

Let us briefly recall the notion of eigenvalues of graphons, following~\cite[Section 7.5]{Lovasz2012}, where more details can be found. To a given graphon $W:\Omega^2\rightarrow [0,1]$ we can associate a kernel operator $T_W:\LpSpace^2(\Omega)\rightarrow\LpSpace^2(\Omega)$,
$$\left(T_W(f)\right)(x):=\int_{y\in\Omega} W(x,y)f(y)\;.$$
Then $T_W$ is a Hilbert--Schmidt operator, and thus it has a real spectrum of finitely or countably many nonzero eigenvalues. The spectrum is said to be \emph{symmetric} if each $\lambda\in\mathbb{R}$ is an eigenvalue if and only if $-\lambda$ is, and their multiplicities are the same.
\begin{theorem}\label{thm:spectrumbipartite}
A graphon is bipartite if and only if the associated kernel operator has a symmetric spectrum.
\end{theorem}
\begin{proof}
Suppose first that $W:\Omega^2\rightarrow [0,1]$ is a bipartite graphon, and the colour classes are $A\sqcup B=\Omega$. Let $\lambda\in\mathbb{R}$ be an arbitrary eigenvalue and let $f\in L^2(\Omega)$ be the corresponding eigenfunction. Define a function $g\in L^2(\Omega)$ by setting $g(x)=f(x)$ for $x\in A$ and $g(x)=-f(x)$ for $x\in B$. Now, for almost every $x\in A$ we have
\begin{align*}
T_W(g)(x)&=\int_{y\in\Omega} W(x,y)g(y)=\int_{y\in B} W(x,y)\cdot(-f(y))=-\int_{y\in B} W(x,y) f(y)\\
&=-\int_{y\in \Omega} W(x,y) f(y)=-T_W(f)(x)=-\lambda f(x)=-\lambda g(x)\;,
\end{align*}
where the fact that $A$ is an independent set justifies the second and the fourth equality, and the fact that $f$ is an eigenvalue of $T_W$ is used for the sixth equality. Similar calculations give that for almost every $x\in B$ we have that $T_W(g)(x)=-\lambda g(x)$. We conclude that $g$ is an eigenfunction for eigenvalue $-\lambda$. Hence, $T_W$ has a symmetric spectrum.

Let us now prove the converse direction. Suppose that $W$ is a graphon such that $T_W$ has a symmetric spectrum. In particular, for every  $k\in\{3,5,7,\ldots\}$ we have that $\sum_{\lambda\in \mathrm{Spec}(T_W)}\lambda^k=0$. Just as in the graph case, the sum of the $k$-th powers of the eigenvalues corresponds to the $k$-cycle density (see~\cite[Equation~(7.22)]{Lovasz2012}), i.e., $t(C_k,W)=0$. Lemma~\ref{lemma:bipartite} then tells us that $W$ is bipartite.
\end{proof}

\subsection{Introducing matchings and vertex covers in graphons}\label{sec:matchings}
We introduce the notion of matchings in a graphon. Our definitions follow~\cite{HlHuPi:TilingsInGraphons}, where they were given in the more general context of $F$-tilings; for completeness, we recall this more general notation in Section~\ref{sssec:Ftilings}.
\begin{definition}\label{def:matching}
Suppose that $W:\Omega^2\rightarrow[0,1]$ is a graphon. We say that a function $\mathfrak m\in\LpSpace^1(\Omega^2)$ is a \emph{matching} in $W$ if
\begin{enumerate}[label=(M\arabic*)]
\item $\mathfrak m\ge 0$ almost everywhere,
\item $\SUPPORT \mathfrak m\subset \SUPPORT W$ up to a null-set, and
\item for almost every $x\in\Omega$, we have $\int_y \mathfrak m(x,y)+\int_y \mathfrak m(y,x)\le 1$.
\end{enumerate}
\end{definition}
In~\cite{HlHuPi:TilingsInGraphons} it is argued in detail why this is ``the right'' notion of matchings. We do not repeat this discussion here and only make two comments. Firstly, the requirements in Definition~\ref{def:matching} are counterparts to fractional matchings in finite graphs. Namely, a fractional matching in a graph $G$ can be represented as a function $f:V(G)^2\rightarrow \mathbb R$ such that
\begin{enumerate}[label=(F\arabic*)]
\item\label{enu:finitematching1} $f\ge 0$,
\item\label{enu:finitematching2} if $f(x,y)>0$ then $xy\in E(G)$, and
\item\label{enu:finitematching3} for every $x\in V(G)$, we have $\sum_y f(x,y)+\sum_y f(y,x)\le 1$.
\end{enumerate}
Note that usually fractional matchings are represented using symmetric functions. This is however only a notational matter. The current choice for these functions being not-necessarily symmetric is adopted from~\cite{HlHuPi:TilingsInGraphons}. (There, this choice was dictated not by matching, but rather by creating a general concept of $F$-tilings even for graphs which are not vertex-transitive.) 

Secondly, note that the actual values of $W$ do not play any role in Defition~\ref{def:matching}, only the support of $W$ matters. To explain this to a curious reader, we need to assume some familiarity with the regularity method, which is a finite counterpart of the theory of graphons. (Readers who are not familiar with the regularity method or not curious may skip this text.) The fact that the values of $W$ are not relevant (as long as they are positive) reflects the situation in the blow-up lemma~\cite{Komlos1997}, which says that one can find a perfect matching in any super-regular pair of any density (as long as it is positive).

\medskip

Given a matching $\mathfrak m$ in a graphon $W$ we define its \emph{size}, $\|\mathfrak m\|=\int_x\int_y \mathfrak m(x,y)$. The \emph{matching ratio} of $W$, denoted by $\matchratio(W)$, is defined as the supremum of the sizes of all matchings in $W$.

\begin{remark}\label{rem:intVSfrac}
	As said already in the Introduction, even though Definition~\ref{def:matching} is inspired by fractional matchings in finite graphs, the resulting graphon concept is referred to as ``matchings''. This is because in the graphon world  every function $\mathfrak m$ from Definition~\ref{def:matching} behaves in many ways as an integral matching. One demonstration of this is Theorem~\ref{thm:tilingtransference} which relates sizes of matchings in finite graphs to the matching ratio of the limit graphon. 
	
	On the other hand, for a graphon representation $W_G$ of one finite graph $G$ there is correspondence between matchings in $W_G$ and integral matchings in $G$; see Section~\ref{sssec:polytonsofrepresentations}. Thus, the previous paragraph applies only to the limit setting.
\end{remark}

We write $\matchingpolytope(W)\subset \LpSpace^1(\Omega^2)$ for the set of all matchings in $W$. It is straightforward to check that this set is convex (like the set of fractional matchings in a finite graph) and closed (if we consider the norm topology on $\LpSpace^1(\Omega^2)$). But --- unlike the finite case --- it need not be compact. To see this consider the graphon $U:[0,1]^2\rightarrow[0,1]$ defined as $U(x,y)=1$ for $x+y\le 1$ and $U(x,y)=0$ for $x+y>1$. This example was first given in~\cite{HlHuPi:TilingsInGraphons} in a somewhat different context. For $\epsilon$ positive, consider a matching $\mathfrak m_\epsilon$ defined to be $1/(2\epsilon)$ on a stripe of width $\epsilon$ along the diagonal $x+y=1$ and zero otherwise. This is shown on Figure~\ref{fig:half}. It is clear that the matchings $\mathfrak m_\epsilon$ do not contain any convergent subsequence, as we let $\epsilon\rightarrow 0_+$. Considering the weak topology on the space $\LpSpace^1(\Omega^2)$ (that is the topology generated by the dual space $\LpSpace^{\infty}(\Omega^2)$) does not help as the same counterexample easily shows. Therefore considering the set $\matchingpolytope(W)$ as a subset of the second dual of $\LpSpace^1(\Omega^2)$ equipped with its weak$^*$ topology seems to be the only reasonable way to have a natural compactification of $\matchingpolytope(W)$. However, we did not need go that far.
\begin{figure}
\includegraphics[scale=0.8]{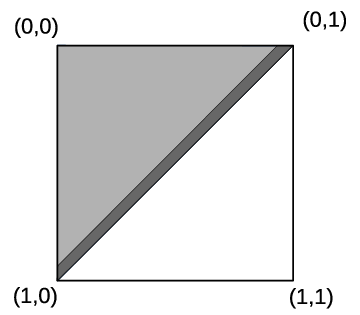} 
\caption{The graphon $U$ discussed in the text. A matching $\mathfrak m_\epsilon$ shown in dark gray. The support of $U$ is shown in light gray.}
\label{fig:half}
\end{figure}

\bigskip
We can now proceed with the definition of fractional vertex covers of a graphon. First, recall that a function $c:V(G)\rightarrow[0,1]$ is a \emph{fractional vertex cover} of a finite graph $G$ if we have $c(x)+c(y)\ge 1$ for each $xy\in E(G)$. Thus, the graphon counterpart is as follows.
\begin{definition}\label{def:cover}
Suppose that $W:\Omega^2\rightarrow[0,1]$ is a graphon. We say that a function $\mathfrak c\in\LpSpace^\infty(\Omega)$ is a \emph{fractional vertex cover} of $W$ if $0\le \mathfrak c\le 1$ almost everywhere and the set $$\SUPPORT W\setminus\{(x,y):\mathfrak c(x)+\mathfrak c(y)\ge 1\}$$
has measure 0.
\end{definition}
A fractional vertex cover is called an \emph{integral vertex cover} if its values are from the set $\{0,1\}$ almost everywhere.

Given a fractional vertex cover $\mathfrak c$ of a graphon $W$ we define its \emph{size}, $\|\mathfrak c\|=\int_x \mathfrak c(x)$. The fractional cover number, $\fraccoverratio(W)$ is the infimum of sizes of all fractional vertex covers of $W$.

We write $\fraccoverpolytope(W)\subset \LpSpace^\infty(\Omega)$ for the set of all fractional vertex covers of $W$. It is straightforward to check that this set is convex. Further, as was first shown in~\cite[Theorem~3.14]{HlHuPi:TilingsInGraphons}, it is also compact in the space $\LpSpace^\infty(\Omega)$ equipped with the weak$^*$ topology.

\subsubsection{$F$-tilings}\label{sssec:Ftilings} In this brief section we show, how the concept of matchings is generalized in~\cite{HlHuPi:TilingsInGraphons} to general $F$-tilings. This section is just for a comparison and these more general definitions are not needed in the rest of the paper. Let us recall that given a graph $F$, an \emph{$F$-tiling} in a graph $G$ is a collection of (not necessarily induced) vertex-disjoint copies of $F$ in $G$. Let us now include a corresponding definition for graphons.
\begin{definition}
	\label{def:graphontiling}Suppose that $W:\Omega^{2}\rightarrow[0,1]$
	is a graphon, and that $F$ is a graph on the vertex set $[k]$. A function $\mathfrak{t}\in L^1(\Omega^{k})$ is called 	an \emph{$F$-tiling} in $W$ if 
\begin{enumerate}
\item $\mathfrak{t}\ge 0$,
\item $\SUPPORT \mathfrak{t}\subset \left\{(x_1,\ldots,x_k)\in \Omega^k:\prod_{i,j:ij\in E(F)}W(x_i,x_j)>0\right\}$, and
\item  for each $\ell\in [k]$ and for almost every $x_\ell \in\Omega$, we have that 
$ \sum_{\ell=1}^{k}\int_{x_{1},\ldots x_{\ell-1},x_{\ell+1},\ldots,x_{k}}\mathfrak{t}(x_{1},\ldots,x_{k})\;\mathrm{d}\nu^{\otimes k-1}\le 1$.
\end{enumerate}
	The \emph{size} of an $F$-tiling $\mathfrak{t}$ is $\|\mathfrak{t}\|=\int\mathfrak{t}(x_{1},\ldots,x_{k})\;\mathrm{d}\nu^{k}$.
	The \emph{$F$-tiling number} of $W$ is
	the supremum of sizes over all $F$-tilings in $W$.
\end{definition}

\subsubsection{Matchings and fractional vertex covers of representations of finite graphs}\label{sssec:polytonsofrepresentations} 

Suppose that $G$ is a graph, and $W_G:\Omega^2\rightarrow[0,1]$ is a graphon representation of $G$ corresponding to a partition $\bigsqcup_{v\in V(G)}\Omega_v=\Omega$. Given a function $\mathfrak m\in\LpSpace^1(\Omega^2)$, define a function $f_{\mathfrak m}:V(G)^2\rightarrow \mathbb{R}$ by
$$f_{\mathfrak m}(u,v):=v(G)^2\cdot\int_{\Omega_u}\int_{\Omega_v}\mathfrak m\;.$$
It is straightforward to check that if $\mathfrak m$ is a matching in $W_G$ in the sense of Definition~\ref{def:matching} then $f_{\mathfrak m}$ is a fractional matching in $G$ in the sense of~\ref{enu:finitematching1}--\ref{enu:finitematching3}. The converse is not true in general: it may happen than $f_{\mathfrak m}$ is a fractional matching in $G$ but $\mathfrak m$ does not satisfy either (M1) or (M3) (or both) from Definition~\ref{def:matching}. However, it is easy to verify that we have
\begin{equation}
\begin{split}
\matchingpolytope(W_G)=\{\mathfrak m\in L^1(\Omega^2)\colon \mathfrak m\ge 0\text{ a.e.}, \int_y\mathfrak m(x,y)+\int_y\mathfrak m(y,x)\le 1\text{ for a.e. }x\in\Omega,\\ f_\mathfrak m\text{ is a fractional matching in }G \}\;.
\end{split}
\end{equation}

Given a function $\mathfrak c:\Omega\rightarrow[0,1]$, define a function $g_{\mathfrak c}:V(G)\rightarrow [0,1]$ by
\begin{equation}\label{eq:gc}
g_{\mathfrak c}(u):=\mathrm{essinf}\left(\mathfrak c_{\restriction{\Omega_u}}\right)\;.
\end{equation}
It is straightforward to check that $\mathfrak c$ is a fractional vertex cover in $W_G$ in the sense of Definition~\ref{def:cover} if and only if $g_{\mathfrak c}$ is a fractional vertex cover in $G$ in the usual sense. 

Let us try to reverse this: for a function $g:V(G)\rightarrow [0,1]$, let $\mathfrak c_g:\Omega\rightarrow [0,1]$ be defined by
\begin{equation}\label{eq:defc}
\mathfrak c_g(x):=c(u)\quad \text{for each $u\in V(G)$ and each $x\in \Omega_u$}\;.
\end{equation}
Then one can easily verify that
\begin{equation}
\begin{split}
\fraccoverpolytope(W_G)=\{\mathfrak c\in L^{\infty}(\Omega)\colon \text{ there is a fractional vertex cover } g \text{ of } G\\ \text{ such that } \mathfrak c_g(x)\le\mathfrak c(x)\le 1 \text{ for a.e. }x\in\Omega \}\;.
\end{split}
\end{equation}


\section{Extreme points of fractional vertex cover polytons}\label{sec:extremepoints}
\subsection{Digest of properties of fractional vertex cover polytopes}\label{sub:fvcpFiniteDigest}
Let $G$ be a finite graph. Then the set $\fraccoverpolytope(G)\subset [0,1]^{V(G)}$ is a polytope. It is a fundamental fact in combinatorial optimization that all the vertices of this polytope are half-integral (i.e., in the form $\{0,\frac12,1\}^{V(G)}$), \cite[Theorem 30.2]{Schrijver2003}. Furthermore, all its vertices are integral if and only if $G$ is bipartite, \cite[Theorem 18.3]{Schrijver2003}. 

\subsection{Graphon counterparts}\label{sub:graphonintegrality}
In view of the Krein--Milman Theorem (see Section~\ref{sssec:KreinMilman}) the graphon counterparts to the results described in Section~\ref{sub:fvcpFiniteDigest} will be expressed in terms of $\calE(\fraccoverpolytope(W))$. Let us now state these counterparts. We say that a function $f\in \LpSpace^\infty(\Omega)$ is \emph{integral} if for almost all $x\in\Omega$ we have $f(x)\in\{0,1\}$. We say that a function $f\in \LpSpace^\infty(\Omega)$ is \emph{half-integral} if for almost all $x\in\Omega$ we have $f(x)\in\{0,\frac 12,1\}$.

The following three theorems (proven later) are the main results of this section.

\begin{theorem}\label{thm:CovHalfIntegral}
Suppose that $W:\Omega^2\rightarrow[0,1]$ is a graphon. Then all the extreme points of 
$\fraccoverpolytope(W)$ are half-integral.
\end{theorem}
\begin{theorem}\label{thm:CovIntegral}
	Suppose that $W:\Omega^2\rightarrow[0,1]$ is a bipartite graphon. Then all the extreme points of $\fraccoverpolytope(W)$ are integral.
\end{theorem}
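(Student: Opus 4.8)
The plan is to mimic the finite-dimensional argument that a fractional vertex cover of a bipartite graph which is not integral cannot be a vertex, by exhibiting a nontrivial perturbation. Write $\Omega=\Omega_A\sqcup\Omega_B$ for the bipartition witnessing that $W$ is bipartite, so that $W=0$ a.e.\ on $(\Omega_A\times\Omega_A)\cup(\Omega_B\times\Omega_B)$. Suppose $\mathfrak c\in\fraccoverpolytope(W)$ is not integral; I want to produce $g\in\LpSpace^\infty(\Omega)$, $g\not\equiv 0$, such that both $\mathfrak c+g$ and $\mathfrak c-g$ lie in $\fraccoverpolytope(W)$, which forces $\mathfrak c=\frac12((\mathfrak c+g)+(\mathfrak c-g))$ to be a non-extreme point.

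The key idea for choosing $g$ is to perturb in opposite directions on the two sides of the bipartition. First I would locate a set on which there is ``room'': since $\mathfrak c$ is not integral, for some $\delta>0$ the set $M=\{x:\delta\le\mathfrak c(x)\le 1-\delta\}$ has positive measure, and hence $M\cap\Omega_A$ or $M\cap\Omega_B$ has positive measure — say without loss of generality $M_A:=M\cap\Omega_A$ does. The naive choice $g=\delta\mathbbm 1_{M_A}$ fails because raising $\mathfrak c$ on $\Omega_A$ is harmless for the cover constraint but lowering it on $\Omega_A$ can violate $\mathfrak c(x)+\mathfrak c(y)\ge 1$ for edges inside $\SUPPORT W$. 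The fix is to also raise $\mathfrak c$ on the $\Omega_B$-side by a matching amount wherever such edges occur; but an edge of $W$ from $M_A$ could go to a point $y\in\Omega_B$ with $\mathfrak c(y)$ already equal to $1$, leaving no room. So the real content is: if $M_A$ has positive measure, one can shrink it to a still-positive-measure subset $M_A'$ all of whose $W$-neighbourhoods avoid the ``bad'' part of $\Omega_B$, i.e.\ the part where raising $\mathfrak c$ is impossible.

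Here is where Lemma~\ref{lem:product_measure} enters, and this is the step I expect to be the main obstacle. Consider the ``tight'' set $T=\{(x,y)\in\SUPPORT W:\mathfrak c(x)+\mathfrak c(y)=1\}$ together with its restriction to $M_A\times\Omega_B$; for a point $x\in M_A$ its tight neighbours $y$ have $\mathfrak c(y)=1-\mathfrak c(x)\le 1-\delta$, so there is still room $\delta$ to increase $\mathfrak c(y)$ — but the amount of room depends on $x$, which makes the two-sided perturbation delicate. I would instead discretise: partition $M_A$ into finitely many pieces $M_A^{(i)}$ on which $\mathfrak c$ varies by at most $\delta/4$, and partition $\Omega_B$ similarly into pieces $\Omega_B^{(j)}$; then on each pair $(i,j)$ the constraint $\mathfrak c(x)+\mathfrak c(y)\ge 1$ is, up to an error $\delta/2$, a single inequality between two constants, and either the pair carries essentially no edges of $W$ (in which case free perturbation is possible) or $\mathfrak c$ is bounded away from $1$ on $\Omega_B^{(j)}$ as well. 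Applying Lemma~\ref{lem:product_measure} inside each relevant rectangle $M_A^{(i)}\times\Omega_B^{(j)}$ lets me find a genuine measurable rectangle almost entirely inside $\SUPPORT W$ (or conclude that block of $\SUPPORT W$ is null), which upgrades these ``essentially'' statements to a clean dichotomy. Collecting, I obtain a positive-measure set $P_A\subseteq\Omega_A$ and a positive-measure set $P_B\subseteq\Omega_B$ with a uniform $\eta>0$ such that $\mathfrak c\le 1-\eta$ on $P_A\cup P_B$, every $W$-edge out of $P_A$ into the tight region lands in $P_B$, and symmetrically. Then $g:=\eta\bigl(\mathbbm 1_{P_B}-\mathbbm 1_{P_A}\bigr)$ — or a suitably small multiple thereof — is the desired perturbation: on $(\Omega_A\times\Omega_A)$ and $(\Omega_B\times\Omega_B)$ there is nothing to check because $W=0$ there, and on the cross terms the constraint is preserved for $\mathfrak c\pm g$ by construction, while $0\le\mathfrak c\pm g\le1$ holds since we stayed $\eta$-away from both endpoints on the support of $g$.

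Finally I would remark that the converse direction — that the bipartite hypothesis is also necessary, paralleling \cite[Theorem 18.3]{Schrijver2003} — would follow from exhibiting a non-bipartite graphon with a non-integral extreme point (e.g.\ a triangle-like graphon with $\mathfrak c\equiv\frac12$), but the theorem as stated only asserts the ``if'' direction, so the argument above suffices. The only subtlety worth flagging in the writeup is measurability of all the sets produced (the tight set $T$, the level sets of $\mathfrak c$, the pieces $M_A^{(i)}$), which is routine since $\mathfrak c$ and $W$ are measurable and $\SUPPORT W$ is defined up to null sets.
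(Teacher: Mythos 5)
There is a genuine gap, and it sits exactly where you flagged the difficulty: the constant-magnitude two-sided perturbation your argument aims to produce need not exist. Unwinding your construction, $g=\eta\bigl(\mathbbm{1}_{P_B}-\mathbbm{1}_{P_A}\bigr)$ keeps both $\mathfrak c\pm g$ in $\fraccoverpolytope(W)$ only if (a) $\eta\le\mathfrak c\le1-\eta$ on $P_A\cup P_B$, and (b) for almost every edge $(x,y)\in\SUPPORT W$ with $\mathfrak c(x)+\mathfrak c(y)<1+\eta$ one has $x\in P_A\iff y\in P_B$. Note that (b) must involve \emph{near}-tight edges, not just the exactly tight set $T$ (which can be null while near-tight edges have full measure in $\SUPPORT W$), since lowering an endpoint by $\eta$ breaks any edge with slack below $\eta$. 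Condition (b) forces $P_A\cup P_B$ to be closed under taking near-tight neighbourhoods, and such a closed set of positive measure satisfying (a) can fail to exist. Concretely: let $\Omega_A=[0,\frac12]$, $\Omega_B=(\frac12,1]$, $\mathfrak c(x)=x$ on $\Omega_A$, $\mathfrak c(y)=\frac32-y$ on $\Omega_B$, and let $W$ be the (symmetrised) indicator of the band $\{1\le\mathfrak c(x)+\mathfrak c(y)\le1+\epsilon_0\}$ for small $\epsilon_0$. Every edge is $\epsilon_0$-near-tight, and a Fubini argument shows that the closure of any positive-measure $P_A$ under the two-step neighbourhood operation sweeps out all of $[0,\frac12]$ in increments of $\epsilon_0$, in particular reaching points with $\mathfrak c<\eta$, contradicting (a); if instead $P_A$ is null and $P_B$ is not, the edges out of $P_B$ cannot all land in a null set. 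So for this perfectly legitimate non-integral $\mathfrak c$ no perturbation of your proposed form exists. The discretisation plus Lemma~\ref{lem:product_measure} cannot rescue this: the obstruction is the unbounded length of the closure iteration (each round loses a further $\epsilon_0$ of room), not the measurability or approximate-rectangle issues those tools address.

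The fix is to embrace the very feature you tried to engineer away: let the magnitude of the perturbation depend on the point. The paper sets $\partial(u)=\min(u,1-u)$ and defines $\mathfrak c'=\mathfrak c+\partial(\mathfrak c)$ on $\Omega_A$ and $\mathfrak c'=\mathfrak c-\partial(\mathfrak c)$ on $\Omega_B$, with $\mathfrak c''$ the reverse. The elementary pointwise inequality (Fact~\ref{fact:perturbe}) that $x+y\ge1$ implies $(x+\partial(x))+(y-\partial(y))\ge1$ holds for \emph{every} pair, tight or not, so the cover constraint for $\mathfrak c'$ and $\mathfrak c''$ is verified edge by edge with no need to locate tight sets, control neighbourhoods, or discretise; clearly $\mathfrak c=\frac12(\mathfrak c'+\mathfrak c'')$, and non-integrality gives $\partial(\mathfrak c)>0$ on a set of positive measure, so $\mathfrak c'\ne\mathfrak c$. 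Your overall strategy --- raise on one side of the bipartition, lower on the other --- is the right one; only the shape of the perturbation needs changing.
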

\begin{theorem}\label{thm:CovIntegralImpliesBipartite}
Suppose that $W:\Omega^2\rightarrow[0,1]$ is a graphon. If all the extreme points of 
$\fraccoverpolytope(W)$ are integral then $W$ is bipartite.
\end{theorem}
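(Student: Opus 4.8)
The plan is to argue by contraposition: assuming $W$ is not bipartite, I will produce a non-integral extreme point of $\fraccoverpolytope(W)$. The guide is the finite case, where a shortest odd cycle $C$ in a non-bipartite graph $G$ yields the vertex of $\fraccoverpolytope(G)$ that equals $\tfrac12$ on $V(C)$ and $1$ off it; this is a vertex precisely because an odd cycle admits no nonzero $g\colon V(C)\to\mathbb{R}$ with $g(u)+g(v)=0$ along every edge. In the graphon world I expect to bypass the construction of an actual odd cycle: non-bipartiteness alone should already rule out such a ``signed colouring'' of $\SUPPORT W$, provided one works inside a \emph{connected} piece of $W$.

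First I would invoke the decomposition of a graphon into connected components (see~\cite{Lovasz2012}): a measurable partition $\Omega=\bigsqcup_{i\ge0}\Omega_i$ with $W=0$ a.e.\ on $\Omega_0\times\Omega$ and on $\Omega_i\times\Omega_j$ for $i\neq j$, and with each $W$ restricted to $\Omega_i^2$, $i\ge1$, connected. If every such restriction were bipartite, gluing the parts together would make $W$ bipartite; hence some component, with ground set $\Omega_0$ say, has $W[\Omega_0]$ connected and non-bipartite. Set $\mathfrak c:=\tfrac12\mathbbm{1}_{\Omega_0}+\mathbbm{1}_{\Omega\setminus\Omega_0}$. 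Since $W$ has (a.e.) no edge between $\Omega_0$ and $\Omega\setminus\Omega_0$, every edge of $W$ lies in $\Omega_0^2$ or $(\Omega\setminus\Omega_0)^2$, where $\mathfrak c(x)+\mathfrak c(y)$ equals $1$ or $2$; so $\mathfrak c\in\fraccoverpolytope(W)$, and it is non-integral because $\nu(\Omega_0)>0$.

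The key step is to show $\mathfrak c$ is extreme. Suppose $\mathfrak c=\tfrac12(\mathfrak c+g)+\tfrac12(\mathfrak c-g)$ with $\mathfrak c\pm g\in\fraccoverpolytope(W)$; I must deduce $g=0$ a.e. The constraints $0\le\mathfrak c\pm g\le1$ force $g=0$ a.e.\ on $\Omega\setminus\Omega_0$, and — using again that $W$ has no edge across $\Omega_0$ — the cover inequalities for $\mathfrak c+g$ and $\mathfrak c-g$ reduce to ones along $\SUPPORT(W[\Omega_0])$, giving $g(x)+g(y)\ge0$ and $g(x)+g(y)\le0$ a.e.\ there, hence $g(x)+g(y)=0$ a.e.\ on $\SUPPORT(W[\Omega_0])$. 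Everything then rests on the claim: \emph{if a graphon $W'$ is connected and non-bipartite and $g(x)+g(y)=0$ for a.e.\ $(x,y)\in\SUPPORT W'$, then $g=0$ a.e.} I would prove this by splitting the ground set into $P=\{g>0\}$, $N=\{g<0\}$, $Z=\{g=0\}$: the relation $g(y)=-g(x)$ on $\SUPPORT W'$ shows $W'$ is a.e.\ zero on $P^2$, on $N^2$, and between $Z$ and $P\cup N$. Connectedness forces $Z$ or $P\cup N$ to be null; if $Z$ is null then either $P,N$ are both non-null, so $P\sqcup N$ exhibits $W'$ as bipartite, or one of them is null, so $W'$ is a.e.\ zero and hence bipartite — both contradicting the choice of $W'$. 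Thus $P\cup N$ is null, i.e.\ $g=0$ a.e.; applying this with $W'=W[\Omega_0]$ shows $\mathfrak c$ is a non-integral extreme point, contradicting the hypothesis and proving $W$ bipartite.

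The routine parts are the finite-graph-style bookkeeping with the cover inequalities and the customary measure-theoretic care with supports and restrictions. The main obstacle is conceptual rather than computational: for a \emph{disconnected} non-bipartite $W$ the constant $\tfrac12$ need not be extreme — a bipartite component can absorb a nonzero perturbation $g$ — so the argument must localise to a single connected component before non-bipartiteness can be used at all. It is precisely the connected-component decomposition of a graphon that makes this localisation available.
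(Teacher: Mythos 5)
Your argument is correct, but it proves the theorem by a genuinely different route from the paper. The paper also argues by contraposition and also uses the constant-$\frac12$ cover, but it never shows that any particular non-integral cover is extreme: instead it extracts from non-bipartiteness an ``approximate odd cycle'' (Lemma~\ref{lemma:bipartite}, proved by transfinite induction, followed by Lemma~\ref{lem:oddcycles}, which produces disjoint sets $A_1,\dots,A_k$ with $W$ almost everywhere positive on consecutive products), uses an averaging argument over these sets to show that the constant-$\frac12$ function lies outside the weak$^*$-closed convex hull of the \emph{integral} covers, and then invokes the Krein--Milman theorem (legitimate here because $\fraccoverpolytope(W)$ is weak$^*$ compact) to conclude abstractly that some extreme point is non-integral. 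Your proof is more constructive and, to my mind, closer to the finite-graph intuition: you localise to a connected non-bipartite component $\Omega'$, take the cover equal to $\frac12$ on $\Omega'$ and $1$ elsewhere, and verify extremality directly via the sign decomposition $P=\{g>0\}$, $N=\{g<0\}$, $Z=\{g=0\}$, where connectedness kills the split $Z$ versus $P\cup N$ and non-bipartiteness kills the split $P$ versus $N$. That verification is sound (including the degenerate subcase where $W'$ would be a.e.\ zero), and your remark that the localisation is genuinely needed --- a bipartite component elsewhere can absorb a perturbation --- is exactly right. What your route buys is an explicit non-integral extreme point and no appeal to Krein--Milman or to odd-cycle densities; what it costs is the decomposition of a graphon into at most countably many connected components plus a degenerate part. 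That decomposition is a true but non-trivial external theorem (it is due to Janson, ``Connectedness in graph limits''; I would cite that rather than \cite{Lovasz2012}), and it is the one ingredient you must either import with a precise reference or prove --- note that the paper's own transfinite-induction proof of Lemma~\ref{lemma:bipartite} is essentially a hand-built substitute for such component/connectivity arguments. With that reference supplied, your proof is complete.
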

Observe that every extreme point of the fractional vertex cover polyton of a bipartite graphon is in fact exposed. Indeed, let $\phi\in \calE(\fraccoverpolytope(W))$ for some bipartite graphon $W:\Omega^2\rightarrow [0,1]$. Then Theorem~\ref{thm:CovIntegral} tells us that the sets $A=\phi^{-1}(0)$ and $B=\phi^{-1}(1)$ partition $\Omega$ up to a 0-measure set. It is now clear that the linear functional $f$,
$$f(\psi):=\int_B \psi(x)\mathrm d x-\int_A \psi(x)\mathrm d x$$ is strictly maximized at $\phi$ on $\fraccoverpolytope(W)$. We do not know whether the same can be proved for non-bipartite graphons as well (see Section~\ref{exposed}).

\subsection{Proof of Theorem~\ref{thm:CovIntegral}}
During the proof of Theorem~\ref{thm:CovIntegral}, we employ the notation $\partial(z):=\min(z,1-z)$ for $z\in[0,1]$. We shall need the following easy fact.
\begin{fact}\label{fact:perturbe}
Suppose that $x,y\in[0,1]$ are two reals which satisfy $x+y\ge 1$. Then the numbers $x^+=x+\partial(x)$ and $y^-=y-\partial(y)$ satisfy $x^+,y^-\in[0,1]$ and $x^+ + y^-\ge 1$.
\end{fact}
\begin{proof}
The fact that $x^+,y^-\in[0,1]$ is obvious. To prove that $x^+ + y^-\ge 1$, we distinguish three cases. First, suppose that $0\le x\le \frac12\le y\le 1$. Then $\partial(x)=x$, $\partial(y)=1-y$, and consequently, $x^+ + y^-=2x+2y-1\ge 1$. Second, suppose that $0\le y\le \frac12\le x\le 1$. Then $\partial(x)=1-x$, $\partial(y)=y$, and consequently, $x^+ + y^-=1$. Third, suppose that $\frac12\le x,y\le 1$. Then $\partial(x)=1-x$, $\partial(y)=1-y$, and consequently, $x^+ + y^-=2y\ge 1$.
\end{proof}

\begin{proof}[Proof of Theorem~\ref{thm:CovIntegral}]
Let $\Omega=\Omega_A\sqcup\Omega_B$ be a partition into two sets of positive measure such that $W$ is zero almost everywhere on $(\Omega_A\times\Omega_A)\cup(\Omega_B\times\Omega_B)$.
Suppose that $\frakc\in\fraccoverpolytope(W)$ is not integral. Using the notation from Fact~\ref{fact:perturbe}, define two functions $\frakc',\frakc'':\Omega_A\sqcup\Omega_B\rightarrow \bbR$ by $\frakc'(a)=\frakc(a)^+$, $\frakc'(b)=\frakc(b)^-$, $\frakc''(a)=\frakc(a)^-$, $\frakc''(b)=\frakc(b)^+$, for each $a\in\Omega_A$ and $b\in\Omega_B$. By Fact~\ref{fact:perturbe}, we have that $\frakc',\frakc''\in\fraccoverpolytope(W)$. Further $\frakc=\frac12(\frakc'+\frakc'')$. As $\frakc$ is not integral, we have that $\frakc$ is distinct from $\frakc'$ and $\frakc''$. We conclude that $\frakc$ is not an extreme point of $\fraccoverpolytope(W)$.
\end{proof}

\subsection{Proof of Theorem~\ref{thm:CovHalfIntegral}} The proof of Theorem~\ref{thm:CovHalfIntegral} is very similar to that of Theorem~\ref{thm:CovIntegral}.
We first state the counterpart of Fact~\ref{fact:perturbe} we need to this end. We omit the proof as it is almost the same as that of Fact~\ref{fact:perturbe}.
Here, we employ the notation $\partial^{\bullet}(z):=\min(z,1-z,|\frac12-z|)$ for $z\in[0,1]$.

\begin{fact}\label{fact:perturbehalf}
Suppose that $x,y\in[0,1]$ are two reals which satisfy $x+y\ge 1$. Then the numbers $x^{+\bullet}=x+\partial^{\bullet}(x)$ and $y^{-\bullet}=y-\partial^{\bullet}(y)$ satisfy $x^{+\bullet},y^{-\bullet}\in[0,1]$ and $x^{+\bullet} + y^{-\bullet}\ge 1$.
\end{fact}
\begin{proof}[Proof of Theorem~\ref{thm:CovHalfIntegral}]
Suppose that $\frakc\in\fraccoverpolytope(W)$ is not half-integral. Consider the sets $\Omega_A=\{x\in\Omega:0\le\frakc(x)\le\frac12\}$ and $\Omega_B=\{x\in\Omega:\frac12<\frakc(x)\le1\}$. Using the notation from Fact~\ref{fact:perturbehalf}, define two functions $\frakc',\frakc'':\Omega_A\sqcup\Omega_B\rightarrow \bbR$ by $\frakc'(a)=\frakc(a)^{+\bullet}$, $\frakc'(b)=\frakc(b)^{-\bullet}$, $\frakc''(a)=\frakc(a)^{-\bullet}$, $\frakc''(b)=\frakc(b)^{+\bullet}$, for each $a\in\Omega_A$ and $b\in\Omega_B$. By Fact~\ref{fact:perturbehalf}, we have that $\frakc',\frakc''\in\fraccoverpolytope(W)$. Further $\frakc=\frac12(\frakc'+\frakc'')$. As $\frakc$ is not half-integral, we have that $\frakc$ is distinct from $\frakc'$ and $\frakc''$. We conclude that $\frakc$ is not an extreme point of $\fraccoverpolytope(W)$.
\end{proof}

\subsection{Proof of Theorem~\ref{thm:CovIntegralImpliesBipartite}}

\begin{proof}[Proof of Theorem~\ref{thm:CovIntegralImpliesBipartite}]
We shall prove the counterpositive. Suppose that $W$ is not bipartite. Let $\coverpolytope(W)$ be the closure (in the weak$^*$ topology) of the convex hull of all integral vertex covers of $W$. Clearly, we have $\coverpolytope(W)\subset\fraccoverpolytope(W)$, and each integral vertex cover of $\fraccoverpolytope(W)$ is contained in $\coverpolytope(W)$. Below, we shall show that  
\begin{equation}\label{eq:fracVSIntNonempty}
\fraccoverpolytope(W)\setminus \coverpolytope(W)\neq\emptyset\;.
\end{equation}
The Krein--Milman Theorem (see Section~\ref{sssec:KreinMilman}) then tells us that $\calE\big(\fraccoverpolytope(W)\big)\setminus \coverpolytope(W)\neq\emptyset$. It will thus follow that there exists a non-integral fractional vertex cover in $\calE\big(\fraccoverpolytope(W)\big)$, as was needed to show.

Take $\frakc:\Omega\rightarrow [0,1]$ to be constant $\frac12$. Clearly, $\frakc\in\fraccoverpolytope(W)$. In order to show~\eqref{eq:fracVSIntNonempty}, it suffices to prove that $\frakc\notin\coverpolytope(W)$. Let $k$ be the odd integer given by Lemma~\ref{lem:oddcycles}. Let $\epsilon=\frac{1}{32k^2}$, and let the sets $A_1,\ldots,A_k$  of measure $\alpha>0$ be given by Lemma~\ref{lem:oddcycles}. 

In order to prove that $\frakc$ is not in the weak$^*$ closure of the convex hull of integral vertex covers, consider an arbitrary $\ell$-tuple of integral vertex covers $\frakc_1,\ldots,\frakc_\ell$ of $W$, and numbers $\gamma_1,\ldots,\gamma_\ell\ge 0$ with $\sum \gamma_i=1$.

Consider an arbitrary $i\in[\ell]$. We say that $\frakc_i$ \emph{marks the set $A_h$} (where $h\in[k]$) if $\frakc_i$ restricted to $A_h$ attains the value~0 on a set of measure at most $\frac{\alpha}{4k}$. Put equivalently, $\frakc_i$ marks the set $A_h$ if $\frakc_i$ restricted to $A_h$ attains the value~1 on a set of measure at least  $\frac{\alpha(4k-1)}{4k}$.
\begin{claim}\label{cl:mark}
For each $i\in[\ell]$, the vertex cover $\frakc_i$ marks at least $\frac{k+1}{2}$ many of the sets $A_1,\ldots,A_k$.
\end{claim}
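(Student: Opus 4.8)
The plan is to show that if some $\frakc_i$ marked fewer than $\frac{k+1}{2}$ of the sets, then it would fail to be a valid integral vertex cover of $W$, contradicting the hypothesis. Since $k$ is odd, $\frac{k+1}{2}$ is an integer and $k - \frac{k+1}{2} = \frac{k-1}{2}$, so assuming $\frakc_i$ marks at most $\frac{k-1}{2}$ sets means it fails to mark at least $\frac{k+1}{2}$ sets. Among the $k$ cyclically-arranged sets $A_1, \ldots, A_k$ (indices mod $k$), if at least $\frac{k+1}{2}$ of them are unmarked, then by a pigeonhole / parity argument on the cycle $C_k$ there must exist two consecutive sets $A_h, A_{h+1}$ that are both unmarked (if no two consecutive sets were both unmarked, the unmarked sets would form an independent set in $C_k$, which has independence number $\frac{k-1}{2} < \frac{k+1}{2}$, a contradiction).

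Next I would exploit the near-completeness of $W$ between consecutive sets provided by Lemma~\ref{lem:oddcycles}: $W$ is positive on all of $A_h \times A_{h+1}$ except a set of measure at most $\epsilon \alpha^2 = \frac{\alpha^2}{32k^2}$. Since $\frakc_i$ does not mark $A_h$, the set $Z_h := \{x \in A_h : \frakc_i(x) = 0\}$ has measure greater than $\frac{\alpha}{4k}$; similarly $Z_{h+1} := \{y \in A_{h+1} : \frakc_i(y) = 0\}$ has measure greater than $\frac{\alpha}{4k}$. Then $\nu^{\otimes 2}(Z_h \times Z_{h+1}) > \frac{\alpha^2}{16k^2}$, which strictly exceeds the $\frac{\alpha^2}{32k^2}$ bound on where $W$ can vanish on $A_h \times A_{h+1}$. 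Hence there is a point $(x,y) \in Z_h \times Z_{h+1}$ with $W(x,y) > 0$; but $\frakc_i(x) + \frakc_i(y) = 0 < 1$, contradicting the definition of an integral (hence fractional) vertex cover (Definition~\ref{def:cover}), since this would only be permissible on a null subset of $\SUPPORT W$ and here we have a positive-measure violation. (To be careful, I would phrase it as: the set of pairs in $A_h \times A_{h+1}$ where $W > 0$ but $\frakc_i(x) + \frakc_i(y) < 1$ contains $\{(x,y) \in Z_h \times Z_{h+1} : W(x,y) > 0\}$, which has positive measure, contradicting that $\frakc_i \in \fraccoverpolytope(W)$.)

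The main obstacle — though really a bookkeeping point — is getting the independence-number argument on $C_k$ stated cleanly: one must check that $k$ sets on a cycle with more than $\frac{k-1}{2}$ of them "bad" forces two adjacent bad ones, which is exactly $\alpha(C_k) = \frac{k-1}{2}$ for odd $k$. The other thing to watch is that the constant $\epsilon = \frac{1}{32k^2}$ and the threshold $\frac{\alpha}{4k}$ in the definition of "marks" are tuned precisely so that $\left(\frac{\alpha}{4k}\right)^2 = \frac{\alpha^2}{16k^2} > \frac{\alpha^2}{32k^2} = \epsilon\alpha^2$, leaving slack by a factor of $2$; I would make sure the inequalities are strict where needed. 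Everything else is routine: Fubini is not even needed here, just monotonicity of measure and the product-measure bound $\nu^{\otimes 2}(Z_h \times Z_{h+1}) = \nu(Z_h)\nu(Z_{h+1})$.
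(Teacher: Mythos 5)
Your proof is correct and follows the same route as the paper's: find two consecutive unmarked sets via the parity/independence-number argument on the odd cycle, then compare the measure of $Z_h\times Z_{h+1}$ (more than $\frac{\alpha^2}{16k^2}$) with the $\epsilon\alpha^2=\frac{\alpha^2}{32k^2}$ exceptional set from Lemma~\ref{lem:oddcycles} to get a positive-measure violation of the cover condition. The paper states these two steps more tersely, but the content is identical.
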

\begin{proof}[Proof of Claim~\ref{cl:mark}]
Suppose that this is not the case. Recall that $k$ is odd. We can find an index $h\in[k]$ that $A_h$ and $A_{h+1}$ are not marked (again, using the cyclic notation $A_{k+1}=A_k$). Therefore, the $\frakc_i$-preimages  $B_h\subset A_h$ and $B_{h+1}\subset A_{h+1}$ of~0 have both measures more than $\frac{\alpha}{4k}$. It follows from Lemma~\ref{lem:oddcycles} and the way we set $\epsilon$ that $W$ is positive on a set of positive measure on $B_h\times B_{h+1}$. This contradicts the fact that $\frakc_i$ is a vertex cover.
\end{proof}
Let us write $A=\bigcup_h A_h$. It follows from Claim~\ref{cl:mark} that 
\begin{equation}\label{eq:vyuzijmarky}
\int_A \frakc_i \ge \frac{k+1}2\cdot \frac{\alpha(4k-1)}{4k}\ge \alpha\left(\frac{k}2+\frac{1}{6}\right)\;.
\end{equation}
By convexity, we can replace $\int_A\frakc_i$ by $\int_A\left(\sum_i\gamma_i\frakc_i\right)$ in~\eqref{eq:vyuzijmarky}.

We now have
\begin{equation}\label{eq:integralNaA}
\begin{split}
\int_A\left(\sum_i \gamma_i\frakc_i(x) - \frakc (x)\right)\:\mathrm{d}x=&
\int_A\left(\sum_i \gamma_i\frakc_i(x)\right)\:\mathrm{d}x - \int_A\frakc (x)\:\mathrm{d}x\\
\geByRef{eq:vyuzijmarky}&\alpha\left(\frac{k}2+\frac{1}{6}\right)-k\alpha\cdot\frac12=\frac{\alpha}{6}
\;.
\end{split}
\end{equation}
Since neither the set $A$ nor the bound on the right-hand side of~(\ref{eq:integralNaA}) depend on the choice of the number $\ell$, the vertex covers $\frakc_i$, and the constants $\gamma_i$, we get that $\frakc$ is not in the weak$^*$ closure of convex combinations of integral vertex covers, as was needed.
\end{proof}

\subsection{An application: the Erd\H os--Gallai Theorem}
In this section, we prove a graphon counterpart to the following classical result of Erd\H os and Gallai, \cite{Erdos1959}.
\begin{theorem}[Erd\H os--Gallai, 1959]\label{thm:ErdGall}
Suppose that $n$ and $\ell$ are positive integers that satisfy $\ell\le n/2$. Then
any $n$-vertex graph with more than $\max\{(\ell-1)(n-\ell+1)+{\ell-1\choose 2},{2\ell-1\choose 2}\}$ edges contains a matching with at least $\ell$ edges.
\end{theorem}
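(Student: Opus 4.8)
The plan is to prove the contrapositive of Theorem~\ref{thm:ErdGall}: if $G$ is an $n$-vertex graph in which every matching has at most $\ell-1$ edges, then $e(G)\le\max\{(\ell-1)(n-\ell+1)+\binom{\ell-1}{2},\binom{2\ell-1}{2}\}$. Write $\nu(G)$ for the largest number of edges in a matching of $G$, so the hypothesis is $\nu(G)\le\ell-1$; the assumption $\ell\le n/2$ only serves to keep the quantities below in range (it gives $n-2\nu(G)\ge 2$).

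\textbf{Step 1: structure via Berge--Tutte.} I would invoke the Berge--Tutte formula: the deficiency $n-2\nu(G)$ equals $\max_{S\subseteq V(G)}\bigl(\mathrm{odd}(G-S)-|S|\bigr)$, where $\mathrm{odd}(H)$ denotes the number of connected components of $H$ having an odd number of vertices. Fix a set $S$ attaining this maximum and put $s:=|S|$. Then $\mathrm{odd}(G-S)=s+n-2\nu(G)\ge s+n-2\ell+2=:q$, so $G-S$ has at least $q$ components; as $G-S$ lives on $n-s$ vertices, this forces $q\le n-s$, i.e.\ $s\le\ell-1$.

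\textbf{Step 2: counting edges by location.} Each edge of $G$ lies inside $S$, between $S$ and $V(G)\setminus S$, or inside a single component of $G-S$ (no edge joins two distinct components). Writing $n_1,n_2,\dots$ for the orders of the components of $G-S$, this gives
\[
e(G)\le\binom{s}{2}+s(n-s)+\sum_i\binom{n_i}{2}.
\]
The $n_i$ are positive integers summing to $n-s$ with at least $q$ terms; since $x\mapsto\binom{x}{2}$ is convex, the sum $\sum_i\binom{n_i}{2}$ is largest when one part is as big as possible and the remaining parts are $1$, and increasing the number of parts can only decrease it, so $\sum_i\binom{n_i}{2}\le\binom{(n-s)-q+1}{2}=\binom{2\ell-1-2s}{2}$ (here $2\ell-1-2s\ge1$ by Step~1). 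Hence $e(G)\le f(s)$ where
\[
f(s):=\binom{s}{2}+s(n-s)+\binom{2\ell-1-2s}{2},\qquad s\in\{0,1,\dots,\ell-1\}.
\]

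\textbf{Step 3: a one-variable optimisation.} A direct computation shows $f$ is a quadratic in $s$ with positive leading coefficient $\tfrac{1}{2}-1+2=\tfrac{3}{2}$ (equivalently its second difference is $3$), hence convex, so $\max_{0\le s\le\ell-1}f(s)=\max\{f(0),f(\ell-1)\}$. Since $f(0)=\binom{2\ell-1}{2}$ and $f(\ell-1)=\binom{\ell-1}{2}+(\ell-1)(n-\ell+1)$ (using $\binom{1}{2}=0$), this is exactly the asserted bound, and the two endpoints match the two extremal graphs: $K_{2\ell-1}$ together with $n-2\ell+1$ isolated vertices, and $\ell-1$ vertices completely joined to an independent set of size $n-\ell+1$. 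The real content is Step~1: once the Berge--Tutte formula is available, the extremal problem collapses to the convex optimisation of $f$, and Steps~2--3 are routine; the only points needing care are the maximisation of $\sum_i\binom{n_i}{2}$ --- this is where $\nu(G)$ enters, through the lower bound $q$ on the number of components --- and the convexity of $f$, neither of which is a genuine obstacle. If one prefers to avoid citing Berge--Tutte, the natural substitute is an induction on $n$ (delete an isolated vertex if there is one; otherwise delete a minimum-degree vertex together with a neighbour, which lowers $\ell$ by one), but then reconciling the two terms of the maximum through the induction becomes delicate bookkeeping, and I would regard that as the main difficulty of that approach.
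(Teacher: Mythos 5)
Your argument is correct. Berge--Tutte gives a set $S$ with $s=|S|\le\ell-1$ such that $G-S$ has at least $q=s+n-2\ell+2$ components; the location count $e(G)\le\binom{s}{2}+s(n-s)+\sum_i\binom{n_i}{2}$ together with convexity of $x\mapsto\binom{x}{2}$ (under the constraints that the $n_i$ are positive integers summing to $n-s$ with at least $q$ of them) yields $e(G)\le f(s)$ with $f(s)=\binom{s}{2}+s(n-s)+\binom{2\ell-1-2s}{2}$; and $f$ is a quadratic in $s$ with leading coefficient $\tfrac12-1+2=\tfrac32>0$, so its maximum over $\{0,\dots,\ell-1\}$ is at an endpoint, and $f(0)$, $f(\ell-1)$ are exactly the two terms of the stated bound. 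The only (cosmetic) slip is in your description of the second extremal graph: to realise $f(\ell-1)$ one must also fill in the clique on the $\ell-1$ vertices, i.e.\ it is $K_{\ell-1}$ completely joined to an independent set of size $n-\ell+1$, which is the paper's $\mathrm{ExG}(n,\ell)$.

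This is, however, a genuinely different route from the paper --- indeed the paper does not prove Theorem~\ref{thm:ErdGall} at all. It quotes it as the classical 1959 result of Erd\H{o}s and Gallai, remarks that ``the original proof \dots\ is simple and elementary,'' and instead proves a graphon counterpart (Theorem~\ref{thm:EGgraphon}): using the half-integrality of extreme points of $\fraccoverpolytope(W)$ (Theorem~\ref{thm:CovHalfIntegral}) one gets a partition $\Omega=A_0\sqcup A_{1/2}\sqcup A_1$ from an optimal half-integral cover, bounds the edge density by $\alpha_{1/2}^2+2\alpha_1-\alpha_1^2$, and optimises this one-variable problem (Fact~\ref{fact:maxEG}) --- a computation structurally parallel to your Step~3, with the cover playing the role of your Berge--Tutte set $S$. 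From the graphon statement the paper then derives only the asymptotic finite version with a stability clause (Theorem~\ref{thm:EGEG}), losing $\epsilon n$ in the matching size. So the two approaches buy different things: yours gives the exact finite threshold but no stability; the paper's LP-duality/graphon route gives stability and a structural characterisation of near-extremal graphs, but recovers the finite theorem only up to $o(n)$ error.
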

The bound in Theorem~\ref{thm:ErdGall} is optimal. Indeed, when $\ell\le 0.4(n+1)$, the \emph{extremal graph} (denoted by $\textrm{ExG}(n,\ell)$) is the complete graph $K_{\ell-1,n-\ell+1}$ together with the complete graph inserted into the $(\ell-1)$-part. When $\ell\ge 0.4(n+1)$, the extremal graph is the complete graph of order $2\ell-1$ with $n+1-2\ell$ isolated vertices padded. That is, the extremal graph undergoes a transition at edge density (asymptotically) 0.64.

Motivated by this, for $e\in[0,1]$ we define graphons $\Psi_{e}$ and $\Phi_{e}$ as follows. We partition $\Omega=B_1\sqcup B_2$ so that $\nu(B_1)=1-\sqrt{1-e}$ and $\nu(B_2)=\sqrt{1-e}$. We define $\Psi_{e}$ to be constant~0 on $B_2\times B_2$ and~1 elsewhere. We partition $\Omega=C_1\sqcup C_2$ so that $\nu(C_1)=e^2$ and $\nu(C_2)=1-e^2$. We define $\Phi_{e}$ to be constant~1 on $C_1\times C_1$ and~0 elsewhere. These definition uniquely determine $\Psi_{e}$ and $\Phi_{e}$, up to isomorphism. Thus, our graphon version of the Erd\H os--Gallai theorem reads as follows.
\begin{theorem}\label{thm:EGgraphon}
Suppose that $W:\Omega^2\rightarrow[0,1]$ is a graphon. Let $e= \int_{x,y} W(x,y)$. Then $\fraccoverratio(W)\ge \min\left\{\sqrt{\frac 14e},1-\sqrt{1-e}\right\}$. (The maximum is attained by the former term for $e\le 0.64$ and by the latter term for $e\ge 0.64$).

Furthermore, we have an equality if and only if $W$ is isomorphic to $\Psi_{e}$ (if $e\le 0.64$) or to $\Phi_{e}$ (if $e\ge 0.64$).
\end{theorem}
This version of the Erd\H os--Gallai Theorem implies an asymptotic version of the finite statement. Furthermore, it provides a corresponding stability statement.
\begin{theorem}\label{thm:EGEG}
For every $\epsilon>0$ there exists numbers $n_0\in\mathbb N$ and $\delta>0$ such that the following holds. If $G$ is a graph on $n>n_0$ vertices with more than
$\max\{(\ell-1)(n-\ell+1)+{\ell-1\choose 2},{2\ell-1\choose 2}\}$ edges, then $G$ contains a matching with at least $\ell-\epsilon n$ edges.
Furthermore, $G$ contains a matching with at least $\ell+\delta n$ edges, unless $G$ is $\epsilon n^2$-close to the graph $\mathrm{ExG}(n,\ell)$ as above in the edit distance.
\end{theorem}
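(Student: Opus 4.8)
The plan is to derive both assertions from the graphon statement Theorem~\ref{thm:EGgraphon} by a compactness argument. Assume a conclusion fails. Then there is a fixed $\epsilon>0$ and graphs $G_j$ on $n_j\to\infty$ vertices with $\ell_j\le n_j/2$ and $|E(G_j)|>\max\{(\ell_j-1)(n_j-\ell_j+1)+\binom{\ell_j-1}{2},\binom{2\ell_j-1}{2}\}$, such that either $\nu(G_j)<\ell_j-\epsilon n_j$ (first case), or $\nu(G_j)<\ell_j+\delta_j n_j$ with $\delta_j\to0$ while $G_j$ is $\epsilon n_j^2$-far from $\mathrm{ExG}(n_j,\ell_j)$ in edit distance (second case); here $\nu$ denotes the matching number. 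Passing to a subsequence I would arrange $\ell_j/n_j\to\lambda\in[0,\tfrac{1}{2}]$ and $W_{G_j}\to W$ in the cut-distance. Writing $e=\int_{x,y}W$, cut-continuity of the edge density together with the edge hypothesis give, in the limit, $e\ge\max\{2\lambda-\lambda^2,4\lambda^2\}$. The case $\lambda=0$ I would treat separately by an elementary argument, so from now on $\lambda>0$, which forces the $G_j$ to be dense.

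Let $g(e)=\min\{\sqrt{e/4},\,1-\sqrt{1-e}\}$ be the lower bound of Theorem~\ref{thm:EGgraphon}; it is continuous and strictly increasing on $[0,1]$, and a one-line computation in the two regimes $\lambda\le\tfrac{2}{5}$ and $\lambda\ge\tfrac{2}{5}$ gives $g\big(\max\{2\lambda-\lambda^2,4\lambda^2\}\big)=\lambda$. Hence $\fraccoverratio(W)\ge g(e)\ge\lambda$, and by the Hladk\'y--Hu--Piguet LP-duality $\matchratio(W)=\fraccoverratio(W)\ge\lambda$. By the relation between $\fraccoverpolytope(W_n)$ and $\fraccoverpolytope(W)$ for convergent $(W_n)$ from \cite{HlHuPi:TilingsInGraphons} (which yields lower semicontinuity of $\fraccoverratio$), together with $\fraccoverratio(W_{G_j})=\nu^{*}(G_j)/n_j$ for the fractional matching number $\nu^{*}$, this passes to the graphs as $\liminf_j\nu^{*}(G_j)/n_j\ge\lambda$. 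To get from $\nu^{*}$ to $\nu$ I would use that the integrality gap $\nu^{*}(G)-\nu(G)$ is governed by the Gallai--Edmonds structure of $G$, and that for graphs meeting the Erd\H os--Gallai bound it is $o(n_j)$ unless $G_j$ is already edit-close to $\mathrm{ExG}(n_j,\ell_j)$ --- this is the one genuinely combinatorial ingredient. Then $\liminf_j\nu(G_j)/n_j\ge\lambda$, which in the first case contradicts $\nu(G_j)<\ell_j-\epsilon n_j$; this settles the first assertion. (The first assertion is of course also immediate from the finite Erd\H os--Gallai theorem, Theorem~\ref{thm:ErdGall}; I derive it from the graphon version only to illustrate the method.)

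For the stability assertion, the hypothesis $\nu(G_j)<\ell_j+\delta_j n_j$ gives $\limsup_j\nu(G_j)/n_j\le\lambda$, hence $\nu(G_j)/n_j\to\lambda$ and, by the gap estimate, $\nu^{*}(G_j)/n_j\to\lambda=\matchratio(W)=\fraccoverratio(W)$. Combined with $\fraccoverratio(W)\ge g(e)\ge\lambda$ and the strict monotonicity of $g$, this forces $g(e)=\lambda$, so $e=\max\{2\lambda-\lambda^2,4\lambda^2\}$ and equality holds in Theorem~\ref{thm:EGgraphon}; the equality case then yields $W\cong\Psi_e$ if $\lambda\le\tfrac{2}{5}$ and $W\cong\Phi_e$ if $\lambda\ge\tfrac{2}{5}$, a $\{0,1\}$-valued step graphon with at most two steps, of which $\mathrm{ExG}(n_j,\ell_j)$ is the appropriate blow-up (its parts having sizes $\lambda n_j+o(n_j)$). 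It remains to upgrade cut-closeness to edit-closeness, using the elementary observation that if $Z$ is a $\{0,1\}$-valued $k$-step graphon and $U$ any $\{0,1\}$-valued graphon with $\dist_\square(U,Z)\le\eta$, then $\inf_\phi\|U-Z^\phi\|_1\le k^2\eta$: after aligning so that $\|U-Z^\phi\|_\square$ is close to $\eta$, on each of the $k^2$ blocks $B$ the function $Z^\phi$ is constant, so $\int_B|U-Z^\phi|=\big|\int_B(U-Z^\phi)\big|\le\|U-Z^\phi\|_\square$. Applied with $Z=W$, $U=W_{G_j}$, and using that $W_{\mathrm{ExG}(n_j,\ell_j)}$ is $L^1$-close to $W$ after a suitable alignment, this shows $G_j$ is $o(n_j^2)$-close to $\mathrm{ExG}(n_j,\ell_j)$ in edit distance, the desired contradiction.

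The step I expect to be the real obstacle is the control of the matching integrality gap $\nu^{*}(G_j)-\nu(G_j)$ under the Erd\H os--Gallai edge hypothesis --- equivalently, showing that a graph meeting that bound cannot have $\Omega(n)$ pairwise-nonadjacent factor-critical components of order $\ge3$ unless it is already edit-close to $\mathrm{ExG}(n,\ell)$; a Gallai--Edmonds analysis combined with edge-counting should handle this, but it is the only part of the proof that does not reduce to soft functional-analytic input (weak$^*$-compactness of the cover polyton, LP-duality, the transference statements of \cite{HlHuPi:TilingsInGraphons}) or routine estimates. By contrast, the cut-to-edit passage at the end is painless precisely because the two extremal graphons are $\{0,1\}$-valued with boundedly many steps; for a general limit graphon, cut-closeness does not imply $L^1$-closeness.
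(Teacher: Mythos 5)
Your compactness scheme (pass to a subsequential cut-limit $W$, apply Theorem~\ref{thm:EGgraphon} at the graphon level, then pull the conclusion back to the graphs) is the right template, and your cut-to-edit argument at the end --- exploiting that $\Psi_e$ and $\Phi_e$ are $\{0,1\}$-valued two-step graphons, so that block-wise cut-closeness gives $L^1$-closeness --- is correct and is precisely what makes the stability claim transferable. Note that the paper itself does not give a proof here: it only refers to~\cite{HlHuPi:Komlos}, where this kind of transference is worked out for Koml\'os's tiling theorem, and your outline has the same overall shape.

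The genuine gap is in the passage from the fractional quantity $\nu^*$ to the integral one $\nu$. You reach $\liminf_j \nu^{*}(G_j)/n_j\ge\fraccoverratio(W)$ via weak$^*$ lower semicontinuity and then propose to close the $\nu^{*}-\nu$ gap by a Gallai--Edmonds analysis that you leave unproved; as you say yourself this would be the one step that is not soft, and the dichotomy you state (``gap $o(n)$ unless $G_j$ is already edit-close'') would require a separate combinatorial argument that does essentially the theorem's work over again. The ingredient you should invoke instead is the transference result of~\cite{HlHuPi:TilingsInGraphons} for the \emph{integral} matching number: if $W_{G_j}\to W$ in cut distance then $\liminf_j\nu(G_j)/n_j\ge\matchratio(W)$. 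This is proven there (for general $F$-tilings) by a regularity/rounding argument: a graphon matching in $W$ is approximated by a step function, giving a weighted fractional matching on a reduced graph, and one then extracts across the dense regular pairs an integral matching of essentially the same size; the finite-graph integrality gap (up to $\nu^{*}\le\tfrac32\nu$) simply disappears in the blow-up, so no Gallai--Edmonds structure is required. With this lemma, your stability argument closes without detour: the hypothesis gives $\nu(G_j)/n_j\to\lambda$, hence $\matchratio(W)\le\lambda$; combined with $\fraccoverratio(W)=\matchratio(W)\ge g(e)\ge\lambda$ from Theorem~\ref{thm:EGgraphon}, this forces equality, identifies $W$ with $\Psi_e$ or $\Phi_e$, and your cut-to-edit step finishes the proof.
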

The way of deriving Theorem~\ref{thm:EGEG} from Theorem~\ref{thm:EGgraphon} is standard, and we refer the reader to~\cite{HlHuPi:Komlos} where this was done in detail in the context of a tiling theorem of Koml\'os,~\cite{Komlos2000}, which is a statement of a similar flavor. 

Let us emphasize that the original proof of Theorem~\ref{thm:ErdGall} is simple and elementary (and the corresponding stability statement would not be difficult to prove with the same approach either). While our proof is not long, it makes use of the heavy machinery of graph limits, and in particular the results from~\cite{HlHuPi:TilingsInGraphons} and from Section~\ref{sub:graphonintegrality}. We think that our proof offers an interesting alternative point of view on the problem.

\medskip

For the proof of Theorem~\ref{thm:EGgraphon} we shall need the following fact.
\begin{fact}\label{fact:maxEG}
Suppose that $D\in[0,\frac12]$ is fixed. Then the maximum of the function $g(a,b)=a^2+2b-b^2$ on the set $\{(a,b): a\ge 0,b\ge 0,\frac{a}2+b=D\}$ is attained
for $a=0,b=D$ if $D\le 0.4$ and $a=2D,b=0$ if $D\ge 0.4$.
\end{fact}
\begin{proof}
We transform this into an optimization problem in one variable by considering the function $h(b)=g(2(D-b),b)$. The function $h$ is quadratic with limit plus infinity at $-\infty$ and at $+\infty$. Thus, the maximum of $h$ on the interval $[0,D]$ will be either at $b=0$ or at $b=D$. We have $h(0)=4D^2$ and $h(D)=2D-D^2$. A quick calculation gives that the latter is bigger for $D<0.4$ while the the latter is bigger for $D>0.4$.
\end{proof}
\begin{proof}[Proof of Theorem~\ref{thm:EGgraphon}]
By Theorem~\ref{thm:CovHalfIntegral} we can fix a half-integral vertex cover of size $\fraccoverratio(W)$. Then we get a partition $\Omega=A_0\sqcup A_{1/2} \sqcup A_1$ given by the preimages of 0, $1/2$ and 1. Let us write $\alpha_0=\nu(A_0)$, $\alpha_{1/2}=\nu(A_{1/2})$, and $\alpha_1=\nu(A_1)$. We have $\fraccoverratio(W)=\frac12\nu(A_{1/2})+\nu(A_{1})$. Note that $W_{\restriction A_0\times(A_0\cup A_{1/2})}=0$. Therefore,
\begin{align}
\begin{split}\label{eq:getback}
e\le \alpha_{1/2}^2+\alpha_1(\alpha_1+2\alpha_{1/2}+2\alpha_0)&=\alpha^2_{1/2}+2\alpha_1-\alpha_1^2\\
&\le \max\left\{4\fraccoverratio(W)^2,2\fraccoverratio(W)-\fraccoverratio(W)^2\right\}\;,
\end{split}
\end{align}
where the last part follows from Fact~\ref{fact:maxEG}. 
Pedestrian calculations show that this is equivalent to the assertion of the theorem.

We now look at the furthermore part. If $e= \max\{ 4\fraccoverratio(W)^2,2\fraccoverratio(W)-\fraccoverratio(W)^2\}$, then the first inequality in~\eqref{eq:getback} is at equality. This means that $W$ must be~1 on $A_{1/2}\times A_{1/2}$ and on $A_1\times \Omega$. Furthermore, Fact~\ref{fact:maxEG} tells us that for the second inequality in~\eqref{eq:getback} to be at equality, we must have $\alpha_{1/2}=0$ or $\alpha_1=1$. We conclude that $W$ is isomorphic to $\Psi_e$ or to $\Phi_e$.
\end{proof}

\section{Convergence of polytons}\label{sec:matchingpolytonconvergent}
\subsection{Fractional vertex cover polytons of a convergent graphon sequence}\label{sub:coverpolytonconvergent}
Suppose that a sequence of graphons $(W_n)_n$ converges to a graphon $W$ in the cut-norm. We want to relate the polytons $\fraccoverpolytope(W_n)$ to the polyton $\fraccoverpolytope(W)$. First observe that in general, the polytons $\fraccoverpolytope(W_n)$ do not converge to $\fraccoverpolytope(W)$ in any reasonable sense. Indeed, for example, take $W_n$ to be a representation of a sample of the Erd\H{o}s--R\'enyi random graph $\mathbb G(2n,1/\sqrt{n})$. It is well-known that almost surely almost all these graphs contain a perfect matching.\footnote{\label{f:almostallperfect}By saying that ``almost all these graphs contain a perfect matching'' we mean that with probability~1 (in the product probability space $\prod_n \mathbb G(2n,1/\sqrt{n})$), after sampling the graphs $G_{2n}\sim \mathbb G(2n,1/\sqrt{n})$, we can find a finite set $S\subset \mathbb{N}$ such that for each $n\in \mathbb N\setminus S$, the graph $G_{2n}$ contains a perfect matching. We provide a proof of this in Appendix~\ref{app:almostallperfect}.} Thus, $\fraccoverpolytope(W_n)$ contain only fractional vertex covers of size $\frac12$ and more for almost all $n$. On the other hand, almost surely, the zero graphon $W=0$ is the cut-distance limit of $(W_n)_n$, and so $\fraccoverpolytope(W)$ consists of all $[0,1]$-valued measurable functions on $\Omega$.

However, Theorem~\ref{thm:convergencepolytons} below shows that ``$\fraccoverpolytope(W)$ asymptotically contains the polytons $\fraccoverpolytope(W_n)$''. This theorem is a special case of~\cite[Theorem~3.14]{HlHuPi:TilingsInGraphons}.
\begin{theorem}\label{thm:convergencepolytons}
Suppose that $(W_n)_n$ is a sequence of graphons on $\Omega$ that converges to a graphon $W:\Omega^2\rightarrow[0,1]$ in the cut-norm. Suppose that $\frakc_n\in\fraccoverpolytope(W_n)$. Then any accumulation point of the sequence $(\frakc_n)_n$ in the weak$^*$ topology lies in $\fraccoverpolytope(W)$.
\end{theorem}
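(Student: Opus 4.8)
The plan is as follows. Since each $\frakc_n$ lies in the closed unit ball of $\LpSpace^\infty(\Omega)=\bigl(\LpSpace^1(\Omega)\bigr)^*$, which is weak$^*$-compact by Banach--Alaoglu and weak$^*$-metrizable because $\LpSpace^1(\Omega)$ is separable, every accumulation point $\frakc$ of $(\frakc_n)_n$ is the weak$^*$ limit of a subsequence $(\frakc_{n_k})_k$ (alternatively one runs the argument below along a subnet). It remains to check the two defining properties of a fractional vertex cover of $W$: (a) $0\le\frakc\le 1$ almost everywhere, and (b) the support condition, which is equivalent to $W=0$ a.e.\ on $G:=\{(x,y):\frakc(x)+\frakc(y)<1\}$. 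Property (a) is immediate: for each measurable $S\subset\Omega$ the sets $\{f:\int_S f\ge 0\}$ and $\{f:\int_S(1-f)\ge 0\}$ are weak$^*$-closed half-spaces (each is a condition of pairing against the fixed $\LpSpace^1$-function $\mathbbm{1}_S$), so their intersection over all $S$, namely $\{f:0\le f\le 1\text{ a.e.}\}$, is weak$^*$-closed and hence contains $\frakc$.

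The substance is (b). First I would reduce it to a countable family of rectangles. Writing $P_p:=\{\frakc\le p\}$ and $Q_q:=\{\frakc\le q\}$, one has $G=\bigcup\bigl\{P_p\times Q_q:\ p,q\in\mathbb{Q}_{\ge 0},\ p+q<1\bigr\}$, so it suffices to prove $\int_{P_p\times Q_q}W=0$ for each admissible pair $(p,q)$. Fix such a pair, abbreviate $P=P_p$, $Q=Q_q$ (so that $\frakc(x)+\frakc(y)\le p+q<1$ throughout $P\times Q$), and set $d_n(x):=\int_Q W_n(x,y)\,\mathrm{d}y$ and $d(x):=\int_Q W(x,y)\,\mathrm{d}y$. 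The hypothesis $\frakc_n\in\fraccoverpolytope(W_n)$ in particular forces $W_n=0$ a.e.\ on $\{\frakc_n(x)+\frakc_n(y)<1\}$, i.e.\ $W_n(x,y)\bigl(1-\frakc_n(x)-\frakc_n(y)\bigr)\le 0$ for a.e.\ $(x,y)$; integrating over $P\times Q$,
\[
\int_{P\times Q}W_n(x,y)\bigl(1-\frakc_n(x)-\frakc_n(y)\bigr)\,\mathrm{d}x\,\mathrm{d}y\ \le\ 0\qquad\text{for every }n .
\]
I then pass to the limit along $(n_k)_k$. First, $\int_{P\times Q}W_{n_k}\to\int_{P\times Q}W$ by cut-norm convergence. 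Next, $\int_{P\times Q}W_{n_k}(x,y)\frakc_{n_k}(x)\,\mathrm{d}x\,\mathrm{d}y=\langle\frakc_{n_k},\,\mathbbm{1}_P\, d_{n_k}\rangle$; here $\|d_{n_k}-d\|_{\LpSpace^1}\le 2\|W_{n_k}-W\|_\square\to 0$ (using $\int_S(d_n-d)=\int_{S\times Q}(W_n-W)$ for every measurable $S$), while $\frakc_{n_k}\to\frakc$ weak$^*$ with $\|\frakc_{n_k}\|_\infty\le 1$, so the bilinear pairing passes to the limit and $\langle\frakc_{n_k},\,\mathbbm{1}_P\, d_{n_k}\rangle\to\langle\frakc,\,\mathbbm{1}_P\, d\rangle=\int_{P\times Q}W(x,y)\frakc(x)\,\mathrm{d}x\,\mathrm{d}y$; the term with $\frakc(y)$ is symmetric. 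Hence, in the limit,
\[
\int_{P\times Q}W(x,y)\bigl(1-\frakc(x)-\frakc(y)\bigr)\,\mathrm{d}x\,\mathrm{d}y\ \le\ 0 .
\]
But on $P\times Q$ the integrand is $\ge(1-p-q)\,W(x,y)\ge 0$, so this last integral also lies in $\bigl[0,\infty\bigr)$; being $\le 0$, it is $0$, and since $1-p-q>0$ and $W\ge 0$ this gives $\int_{P\times Q}W=0$. Taking the union over the countably many admissible pairs $(p,q)$ establishes (b), and with (a) the theorem follows.

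The step I expect to be the main obstacle is the limit passage above, and it explains why the naive ``a weak$^*$ limit of vertex covers is a vertex cover'' argument fails: the sublevel sets of $\frakc_n$ do not depend weak$^*$-continuously on $n$, so the pointwise inequality $\frakc_n(x)+\frakc_n(y)\ge 1$ holding on $\SUPPORT W_n$ cannot be transported to the limit directly. The way around this is exactly to multiply the cover inequality by $W_n$ and integrate before taking limits: the resulting bilinear expression survives because the marginal functions $d_n$ converge to $d$ in $\LpSpace^1$-norm --- and this is the precise point where cut-norm convergence of $(W_n)_n$, rather than merely cut-distance convergence, is used --- so that pairing the weak$^*$-convergent $\frakc_{n_k}$ against the norm-convergent $\mathbbm{1}_P\, d_{n_k}$ passes to the limit. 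Restricting attention to a rectangle $P\times Q$ on which $\frakc$ stays bounded away from satisfying the covering constraint then forces $W$ to vanish there.
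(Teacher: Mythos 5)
Your proof is correct. One point to note up front: the paper does not actually include its own proof of this theorem---it states that the result is a special case of Theorem~3.14 in the cited paper of Hladk\'y, Hu, and Piguet---so a direct comparison of approaches is not possible within this source. That said, your argument is self-contained and sound. The two key ideas are (i) rewriting the a.e.\ constraint ``$W_n>0\Rightarrow\frakc_n(x)+\frakc_n(y)\ge1$'' as the integrated inequality $\int_{P\times Q}W_n\bigl(1-\frakc_n(x)-\frakc_n(y)\bigr)\le 0$, which is stable under the available limits, and (ii) the observation that the marginals $d_n(x)=\int_Q W_n(x,\cdot)$ converge to $d$ in $\LpSpace^1$-norm at rate $2\|W_n-W\|_\square$, which is exactly what is needed to pass the bilinear pairing $\langle\frakc_n,\mathbbm 1_P d_n\rangle$ to the limit against the merely weak$^*$-convergent $\frakc_n$. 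Restricting to the rational sublevel rectangles $P_p\times Q_q$ with $p+q<1$ then keeps the factor $1-\frakc(x)-\frakc(y)$ uniformly bounded below by $1-p-q>0$, forcing $\int_{P_p\times Q_q}W=0$; the countable union recovers $W=0$ a.e.\ on $\{\frakc(x)+\frakc(y)<1\}$. The reduction to subsequences via weak$^*$-metrizability of the unit ball (or a subnet in general) and the weak$^*$-closedness of $\{0\le f\le 1\}$ are both handled correctly. I see no gap.
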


\subsection{Matching polytons of a convergent graphon sequence}
The main new result of this section concerns convergence properties of the matching polytons. This result is dual to Theorem~\ref{thm:convergencepolytons}: if $W_n$ converges to $W$ in the cut-norm then ``$\matchingpolytope(W_n)$  asymptotically contain $\matchingpolytope(W)$''.
\begin{theorem}\label{thm:convergencematchings}
Suppose that $W:\Omega^2\rightarrow[0,1]$ is a graphon on a probability space $\Omega$, and let $\frak m\in\matchingpolytope(W)$ be fixed. Then for every $\varepsilon>0$ there is $\delta>0$ such that whenever $U\colon\Omega^2\rightarrow [0,1]$ is a graphon with $\|U-W\|_{\square}<\delta$ then there is $\frak m_U\in\matchingpolytope(U)$ such that $\|\frak m_U-\frak m\|_{\square}<\varepsilon$.
\end{theorem}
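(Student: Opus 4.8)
The plan is to replace $\frak m$ by a bounded step matching supported on cells of a Szemer\'edi-regular partition on which $W$ has density bounded away from $0$, then let $\frak m_U$ copy, on each such cell, a rescaled version of $U$ restricted to that cell, and finally clip the result to restore the degree inequality. I would begin by \textbf{truncating}: $\frak m^{M,\eta}:=\min(\frak m,M)\,\mathbbm 1[W\ge\eta]$ is again a matching in $W$ (the three conditions of Definition~\ref{def:matching} are preserved under pointwise decrease), and $\frak m^{M,\eta}\to\frak m$ in $\LpSpace^1$ as $M\to\infty$, $\eta\to0^{+}$ since $\SUPPORT\frak m\subseteq\SUPPORT W$; as $\|\cdot\|_\square\le\|\cdot\|_{\LpSpace^1}$ it suffices to treat an $\frak m$ with $0\le\frak m\le M$ and $\SUPPORT\frak m\subseteq\{W\ge\eta\}$. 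Then I would \textbf{discretise}: for small parameters $\theta,\epsilon_0>0$, use the regularity lemma for graphons~\cite{Lovasz2012}, applied so as to also refine a sufficiently fine auxiliary partition, to obtain a partition $\mathcal P=\{P_1,\dots,P_k\}$ that is $\epsilon_0$-regular for $W$ and fine enough that the cellwise average $\frak m_{\mathcal P}$ of $\frak m$ satisfies $\|\frak m-\frak m_{\mathcal P}\|_{\LpSpace^1}<\varepsilon/8$. The function $\frak m_{\mathcal P}$ is a step matching (nonnegative, degree sums $\le 1$ by the tower property) with cell values $m_{ij}\in[0,M]$; deleting the cells that are $\epsilon_0$-irregular for $W$ or have $W$-density $d_{ij}<\theta$ changes it by at most $M\epsilon_0+M\theta/\eta$ in $\LpSpace^1$ (a deleted low-density cell carries $\frak m$-mass $\le\tfrac M\eta\int_{P_i\times P_j}W<\tfrac{M\theta}{\eta}\nu(P_i)\nu(P_j)$). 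Call the surviving step matching $\tilde{\frak m}$.

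Now take $U$ with $\|U-W\|_\square<\delta$, where $\delta$ is fixed last, in terms of the already-chosen $M,\eta,\theta,\epsilon_0,\mathcal P$. Each surviving cell has $\int_{P_i\times P_j}U\ge\int_{P_i\times P_j}W-\delta\ge\tfrac\theta2\nu(P_i)\nu(P_j)$ once $\delta$ is small, so its $U$-density $d'_{ij}$ is $\ge\theta/2$, and, being $\epsilon_0$-regular for $W$, the cell is (comparing integrals over its sub-rectangles and using $\|U-W\|_\square<\delta$) $3\epsilon_0$-regular for $U$, hence $\|(U-U_{\mathcal P})\restriction_{P_i\times P_j}\|_\square\le3\epsilon_0\,\nu(P_i)\nu(P_j)$, where $U_{\mathcal P}$ is the cellwise average of $U$ (equal to $d'_{ij}$ on the $(i,j)$-cell). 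I then set
\begin{equation*}
\frak m_U^{\circ}:=\sum_{i,j\,:\,\tilde m_{ij}>0}\frac{\tilde m_{ij}}{d'_{ij}}\,U\cdot\mathbbm 1_{P_i\times P_j},
\end{equation*}
which is nonnegative, bounded by $2M/\theta$, supported in $\SUPPORT U$, and of the same total mass as $\tilde{\frak m}$ on each cell. Since $\frak m_U^{\circ}-\tilde m_{ij}\mathbbm 1_{P_i\times P_j}=\tfrac{\tilde m_{ij}}{d'_{ij}}(U-U_{\mathcal P})\restriction_{P_i\times P_j}$ on the $(i,j)$-cell, summing these cellwise bounds over an arbitrary rectangle yields
\begin{equation*}
\|\frak m_U^{\circ}-\tilde{\frak m}\|_\square\;\le\;\sum_{i,j}\frac{\tilde m_{ij}}{d'_{ij}}\cdot3\epsilon_0\,\nu(P_i)\nu(P_j)\;\le\;\frac{3\epsilon_0}{\theta}\;=:\;\sigma,
\end{equation*}
using $\tilde m_{ij}/d'_{ij}\le 2\tilde m_{ij}/\theta$ and $\sum\tilde m_{ij}\nu(P_i)\nu(P_j)=\|\tilde{\frak m}\|\le\tfrac12$; note that no factor of $k$ appears, so $\sigma$ is small whenever $\epsilon_0$ is.

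It remains to \textbf{repair the degree constraint}. Because $\frak m_U^{\circ}-\tilde{\frak m}$ integrates to $0$ and a one-sided marginal of a cut-small function is $\LpSpace^1$-small, the degree sums $\rho(x):=\int_y\frak m_U^{\circ}(x,y)+\int_y\frak m_U^{\circ}(y,x)$ obey $\int(\rho-1)^{+}\le4\sigma$, whereas those of $\tilde{\frak m}$ are $\le1$. Putting $t:=4\sqrt{(M/\theta)\sigma}$ and $B:=\{\rho>1+t\}$ (so $\nu(B)\le4\sigma/t$), the function $\frak m_U:=\tfrac1{1+t}\,\frak m_U^{\circ}\,\mathbbm 1[x\notin B]\,\mathbbm 1[y\notin B]$ lies in $\matchingpolytope(U)$ (nonnegativity and support are immediate, and its degree sum at $x$ is $\le\tfrac1{1+t}\rho(x)\mathbbm 1[x\notin B]\le1$), while $\|\frak m_U-\frak m_U^{\circ}\|_\square\le\tfrac t2\cdot\tfrac12+\tfrac{2M}\theta\nu(B)=O(\sqrt{(M/\theta)\sigma})$. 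Collecting the errors of the four steps and choosing, in this order, $M,\eta$, then $\theta$, then $\epsilon_0$ (hence $\mathcal P$), then $\delta$ small enough makes $\|\frak m_U-\frak m\|_\square<\varepsilon$; $\delta$ then depends only on $\varepsilon$, $W$, $\frak m$, as required.

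The conceptual heart is locating the right $\frak m_U$ at all: the naive candidate $\frak m\cdot\mathbbm 1[\SUPPORT U]$ does \emph{not} work — if $W\equiv\tfrac12$ and $U=1-\mathbbm 1_D$ with $D$ a quasirandom set of density $\tfrac12$, then $\|U-W\|_\square$ can be made arbitrarily small yet $\|\frak m\cdot\mathbbm 1[\SUPPORT U]-\frak m\|_\square$ stays bounded away from $0$ — so $\frak m_U$ must locally imitate the \emph{shape of $U$}, not that of $\frak m$. Making this quantitative forces the use of a Szemer\'edi-regular partition (so that the within-cell discrepancy scales with the cell measure and the $O(k^2)$ cellwise errors still sum to something small) rather than a mere $\LpSpace^1$ step-function approximation; this, together with the bookkeeping for the support condition under cellwise averaging and the terminal clipping for the degree inequality, is where the work lies, each individual estimate being routine.
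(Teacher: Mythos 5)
Your proof is correct, and its skeleton coincides with the paper's: truncate $\frak m$ from above, discretise into cells, transplant a cellwise-rescaled copy of $U$ weighted by the ratio of cell averages, and repair the degree constraint by deleting a small set of oversaturated vertices and rescaling by $1/(1+t)$. The one genuine divergence is the discretisation tool, and here your closing claim --- that controlling the within-cell discrepancy \emph{forces} a Szemer\'edi-regular partition rather than a mere $\LpSpace^1$ step-function approximation --- is mistaken. The paper uses exactly such an $\LpSpace^1$ approximation (Lemma~\ref{lem:approximation_by_means}): on all but a few cells one gets $\int_{\Omega_i\times\Omega_j}|W-W^{ij}|\le\eta/k^2=\eta\,\nu(\Omega_i)\nu(\Omega_j)$, which already scales with the cell measure (and is stronger than a per-cell cut-norm bound), while the only term genuinely controlled by the cut-norm hypothesis, namely $|\int_{(S\cap\Omega_i)\times(T\cap\Omega_j)}(U-W)|\le\delta$, is handled by choosing $\delta=\eta/k^2$ so that the $k^2$ cellwise contributions still sum to $\eta$. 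Since $\delta$ is chosen \emph{after} the partition, this dependence on $k$ is free --- and indeed your own argument already exploits it, since your transfer of $\epsilon_0$-regularity from $W$ to $U$ needs $\delta\lesssim\epsilon_0^3\nu(P_i)\nu(P_j)$. So the regularity lemma buys you the cosmetic remark that $\sigma=3\epsilon_0/\theta$ is $k$-free, at the cost of invoking much heavier machinery; the paper's route is more elementary and gives the same quantitative structure. The remaining differences are minor variants: you pre-truncate the support to $\{W\ge\eta\}$ and discard low-density cells, where the paper keeps $\frak m$ intact and instead proves (Claim~\ref{cl:oriznutiZdola}) that a region of small average $W$-density meets $\SUPPORT W$ in small measure; and you normalise by the $U$-density $d'_{ij}$ (preserving cell masses exactly) where the paper normalises by $W^{ij}$. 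Both choices are sound.
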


Since the cut-norm topology is stronger than the weak$^*$ topology, we get the following corollary.
\begin{corollary}\label{cor:assymptcont}
Suppose that $(W_n)_n$ is a sequence of graphons on a probability space $\Omega$ that converges to a graphon $W:\Omega^2\rightarrow[0,1]$ in the cut-norm. Suppose that $\frak m\in\matchingpolytope(W)$. Then there exists a sequence $\frak m_n\in\matchingpolytope(W_n)$  such that $(\frak m_n)_n$ converges to $\frak m$ in the cut-norm. In particular, the sequence $(\frak m_n)_n$ converges to $\frak m$ in the weak$^*$ topology.
\end{corollary}

Recall that each function $F\in L^1(\Omega^2)$ can be approximated by a step-function up to an arbitrarily small error in the $L^1$-norm. Further, the approximating function can be chosen to equal the average on each step. In the proof of Theorem~\ref{thm:convergencematchings}, we will need the following extension of this fact: firstly, we want to approximate two functions $F_1,F_2\in L^1(\Omega^2)$ using the same steps, and secondly, we want the steps to be squares with sides of the same measure.
\begin{lemma}\label{lem:approximation_by_means}
Let $(\Omega,\nu)$ be a probability space. Then for every pair $(F_1,F_2)\in \LpSpace^1(\Omega^2)\times \LpSpace^1(\Omega^2)$ and every $\varepsilon>0$ there exists a number $k\in\mathbb{N}$ and a partition of $\Omega$ into $k$ pairwise disjoint subsets $\Omega_1,\ldots,\Omega_k$, each of measure 	$\frac 1k$, such that
\begin{equation}\label{eq:approximation_by_mean_values}
\sum\limits_{i,j=1}^k\int\limits_{\Omega_i\times\Omega_j}|F_1-F_1^{ij}|<\varepsilon\;\;\;\;\;\text{ and }\;\;\;\;\;\sum\limits_{i,j=1}^k\int\limits_{\Omega_i\times\Omega_j}|F_2-F_2^{ij}|<\varepsilon\;,
\end{equation}
where
\begin{equation}\nonumber
F_1^{ij}=\frac{1}{\nu(\Omega_i)\nu(\Omega_j)}\int\limits_{\Omega_i\times\Omega_j}F_1\;\;\;\;\;\text{ and }\;\;\;\;\;F_2^{ij}=\frac{1}{\nu(\Omega_i)\nu(\Omega_j)}\int\limits_{\Omega_i\times\Omega_j}F_2\;,\;\;\;\;\;i,j=1,\ldots,n\;.
\end{equation}
\end{lemma}
The proof of Lemma~\ref{lem:approximation_by_means} is given in Appendix.

\subsection{Proof of Theorem~\ref{thm:convergencematchings}}
Let $\varepsilon>0$ be fixed.
By basic properties of $\LpSpace^1$-functions there is $M>0$ (which we fix now) such that if we define
\begin{equation}\nonumber
\tilde{\frak m}(x,y)=\min\left\{\frak m(x,y),M\right\}
\end{equation}
then we have
\begin{equation}\label{eq:oriznutiShora}
\|\tilde{\frak m}-\frak m\|_{\LpSpace^1(\Omega^2)}<\frac 12\varepsilon\;.
\end{equation}
Moreover, it is obvious that such defined function $\tilde{\frak m}$ is still a matching in the graphon $W$.
We fix $\tilde\varepsilon>0$ such that
\begin{equation}\label{eq:choiceOfTildeEpsilon}
3\tilde\varepsilon+6\sqrt{\tilde\varepsilon}M+2\tilde\varepsilon^{\frac 32}<\frac 12\varepsilon\;.
\end{equation}

\begin{claim}\label{cl:oriznutiZdola}
	There is $r>0$ such that whenever $\Theta\subseteq\Omega^2$ is of  positive measure such that $\frac{1}{\nu^{\otimes 2}(\Theta)}\int\limits_{\Theta}W<r$ then
	\begin{equation}\nonumber
	\nu^{\otimes 2}\left(\SUPPORT(W)\cap\Theta\right)<\frac 1{2M}\tilde\varepsilon\;.
	\end{equation}
\end{claim}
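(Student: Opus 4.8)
\textbf{Proof proposal for Claim~\ref{cl:oriznutiZdola}.}
The plan is a short measure-theoretic argument based on splitting the support of $W$ into the part where $W$ is bounded away from $0$ and the part where $W$ is positive but tiny. Write $S:=\SUPPORT(W)=\{(x,y)\colon W(x,y)>0\}$ (defined up to a null set). Since $S=\bigcup_{n\in\bbN}\{W\ge\tfrac1n\}$ and the sets $\{W\ge\tfrac1n\}$ increase to $S$, continuity of measure from below gives $\nu^{\otimes 2}(\{0<W<t\})=\nu^{\otimes 2}(S)-\nu^{\otimes 2}(\{W\ge t\})\to 0$ as $t\to 0^+$. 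The point of the whole claim is exactly this: the region where $W$ is positive but below any fixed threshold $t$ has \emph{globally} small measure, so the only ``dangerous'' contribution to $\nu^{\otimes 2}(S\cap\Theta)$ comes from $\{W\ge t\}\cap\Theta$, and that part is controlled by the smallness of $\int_\Theta W$ via Markov's inequality.

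Concretely, I would first fix $t>0$ so small that $\nu^{\otimes 2}(\{0<W<t\})<\tfrac{\tilde\varepsilon}{4M}$, and then set $r:=\tfrac{t\tilde\varepsilon}{4M}$. Now take any $\Theta\subseteq\Omega^2$ of positive measure with $\tfrac{1}{\nu^{\otimes 2}(\Theta)}\int_\Theta W<r$. Decompose
\[
S\cap\Theta=\bigl(\{W\ge t\}\cap\Theta\bigr)\cup\bigl(\{0<W<t\}\cap\Theta\bigr).
\]
For the second set, $\nu^{\otimes 2}(\{0<W<t\}\cap\Theta)\le\nu^{\otimes 2}(\{0<W<t\})<\tfrac{\tilde\varepsilon}{4M}$. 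For the first set, a Markov-type estimate gives $t\cdot\nu^{\otimes 2}(\{W\ge t\}\cap\Theta)\le\int_{\{W\ge t\}\cap\Theta}W\le\int_\Theta W<r\,\nu^{\otimes 2}(\Theta)\le r$, using $\nu^{\otimes 2}(\Theta)\le 1$; hence $\nu^{\otimes 2}(\{W\ge t\}\cap\Theta)<r/t=\tfrac{\tilde\varepsilon}{4M}$. Adding the two bounds yields $\nu^{\otimes 2}(\SUPPORT(W)\cap\Theta)<\tfrac{\tilde\varepsilon}{2M}$, which is what is claimed.

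There is no genuine obstacle here; the argument is routine. The only step that needs a moment's thought is the observation in the first paragraph that $\{0<W<t\}$ has small global measure for $t$ small — this is just continuity of measure applied to an increasing union — and once $t$ (hence $r$) is chosen accordingly, everything reduces to Markov's inequality together with $\nu^{\otimes 2}(\Theta)\le 1$.
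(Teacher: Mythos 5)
Your proof is correct and takes essentially the same approach as the paper's: choose a threshold $s$ (your $t$) small enough that $\{0<W<s\}$ has global measure below $\tfrac{\tilde\varepsilon}{4M}$, set $r=\tfrac{s\tilde\varepsilon}{4M}$, and then bound $\nu^{\otimes 2}(\{W\ge s\}\cap\Theta)$ via Markov together with $\nu^{\otimes 2}(\Theta)\le 1$. The paper phrases the combination as a proof by contradiction while you split $\SUPPORT(W)\cap\Theta$ into two pieces and bound each directly, but this is only a presentational difference.
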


\begin{proof}
	By basic properties of measurable functions, there is $s>0$ such that
	\begin{equation}\label{eq:maleHodnotyW}
	\nu^{\otimes 2}\{(x,y)\in\Omega^2\colon 0<W(x,y)<s\}<\frac 1{4M}\tilde\varepsilon\;.
	\end{equation}
	We will prove that $r=\frac 1{4M}\tilde\varepsilon s$ works. Suppose for a contradiction that there is $\Theta\subseteq\Omega^2$ of  positive measure with $\frac{1}{\nu^{\otimes 2}(\Theta)}\int\limits_{\Theta}W<r$ such that
	\begin{equation}\label{eq:predpokladProSpor}
	\nu^{\otimes 2}\left(\SUPPORT(W)\cap\Theta\right)\ge\frac 1{2M}\tilde\varepsilon\;.
	\end{equation}
	Then we have
	\begin{equation}\nonumber
	\begin{split}
	&r>\frac 1{\nu^{\otimes 2}(\Theta)}\int\limits_{\Theta}W\ge\frac 1{\nu^{\otimes 2}(\Theta)}\nu^{\otimes 2}\left(\{(x,y)\in\Theta\colon W(x,y)\ge s\}\right)\cdot s\\
	\stackrel{(\ref{eq:maleHodnotyW})}{>}&\frac 1{\nu^{\otimes 2}(\Theta)}\left(\nu^{\otimes 2}\left(\SUPPORT(W)\cap\Theta\right)-\frac 1{4M}\tilde\varepsilon\right)s\stackrel{(\ref{eq:predpokladProSpor})}{\ge}\frac 1{\nu^{\otimes 2}(\Theta)}\cdot\frac 1{4M}\tilde\varepsilon s\ge\frac 1{4M}\tilde\varepsilon s\;,
	\end{split}
	\end{equation}
	which is the desired contradiction with the definition of $r$.
\end{proof}

Now we fix $r>0$ from Claim~\ref{cl:oriznutiZdola}, and we set
\begin{equation}\label{eq:choiceOfEta}
\eta=\frac 12\tilde\varepsilon\cdot\frac 1{1+2M+2\frac Mr}\;.
\end{equation}
By Lemma~\ref{lem:approximation_by_means} there is a natural number $k$ and a partition of $\Omega$ into pairwise disjoint subsets $\Omega_1,\ldots,\Omega_k$, each of measure $\frac 1k$, such that
\begin{equation}\label{eq:approx_of_W_and_m}
\sum\limits_{i,j=1}^k\int\limits_{\Omega_i\times\Omega_j}|W-W^{ij}|<\eta^2\;\;\;\;\;\text{ and }\;\;\;\;\;\sum\limits_{i,j=1}^k\int\limits_{\Omega_i\times\Omega_j}|\tilde{\frak m}-m^{ij}|<\eta^2\;,
\end{equation}
where
\begin{equation}\nonumber
W^{ij}=\frac{1}{\nu(\Omega_i)\nu(\Omega_j)}\int\limits_{\Omega_i\times\Omega_j}W\;\;\;\;\;\text{ and }\;\;\;\;\;m^{ij}=\frac{1}{\nu(\Omega_i)\nu(\Omega_j)}\int\limits_{\Omega_i\times\Omega_j}\tilde{\frak m}\;,\;\;\;\;\;i,j=1\ldots,n\;.
\end{equation}
The first inequality from (\ref{eq:approx_of_W_and_m}) easily implies that for all but at most $\eta k^2$ pairs $(i,j)$ we have
\begin{equation}\label{eq:app_W^ij}
\int\limits_{\Omega_i\times\Omega_j}\left|W-W^{ij}\right|\le\frac 1{k^2}\eta\;.
\end{equation}
Similarly, the second inequality from (\ref{eq:approx_of_W_and_m}) implies that for all but at most $\eta k^2$ pairs $(i,j)$ we have
\begin{equation}\label{eq:app_m^ij}
\int\limits_{\Omega_i\times\Omega_j}\left|\frak m-m^{ij}\right|\le\frac 1{k^2}\eta\;.
\end{equation}

We are now in a position, when we can find $\delta>0$ with the properties required by Theorem~\ref{thm:convergencematchings}. Set
\begin{equation}\label{eq:choiceOfDelta}
\delta=\frac 1{k^2}\eta\;.
\end{equation}
So, let $U\colon\Omega^2\rightarrow[0,1]$ be a graphon such that $\|U-W\|_{\square}<\delta$.
Let $A\subset [k]\times[k]$ denote the set of all those pairs $(i,j)$ for which either (\ref{eq:app_W^ij}) or (\ref{eq:app_m^ij}) fails. We have that $|A|\le 2\eta k^2$.
We define
\begin{equation}\nonumber
t(x,y)=
\begin{cases}
\dfrac{m^{ij}}{W^{ij}}\cdot U(x,y)&\text{ if }(i,j)\notin A\text{ and }W^{ij}\ge r,\\
0&\text{ otherwise},
\end{cases}
\;\;\;\;\;(x,y)\in\Omega_i\times\Omega_j,\;\;\;\;\;i,j=1,\ldots,k\;.
\end{equation}

\begin{claim}\label{cl:tJeBlizko}
	We have that $\|t-\tilde{\frak m}\|_{\square}\le\tilde\varepsilon$.
\end{claim}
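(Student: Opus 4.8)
The plan is to estimate $\|t-\tilde{\frak m}\|_\square$ by bounding the stronger quantity $\|t-\tilde{\frak m}\|_{\LpSpace^1(\Omega^2)}$ (which dominates the cut-norm), and to do this block by block, splitting $[k]\times[k]$ into three types of pairs: the ``bad'' pairs in $A$; the ``good'' pairs $(i,j)\notin A$ with $W^{ij}<r$; and the ``good heavy'' pairs $(i,j)\notin A$ with $W^{ij}\ge r$. On each type I will produce a contribution bounded by a small multiple of $\tilde\varepsilon$ (or of $\eta$, $\sqrt{\tilde\varepsilon}M$, etc.), and finally invoke the choice~\eqref{eq:choiceOfTildeEpsilon} of $\tilde\varepsilon$ — or rather the choice~\eqref{eq:choiceOfEta} of $\eta$, which was precisely engineered so that $\eta(1+2M+2\tfrac Mr)\le\tfrac12\tilde\varepsilon$ — to conclude. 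Throughout I will use that $0\le\tilde{\frak m}\le M$, that $0\le m^{ij}\le M$ (by averaging), and that $0\le U,W\le 1$.

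First I would handle the bad pairs $(i,j)\in A$. There $t\equiv 0$, so the block contributes $\int_{\Omega_i\times\Omega_j}\tilde{\frak m}\le M\cdot\nu^{\otimes 2}(\Omega_i\times\Omega_j)=M/k^2$; summing over the at most $2\eta k^2$ pairs in $A$ gives at most $2\eta M$. Second, for good pairs with $W^{ij}<r$: here again $t\equiv 0$, so the block contributes $\int_{\Omega_i\times\Omega_j}\tilde{\frak m}$. Since $(i,j)\notin A$, inequality~\eqref{eq:app_W^ij} holds, and together with $W^{ij}<r$ this gives $\frac{1}{\nu^{\otimes 2}(\Theta)}\int_\Theta W< r + \eta/(\nu^{\otimes2}(\Theta)k^2)$ — more cleanly, $\int_{\Omega_i\times\Omega_j}W\le W^{ij}/k^2+\eta/k^2$, so the average of $W$ over this block is below $r+\eta$; applying Claim~\ref{cl:oriznutiZdola} (with a slightly enlarged $r$, or by absorbing $\eta$ into the choice of $r$) we get $\nu^{\otimes2}(\SUPPORT(W)\cap(\Omega_i\times\Omega_j))<\frac1{2M}\tilde\varepsilon\cdot\text{(block measure scaling)}$, and since $\tilde{\frak m}$ is supported on $\SUPPORT W$ and bounded by $M$, the block contributes at most $M\cdot\frac1{2M}\tilde\varepsilon$ per unit — summing over all such blocks yields at most $\tfrac12\tilde\varepsilon$. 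I would double-check here exactly how Claim~\ref{cl:oriznutiZdola} is applied across a union of blocks versus one block; the cleanest route is to apply it once to $\Theta=\bigcup\{\Omega_i\times\Omega_j : (i,j)\notin A,\ W^{ij}<r\}$, whose $W$-average is $<r+\eta\le$ (the $r$ of the claim, chosen with room to spare).

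The main work, and the expected main obstacle, is the third class: good heavy pairs, where $t=\frac{m^{ij}}{W^{ij}}U$. Here I write
\[
|t-\tilde{\frak m}|\le \Bigl|\tfrac{m^{ij}}{W^{ij}}U-\tfrac{m^{ij}}{W^{ij}}W\Bigr| + \Bigl|\tfrac{m^{ij}}{W^{ij}}W - m^{ij}\Bigr| + |m^{ij}-\tilde{\frak m}|,
\]
and bound the three pieces separately over $\Omega_i\times\Omega_j$. The third piece sums to at most $\eta^2\le\eta$ by~\eqref{eq:approx_of_W_and_m}. The second piece is $\frac{m^{ij}}{W^{ij}}|W-W^{ij}|\le\frac Mr|W-W^{ij}|$ (using $m^{ij}\le M$ and $W^{ij}\ge r$), which sums to at most $\frac Mr\eta^2\le\frac Mr\eta$. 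The first piece is the delicate one: $\frac{m^{ij}}{W^{ij}}|U-W|\le\frac Mr|U-W|$ pointwise, but $\int_{\Omega^2}|U-W|$ is \emph{not} controlled by the cut-norm hypothesis $\|U-W\|_\square<\delta$ — only block-averages are. However, $t$ is constant-per-block times $U$, and $U-W$ integrates against the indicator-difference structure well: on each block $\Omega_i\times\Omega_j$ the coefficient $m^{ij}/W^{ij}$ is constant, so $\int_{\Omega_i\times\Omega_j}\frac{m^{ij}}{W^{ij}}(U-W)=\frac{m^{ij}}{W^{ij}}\int_{\Omega_i\times\Omega_j}(U-W)$, and $|\int_{\Omega_i\times\Omega_j}(U-W)|\le\|U-W\|_\square<\delta=\eta/k^2$ for each block. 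Since we only need the cut-norm of $t-\tilde{\frak m}$, not its $L^1$ norm, testing against an arbitrary $S\times T$ and decomposing $S\times T$ along the grid reduces the first-piece contribution to a sum over at most $k^2$ blocks of $\frac Mr\cdot\delta=\frac Mr\cdot\frac\eta{k^2}$, i.e. at most $\frac Mr\eta$ in total. Collecting everything — $2\eta M$ (bad) $+\ \tfrac12\tilde\varepsilon$ (light) $+\ \eta+\frac Mr\eta+\frac Mr\eta$ (heavy) $\le \tfrac12\tilde\varepsilon + \eta(1+2M+2\tfrac Mr)\le\tfrac12\tilde\varepsilon+\tfrac12\tilde\varepsilon=\tilde\varepsilon$ by~\eqref{eq:choiceOfEta} — gives $\|t-\tilde{\frak m}\|_\square\le\tilde\varepsilon$, as claimed. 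The one subtlety to get right is that cut-norm bounds are additive over a fixed grid only after one decomposes the test rectangle $S\times T$ into its grid pieces $\,(S\cap\Omega_i)\times(T\cap\Omega_j)$, and each such piece of $U-W$ has integral at most $\|U-W\|_\square$ in absolute value; that is what keeps the ``heavy'' first-piece term at $O(\eta)$ rather than $O(1)$.
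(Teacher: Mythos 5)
Your proof is correct and follows essentially the same route as the paper's: the same three-way split of the grid into the pairs in $A$, the light pairs with $W^{ij}<r$ (handled by applying Claim~\ref{cl:oriznutiZdola} to their union), and the heavy pairs, where the test rectangle $S\times T$ is cut along the grid so that the cut-norm hypothesis controls $\int_{(S\cap\Omega_i)\times(T\cap\Omega_j)}(U-W)$ block by block. The one point you flagged for checking resolves trivially in your favour: $W^{ij}$ is by definition the exact average of $W$ over $\Omega_i\times\Omega_j$, so the average of $W$ over the union of the light blocks is already $<r$ and no $\eta$ slack is needed.
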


\begin{proof}
	We need to show that for every measurable sets $S,T\subseteq\Omega$ it holds $\left|\int\limits_{S\times T}(t-\tilde{\frak m})\right|\le\tilde\varepsilon$. So let us fix the sets $S,T\subseteq\Omega$.
	Let $\Theta_A$ denote the union of all the sets $\Omega_i\times\Omega_j$ for which $(i,j)\in A$.
	Similarly, let $\Theta_B$ denote the union of all the sets $\Omega_i\times\Omega_j$ for which $(i,j)\notin A$ and $W^{ij}<r$, and let $\Theta_C$ denote the union of all the sets $\Omega_i\times\Omega_j$ for which $(i,j)\notin A$ and $W^{ij}\ge r$.
	
	The bulk of the work is in proving the following three subclaims.
	\begin{subclaim}\label{claim:martin1}
		We have
		$$\left|\int\limits_{(S\times T)\cap\Theta_A}(t-\tilde{\frak m})\right|\le2\eta M\;.$$
	\end{subclaim}
	\begin{subclaim}\label{claim:martin2}
		We have
		$$
		\left|\int\limits_{(S\times T)\cap\Theta_B}(t-\tilde{\frak m})\right|<\frac 12\tilde\varepsilon\;.
		$$
	\end{subclaim}
	\begin{subclaim}\label{claim:martin3}
		We have
		$$
		\left|\int\limits_{(S\times T)\cap\Theta_C}(t-\tilde{\frak m})\right|<\eta\left(1+2\frac Mr\right)\;.
		$$
	\end{subclaim}	
	Indeed, Subclaims~\ref{claim:martin1}--\ref{claim:martin3} complete the proof of Claim~\ref{cl:tJeBlizko} as then
	\begin{equation}\nonumber
	\left|\int\limits_{S\times T}(t-\tilde{\frak m})\right|<\eta\left(1+2M+2\frac Mr\right)+\frac 12\tilde\varepsilon\stackrel{(\ref{eq:choiceOfEta})}{\le}\tilde\varepsilon\;.
	\end{equation}
	
\begin{claimproof}[Proof of Subclaim~\ref{claim:martin1}]
Recall that by the definition of the set $A$, it holds $\nu^{\otimes 2}(\Theta_A)\le 2\eta$, and so we have
	\begin{equation}\nonumber
	\left|\int\limits_{(S\times T)\cap\Theta_A}(t-\tilde{\frak m})\right|=\int\limits_{(S\times T)\cap\Theta_A}\tilde{\frak m}\le\int\limits_{\Theta_A}\tilde{\frak m}\le\nu^{\otimes 2}(\Theta_A)M\le2\eta M\;.
	\end{equation}
\end{claimproof}
	
\begin{claimproof}[Proof of Subclaim~\ref{claim:martin2}]
 If $\nu^{\otimes 2}(\Theta_B)=0$ then trivially
	\begin{equation}\nonumber
	\left|\int\limits_{(S\times T)\cap\Theta_B}(t-\tilde{\frak m})\right|=0\;.
	\end{equation}
	So suppose that $\Theta_B$ is of positive measure. Note that then it clearly holds $\frac 1{\nu^{\otimes 2}(\Theta_B)}\int\limits_{\Theta_B}W<r$, and so we have
	\begin{equation}\nonumber
	\begin{split}
	\left|\int\limits_{(S\times T)\cap\Theta_B}(t-\tilde{\frak m})\right|&=\int\limits_{(S\times T)\cap\Theta_B}\tilde{\frak m}\le\nu^{\otimes 2}(\SUPPORT(\tilde{\frak m})\cap\Theta_B)\cdot M\\
	&\le\nu^{\otimes 2}(\SUPPORT(W)\cap\Theta_B)\cdot M\overset{\text{Claim~\ref{cl:oriznutiZdola}}}{<}\frac 12\tilde\varepsilon\;.
	\end{split}
	\end{equation}
\end{claimproof}
	
\begin{claimproof}[Proof of Sublclaim~\ref{claim:martin3}]
It is enough to show that whenever a pair $(i,j)\notin A$ is such that $W^{ij}\ge r$ then
	\begin{equation}\nonumber
	\left|\int\limits_{(S\times T)\cap(\Omega_i\times\Omega_j)}(t-\tilde{\frak m})\right|<\frac 1{k^2}\eta\left(1+2\frac Mr\right)\;.
	\end{equation}
	So let us fix such a pair $(i,j)$. Then we have
	\begin{equation}\nonumber
	\begin{split}
	&\left|\int\limits_{(S\times T)\cap(\Omega_i\times\Omega_j)}(t-\tilde{\frak m})\right|\\
	\le&\left|\int\limits_{(S\times T)\cap(\Omega_i\times\Omega_j)}\left(\frac{m^{ij}}{W^{ij}}U-m^{ij}\right)\right|+\left|\int\limits_{(S\times T)\cap(\Omega_i\times\Omega_j)}\left(m^{ij}-\tilde{\frak m}\right)\right|\\
	\le&\frac {m^{ij}}{W^{ij}}\left|\int\limits_{(S\times T)\cap(\Omega_i\times\Omega_j)}\left(U-W^{ij}\right)\right|+\int\limits_{(S\times T)\cap(\Omega_i\times\Omega_j)}|m^{ij}-\tilde{\frak m}|\\
	\le&\frac Mr\left|\int\limits_{(S\times T)\cap(\Omega_i\times\Omega_j)}\left((U-W)+(W-W^{ij})\right)\right|+\int\limits_{(S\times T)\cap(\Omega_i\times\Omega_j)}|m^{ij}-\tilde{\frak m}|\\
\le&\frac Mr\left|\int\limits_{(S\times T)\cap(\Omega_i\times\Omega_j)}(U-W)\right|+\frac Mr\int\limits_{(S\times T)\cap(\Omega_i\times\Omega_j)}|W-W^{ij}|+\int\limits_{(S\times T)\cap(\Omega_i\times\Omega_j)}|m^{ij}-\tilde{\frak m}|\\
\JUSTIFY{$(i,j)\notin A$}\le&\frac Mr\left|\int\limits_{(S\cap\Omega_i)\times (T\cap\Omega_j)}(U-W)\right|+\frac Mr\cdot\frac 1{k^2}\eta+\frac 1{k^2}\eta\\
\JUSTIFY{by (\ref{eq:choiceOfDelta})}<&\frac Mr\cdot\frac 1{k^2}\eta+\frac Mr\cdot\frac 1{k^2}\eta+\frac 1{k^2}\eta\\
	=&\frac 1{k^2}\eta\left(1+2\frac Mr\right)\;.
	\end{split}
	\end{equation}
\end{claimproof}
\end{proof}

Let $B_1$ denote the set of all those $x\in\Omega$ for which $$\int\limits_{y\in\Omega}t(x,y)>\int\limits_{y\in\Omega}\tilde{\frak m}(x,y)+\sqrt{\tilde\varepsilon}\;.$$ Similarly, let $B_2$ denote the set of all those $x\in\Omega$ for which $$\int\limits_{y\in\Omega}t(y,x)>\int\limits_{y\in\Omega}\tilde{\frak m}(y,x)+\sqrt{\tilde\varepsilon}\;.$$
Then we have
\begin{equation}\nonumber
\tilde\varepsilon\stackrel{\text{Claim~\ref{cl:tJeBlizko}}}{\ge}\|t-\tilde{\frak m}\|_{\square}\ge\int\limits_{B_1\times\Omega}(t-\tilde{\frak m})>\nu(B_1)\sqrt{\tilde\varepsilon}\;,
\end{equation}
and consequently $\nu(B_1)<\sqrt{\tilde\varepsilon}$. In the same way, we conclude that $\nu(B_2)<\sqrt{\tilde\varepsilon}$.
Now we are ready to define $\frak m_U$ by setting
\begin{equation}\label{eq:defOfm_U}
\frak m_U(x,y)=
\begin{cases}
0&\text{if }x\in B_1\cup B_2\text{ or }y\in B_1\cup B_2,\\
\frac 1{1+2\sqrt{\tilde\varepsilon}}t(x,y)&\text{otherwise},
\end{cases}
\;\;\;\;\;(x,y)\in\Omega^2\;.
\end{equation}

\begin{claim}\label{cl:m_U-t}
We have that $\|\frak m_U-t\|_{\LpSpace^1(\Omega^2)}<6\sqrt{\tilde\varepsilon}M+2\tilde\varepsilon+2\tilde\varepsilon^{\frac 32}$.
\end{claim}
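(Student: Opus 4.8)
The plan is to split $\Omega^2$ according to the two cases in the definition~\eqref{eq:defOfm_U} of $\frak m_U$ and estimate the $\LpSpace^1$-mass of $\frak m_U-t$ on each piece separately. Throughout I would use that $t,\tilde{\frak m},\frak m_U\ge 0$, that $\tilde{\frak m}\le M$ pointwise, and that, by Claim~\ref{cl:tJeBlizko} applied with $S=T=\Omega$,
\[
\int_{\Omega^2}t\le\int_{\Omega^2}\tilde{\frak m}+\|t-\tilde{\frak m}\|_{\square}\le M+\tilde\varepsilon\;.
\]
Write $N=\{(x,y)\in\Omega^2\colon x\in B_1\cup B_2\text{ or }y\in B_1\cup B_2\}$ and recall from the lines preceding~\eqref{eq:defOfm_U} that $\nu(B_1\cup B_2)<2\sqrt{\tilde\varepsilon}$.

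On $\Omega^2\setminus N$ we have $\frak m_U=\frac{1}{1+2\sqrt{\tilde\varepsilon}}\,t$, hence $|\frak m_U-t|=\frac{2\sqrt{\tilde\varepsilon}}{1+2\sqrt{\tilde\varepsilon}}\,t<2\sqrt{\tilde\varepsilon}\,t$. Integrating over $\Omega^2\setminus N$ and bounding $\int_{\Omega^2\setminus N}t\le\int_{\Omega^2}t\le M+\tilde\varepsilon$ gives a contribution of at most $2\sqrt{\tilde\varepsilon}(M+\tilde\varepsilon)=2\sqrt{\tilde\varepsilon}M+2\tilde\varepsilon^{\frac 32}$.

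On $N$ we have $\frak m_U=0$, so $\int_N|\frak m_U-t|=\int_N t$. Since $t\ge0$ and $N\subseteq\big((B_1\cup B_2)\times\Omega\big)\cup\big(\Omega\times(B_1\cup B_2)\big)$, it suffices to bound $\int_{(B_1\cup B_2)\times\Omega}t$ and, by the same argument, $\int_{\Omega\times(B_1\cup B_2)}t$. For the first I would detour through $\tilde{\frak m}$:
\[
\int_{(B_1\cup B_2)\times\Omega}t=\int_{(B_1\cup B_2)\times\Omega}\tilde{\frak m}+\int_{(B_1\cup B_2)\times\Omega}(t-\tilde{\frak m})\le M\,\nu(B_1\cup B_2)+\|t-\tilde{\frak m}\|_{\square}<2\sqrt{\tilde\varepsilon}M+\tilde\varepsilon\;,
\]
where the first summand is estimated by $\tilde{\frak m}\le M$ together with $\nu^{\otimes 2}((B_1\cup B_2)\times\Omega)=\nu(B_1\cup B_2)$, and the second by Claim~\ref{cl:tJeBlizko}, which applies because $(B_1\cup B_2)\times\Omega$ is a measurable rectangle. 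The symmetric rectangle $\Omega\times(B_1\cup B_2)$ gives the same bound, so $\int_N t<4\sqrt{\tilde\varepsilon}M+2\tilde\varepsilon$. Adding the two contributions yields $\|\frak m_U-t\|_{\LpSpace^1(\Omega^2)}<6\sqrt{\tilde\varepsilon}M+2\tilde\varepsilon+2\tilde\varepsilon^{\frac 32}$, as claimed.

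The only genuinely delicate point is the estimate on $N$. Pointwise, $t$ can be as large as $M/r$, far above the bound $\tilde{\frak m}\le M$, so one cannot dominate $\int_N t$ by $\int_N\tilde{\frak m}$ directly. The crux of the argument is that $N$ is a union of just two axis-parallel rectangles, and on each of them the cut-norm closeness from Claim~\ref{cl:tJeBlizko} converts this uncontrolled pointwise blow-up of $t$ into the harmless quantity $\|t-\tilde{\frak m}\|_{\square}\le\tilde\varepsilon$; everything else is routine bookkeeping.
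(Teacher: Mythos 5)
Your proposal is correct and follows essentially the same route as the paper: split $\Omega^2$ into the set where $\frak m_U$ vanishes (covered by the two rectangles $(B_1\cup B_2)\times\Omega$ and $\Omega\times(B_1\cup B_2)$) and its complement, bound $\int t$ on each rectangle by detouring through $\tilde{\frak m}\le M$ plus the cut-norm error from Claim~\ref{cl:tJeBlizko}, and bound the complement contribution by the scaling factor $\frac{2\sqrt{\tilde\varepsilon}}{1+2\sqrt{\tilde\varepsilon}}$ times $\int t\le M+\tilde\varepsilon$. The only cosmetic difference is that you bound the latter integral over all of $\Omega^2$ while the paper applies the cut-norm claim directly with $S=T=\Omega\setminus(B_1\cup B_2)$; both yield the same constants.
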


\begin{proof}
	We set $B=\left((B_1\cup B_2)\times\Omega\right)\cup\left(\Omega\times(B_1\cup B_2)\right)$.
	Then we have
	\begin{align*}
	&\|\frak m_U-t\|_{\LpSpace^1(\Omega^2)}=\int\limits_{B}|\frak m_U-t|+\int\limits_{\Omega^2\setminus B}|\frak m_U-t|\\
	=&
	\int\limits_Bt+\left(1-\frac 1{1+2\sqrt{\tilde\varepsilon}}\right)\int\limits_{\Omega^2\setminus B}t\\
	=&\int\limits_{(B_1\cup B_2)\times\Omega}t+\int\limits_{\Omega\times(B_1\cup B_2)}t+\left(1-\frac 1{1+2\sqrt{\tilde\varepsilon}}\right)\int\limits_{(\Omega\setminus(B_1\cup B_2))^2}t\\
	\JUSTIFY{Claim~\ref{cl:tJeBlizko}}\le&\int\limits_{(B_1\cup B_2)\times\Omega}\tilde{\frak m}+\tilde\varepsilon+\int\limits_{\Omega\times(B_1\cup B_2)}\tilde{\frak m}+\tilde\varepsilon+\left(1-\frac 1{1+2\sqrt{\tilde\varepsilon}}\right)\left(\int\limits_{(\Omega\setminus(B_1\cup B_2))^2}\tilde{\frak m}+\tilde\varepsilon\right)\\
	\le&\nu^{\otimes 2}((B_1\cup B_2)\times\Omega)\cdot M+\nu^{\otimes 2}(\Omega\times(B_1\cup B_2))\cdot M+2\tilde\varepsilon+\frac{2\sqrt{\tilde\varepsilon}}{1+2\sqrt{\tilde\varepsilon}}(M+\tilde\varepsilon)\\
	<&4\sqrt{\tilde\varepsilon}M+2\tilde\varepsilon+2\sqrt{\tilde\varepsilon}(M+\tilde\varepsilon)=6\sqrt{\tilde\varepsilon}M+2\tilde\varepsilon+2\tilde\varepsilon^{\frac 32}\;.
\end{align*}
\end{proof}

Now we have
\begin{align*}
\|\frak m_U-\frak m\|_{\square}&\le\|\frak m_U-t\|_{\square}+\|t-\tilde{\frak m}\|_{\square}+\|\tilde{\frak m}-\frak m\|_{\square}\\
&\le\|\frak m_U-t\|_{\LpSpace^1(\Omega^2)}+\|t-\tilde{\frak m}\|_{\square}+\|\tilde{\frak m}-\frak m\|_{\LpSpace^1(\Omega^2)}\\
\JUSTIFY{by (\ref{eq:oriznutiShora}),\text{ Claims }\ref{cl:tJeBlizko}\text{ and }\ref{cl:m_U-t}}
&<\frac 12\varepsilon+3\tilde\varepsilon+6\sqrt{\tilde\varepsilon}M+2\tilde\varepsilon^{\frac 32}\\
\JUSTIFY{(\ref{eq:choiceOfTildeEpsilon})}&<\varepsilon\;.
\end{align*}
So it remains to show that $\frak m_U$ is a matching in the graphon $U$.

The fact that $\frak m_U$ is a nonnegative function from $\LpSpace^1(\Omega^2)$ is obvious, and we also have $\SUPPORT(\frak m_U)\subseteq\SUPPORT(t)\subseteq\SUPPORT(U)$. So we only need to show that for almost every $x\in\Omega$ it holds
\begin{equation}\label{eq:jeToMatching}
\int\limits_{y\in\Omega}\frak m_U(x,y)+\int\limits_{y\in\Omega}\frak m_U(y,x)\le 1\;.
\end{equation}
This is trivially satisfied for every $x\in B_1\cup B_2$ as then the left-hand side of (\ref{eq:jeToMatching}) equals $0$. So let us fix $x\in\Omega\setminus(B_1\cup B_2)$. We may assume that
\begin{equation}\label{eq:tildeMJeMatching}
\int\limits_{y\in\Omega}\tilde{\frak m}(x,y)+\int\limits_{y\in\Omega}\tilde{\frak m}(y,x)\le 1\;,
\end{equation}
as $\tilde{\frak m}$ is a matching (in the graphon $W$).
Then it holds
\begin{align*}
\int\limits_{y\in\Omega}\frak m_U(x,y)+\int\limits_{y\in\Omega}\frak m_U(y,x)&\stackrel{(\ref{eq:defOfm_U})}{\le}\frac 1{1+2\sqrt{\tilde\varepsilon}}\left(\int\limits_{y\in\Omega}t(x,y)+\int\limits_{y\in\Omega}t(y,x)\right)\\
\JUSTIFY{$x\notin B_1\cup B_2$}&\le\frac 1{1+2\sqrt{\tilde\varepsilon}}\left(\int\limits_{y\in\Omega}\tilde{\frak m}(x,y)+\int\limits_{y\in\Omega}\tilde{\frak m}(y,x)+2\sqrt{\tilde\varepsilon}\right)\stackrel{(\ref{eq:tildeMJeMatching})}{\le}1\;,
\end{align*}
which completes the proof of Theorem~\ref{thm:convergencematchings}.

\section{Concluding remarks}\label{sec:concluding}
\subsection{Approximating $\matchingpolytope(W)$ and $\fraccoverpolytope(W)$ using $W$-random graphs}
In Section~\ref{sec:matchingpolytonconvergent}, we showed that if $(W_n)_n$ converge to $W$ in the cut-norm then, in a certain sense, $\matchingpolytope(W_n)$ asymptotically contain $\matchingpolytope(W)$, and $\fraccoverpolytope(W_n)$ are asymptotically contained in $\fraccoverpolytope(W)$. We also showed that in general, these inclusions may be proper. However, we believe that if we take $W_n$ as a representation of $\mathbb{G}(n,W)$ then with probability one both these inclusions are asymptotically at equality. 

\subsection{Bipartiteness from the matching polyton}
Theorems~\ref{thm:CovIntegral} and~\ref{thm:CovIntegralImpliesBipartite} characterize bipartiteness of a graphon in terms of its fractional vertex cover polyton. For finite graphs there is another characterization in terms of the matching polytope: a graph is bipartite if and only if $\matchingpolytope(G)$ is integral. Recall that there seems to be no counterpart to the concept of integrality of a graphon matching (c.f. Remark~\ref{rem:intVSfrac}). So, we leave as an important question to provide a characterization of bipartiteness in terms of $\matchingpolytope(W)$.
\subsection{Perfect matching polyton}
Many variants of the above polytopes are considered in combinatorial optimization. As an example, let us mention the \emph{perfect matching polytope} $\perfmatchingpolytope(G)$ and the \emph{fractional perfect matching polytope} $\fracperfmatchingpolytope(G)$ of a graph $G$. The corresponding graphon polyton is
$$\perfmatchingpolytope(W)=\left\{\mathfrak{m}\in\matchingpolytope(W):\|\mathfrak m\|=1\right\}\;.
$$
(Again, we cannot distinguish between the integral and fractional version, cf. the discussion in Section~\ref{sec:matchings}.) It might be interesting to study this, and similar polytons. That said, let us emphasize that many basic results, like Edmonds' perfect matching polytope theorem, seem not to have a graphon counterpart as they concern integrality-related properties of the polytope.
\subsection{Generalizing the results to $F$-tilings}\label{sub:GenerelazingToFTilings}
Results in Section~\ref{sec:extremepoints} are specific to matchings --- even in the finite setting. Even though we have not worked out the details, we believe that our second main result, Theorem~\ref{thm:convergencematchings}, extends to general $F$-tilings as introduced in~\cite{HlHuPi:TilingsInGraphons} (and so does its proof).
\subsection{Extreme points of fractional vertex cover polytons}\label{exposed}
As we explained in Section~\ref{sub:graphonintegrality}, every extreme point of the fractional vertex cover polyton of a bipartite graphon is exposed.
We leave it as an open question whether every extreme point of the fractional vertex cover polyton is exposed even for non-bipartite graphons.
\section*{Acknowledgment}
We thank an anonymous referee who provided very detailed and helpful comments.
\bigskip

The contents of this publication reflects only the authors' views and not necessarily the views of the European Commission of the European Union.


\bibliographystyle{amsplain} \bibliography{../bibl}

\providecommand{\bysame}{\leavevmode\hbox to3em{\hrulefill}\thinspace}
\providecommand{\MR}{\relax\ifhmode\unskip\space\fi MR }
\providecommand{\MRhref}[2]{%
  \href{http://www.ams.org/mathscinet-getitem?mr=#1}{#2}
}
\providecommand{\href}[2]{#2}
\begin{thebibliography}{10}

\bibitem{Borgs2008c}
C.~Borgs, J.~T. Chayes, L.~Lov{\'a}sz, V.~T. S{\'o}s, and K.~Vesztergombi,
  \emph{{Convergent sequences of dense graphs. {I}. {S}ubgraph frequencies,
  metric properties and testing}}, Adv. Math. \textbf{219} (2008), no.~6,
  1801--1851.

\bibitem{ChatVar:LargeDev}
S.~Chatterjee and S.~R.~S. Varadhan, \emph{{The large deviation principle for
  the {E}rd\H{o}s-{R}{\'e}nyi random graph}}, European J. Combin. \textbf{32}
  (2011), no.~7, 1000--1017.

\bibitem{DHM:CliquesRandom}
M.~Dole\v{z}al, J.~Hladk{\'y}, and A.~M{\'a}th{\'e}, \emph{Cliques in dense
  inhomogeneous random graphs}, Random Structures Algorithms \textbf{51}
  (2017), no.~2, 275--314.

\bibitem{DoHlHuPi:CombinatorialOptimization}
M.~Dole{\v z}al, J.~Hladk{\'y}, P.~Hu, and D.~Piguet, \emph{First steps in
  combinatorial optimization on graphons: Matchings}, European {C}onference on
  {C}ombinatorics, {G}raph {T}heory and {A}pplications ({E}uro{C}omb 2017),
  Electron. Notes Discrete Math., vol.~61, Elsevier Sci. B. V., Amsterdam,
  2017, pp.~359--365.

\bibitem{Erdos1959}
P.~Erd\H{o}s and T.~Gallai, \emph{{On maximal paths and circuits of graphs}},
  Acta Math. Acad. Sci. Hungar \textbf{10} (1959), 337--356 (unbound insert).

\bibitem{HlHuPi:Komlos}
J.~Hladk{\'y}, P.~Hu, and D.~Piguet, \emph{{Koml{\'o}s's tiling theorem via
  graphon covers}}, to appear in J. Graph Theory, DOI: 10.1002/jgt.22365.

\bibitem{HlHuPi:TilingsInGraphons}
\bysame, \emph{{Tilings in graphons}}, arXiv:1606.03113.

\bibitem{HlaRoch:IndepCliCol}
J.~Hladk{\'y} and I.~Rocha, \emph{Independent sets, cliques, and colorings in
  graphons}, arXiv:1712.07367.

\bibitem{Komlos2000}
J.~Koml{\'o}s, \emph{{Tiling {T}ur{\'a}n theorems}}, Combinatorica \textbf{20}
  (2000), no.~2, 203--218.

\bibitem{Komlos1997}
J.~Koml{\'o}s, G.~N. S{\'a}rk{\"o}zy, and E.~Szemer{\'e}di, \emph{{Blow-up
  lemma}}, Combinatorica \textbf{17} (1997), no.~1, 109--123.

\bibitem{Lov:Sidorenko}
L.~Lov{\'a}sz, \emph{{Subgraph densities in signed graphons and the local
  {S}imonovits-{S}idorenko conjecture}}, Electron. J. Combin. \textbf{18}
  (2011), no.~1, Paper 127, 21.

\bibitem{Lovasz2012}
\bysame, \emph{{Large networks and graph limits}}, {American Mathematical
  Society Colloquium Publications}, vol.~60, American Mathematical Society,
  Providence, RI, 2012.

\bibitem{Lovasz2006}
L.~Lov{\'a}sz and B.~Szegedy, \emph{{Limits of dense graph sequences}}, J.
  Combin. Theory Ser. B \textbf{96} (2006), no.~6, 933--957.

\bibitem{Lovasz2010}
\bysame, \emph{{Regularity partitions and the topology of graphons}}, {An
  irregular mind}, {Bolyai Soc. Math. Stud.}, vol.~21, J{\'a}nos Bolyai Math.
  Soc., Budapest, 2010, pp.~415--446.

\bibitem{Posa1976}
L.~P{\'o}sa, \emph{{Hamiltonian circuits in random graphs}}, Discrete Math.
  \textbf{14} (1976), no.~4, 359--364.

\bibitem{Razborov2007}
A.~A. Razborov, \emph{{Flag algebras}}, J. Symbolic Logic \textbf{72} (2007),
  no.~4, 1239--1282.

\bibitem{Schrijver2003}
A.~Schrijver, \emph{{Combinatorial optimization. {P}olyhedra and efficiency.
  {V}ol. {A}}}, {Algorithms and Combinatorics}, vol.~24, Springer-Verlag,
  Berlin, 2003, Paths, flows, matchings, Chapters 1--38.

\end{thebibliography}

\appendix
\section{Proof of Lemma~\ref{lem:approximation_by_means}}
Let $\mathcal L$ be the family of all pairs $(F_1,F_2)\in \LpSpace^1(\Omega^2)\times \LpSpace^1(\Omega^2)$ such that for every $\varepsilon>0$ there exist a partition of $\Omega$ into finitely many pairwise disjoint subsets $B_1,\ldots,B_m$ (for a suitable natural number $m$), and real numbers $D_1^{pq}$, $D_2^{pq}$, $p,q=1,\ldots,m$, such that
\begin{equation}\nonumber
\sum\limits_{p,q=1}^m\int\limits_{B_p\times B_q}|F_1-D_1^{pq}|<\varepsilon\;\;\;\;\;\text{ and }\;\;\;\;\;\sum\limits_{p,q=1}^m\int\limits_{B_p\times B_q}|F_2-D_2^{pq}|<\varepsilon\;.
\end{equation}
It is easy to verify that $\mathcal L$ is a closed subspace of $\LpSpace^1(\Omega^2)\times \LpSpace^1(\Omega^2)$ containing all pairs consisting of characteristic functions of measurable rectangles. Therefore it holds $\mathcal L=\LpSpace^1(\Omega^2)\times \LpSpace^1(\Omega^2)$.

Now let us fix $(F_1,F_2)\in \LpSpace^1(\Omega^2)\times \LpSpace^1(\Omega^2)$ and $\varepsilon>0$. As $(F_1,F_2)\in\mathcal L$, we can find a partition of $\Omega$ into finitely many pairwise disjoint subsets $B_1,\ldots,B_m$, and real numbers $D_1^{pq}$, $D_2^{pq}$, $p,q=1,\ldots,m$, such that
\begin{equation}\label{eq:app1}
\sum\limits_{p,q=1}^m\int\limits_{B_p\times B_q}|F_1-D_1^{pq}|<\frac 14\varepsilon\;\;\;\;\;\text{ and }\;\;\;\;\;\sum\limits_{p,q=1}^m\int\limits_{B_p\times B_q}|F_1-D_2^{pq}|<\frac 14\varepsilon\;.
\end{equation}
By the absolute continuity of the Lebesgue integral there is $\delta>0$ such that $\int\limits_E|F_1|<\frac 14\varepsilon$ and $\int\limits_E|F_2|<\frac 14\varepsilon$, whenever $E\subseteq\Omega\times\Omega$ is such that $\nu^{\otimes 2}(E)<\delta$.
We fix a natural number $k$ such that $\frac mk<\frac{1}2\delta$. For every $p=1,\ldots,m$, we find a decomposition $B_p=B_p^0\sqcup B_p^1\sqcup\ldots\sqcup B_p^{n_p}$ of $B_p$ into finitely many pairwise disjoint subsets such that $\nu(B_p^r)=\frac 1k$ for $r=1,\ldots,n_p$, and $\nu(B_p^0)\le\frac 1k$. We set $F=\bigcup\limits_{p=1}^mB_p^0$. Then $\nu(F)$ is smaller that $\frac{1}2\delta$, and so
\begin{equation}\label{eq:abs_cont}
\int\limits_{(F\times\Omega)\cup(\Omega\times F)}|F_1|<\frac 14\varepsilon\;\;\;\;\;\text{ and }\;\;\;\;\;\int\limits_{(F\times\Omega)\cup(\Omega\times F)}|F_2|<\frac 14\varepsilon\;.
\end{equation}
Moreover, $\nu(F)$
is a multiple of $\frac{1}{k}$, and so it can be decomposed into finitely many disjoint subsets $\tilde B_1,\ldots,\tilde B_l$ (for a suitable natural number $l$), each of measure $\frac 1k$.
Let $\Omega_1,\ldots\Omega_k$ be some enumeration of the sets $\tilde B_1\ldots,\tilde B_l$ and $B_p^r$, $r=1\ldots,n_p$, $p=1,\ldots,m$. For every $i,j=1,\ldots,k$, we define real numbers $C_1^{ij}$ and $C_2^{ij}$ as follows. If $\Omega_i\times\Omega_j=B_p^r\times B_q^s$ for some $p,q\in\{1,\ldots,m\}$, $r\in\{1,\ldots,n_p\}$ and $s\in\{1,\ldots,n_q\}$ then we set $C_1^{ij}=D_1^{pq}$ and $C_2^{ij}=D_2^{pq}$. Otherwise, we set $C_1^{ij}=C_2^{ij}=0$. Let us fix $t\in\{0,1\}$.
Then we have
\begin{equation}\label{eq:aproximation_by_step_function}
\begin{split}
&\sum\limits_{i,j=1}^k\int\limits_{\Omega_i\times\Omega_j}|F_t-C_t^{ij}|\\
=&\sum\limits_{p,q=1}^m\sum\limits_{\substack{r=1,\ldots,n_p\\s=1,\ldots,n_q}}\int\limits_{B_p^r\times B_q^s}|F_t-D_t^{pq}|+\int\limits_{(F\times\Omega)\cup(\Omega\times F)}|F_t|\\
\stackrel{(\ref{eq:app1}),(\ref{eq:abs_cont})}{<}&\frac 14\varepsilon+\frac 14\varepsilon=\frac 12\varepsilon\;.
\end{split}
\end{equation}

\begin{claim}\label{cl:mean_value}
For every $i,j=1,\ldots,k$, we have that
\begin{equation}\nonumber
\int\limits_{\Omega_i\times\Omega_j}|F_t-F_t^{ij}|\le2\int\limits_{\Omega_i\times\Omega_j}|F_t-C_t^{ij}|\;.
\end{equation}
\end{claim}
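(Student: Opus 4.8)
The plan is to deduce this from the pointwise triangle inequality together with the single fact that $F_t^{ij}$ is, by definition, the mean of $F_t$ over $\Omega_i\times\Omega_j$. Fix $i,j\in\{1,\ldots,k\}$ and write $\mu_{ij}=\nu(\Omega_i)\nu(\Omega_j)$ (the actual value $\tfrac1{k^2}$ is irrelevant). The first step is the elementary estimate, valid for \emph{any} real constant $c$,
\[
|c-F_t^{ij}|=\frac{1}{\mu_{ij}}\left|\int\limits_{\Omega_i\times\Omega_j}\bigl(c-F_t\bigr)\right|\le\frac{1}{\mu_{ij}}\int\limits_{\Omega_i\times\Omega_j}|c-F_t|\;,
\]
which is just the definition of $F_t^{ij}$ followed by the triangle inequality for integrals (equivalently, Jensen's inequality applied to the convex function $|\cdot|$). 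In particular this holds for $c=C_t^{ij}$, and the argument does not care whether $C_t^{ij}$ equals some $D_t^{pq}$ or $0$ — only that it is a constant.

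The second step combines this with the pointwise bound $|F_t-F_t^{ij}|\le|F_t-C_t^{ij}|+|C_t^{ij}-F_t^{ij}|$. Integrating over $\Omega_i\times\Omega_j$ and then applying the displayed estimate with $c=C_t^{ij}$ gives
\[
\int\limits_{\Omega_i\times\Omega_j}|F_t-F_t^{ij}|\le\int\limits_{\Omega_i\times\Omega_j}|F_t-C_t^{ij}|+\mu_{ij}\,|C_t^{ij}-F_t^{ij}|\le 2\int\limits_{\Omega_i\times\Omega_j}|F_t-C_t^{ij}|\;,
\]
which is exactly the assertion of the claim.

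I do not expect any genuine obstacle here: the claim is the standard fact that replacing the $L^1$-optimal constant approximation of a function on a set of finite measure by the arithmetic mean of the function costs at most a factor of $2$, and the two displays above make this precise. The only points worth stating explicitly are that $\mu_{ij}>0$ (so that $F_t^{ij}$ is well defined) and that the bound is uniform in the choice of the constants $C_t^{ij}$, both of which are immediate.
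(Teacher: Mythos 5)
Your proof is correct, and it takes a genuinely different route from the paper's. The paper splits $\int_{\Omega_i\times\Omega_j}|F_t-F_t^{ij}|$ into the contributions from $\{F_t<F_t^{ij}\}$ and $\{F_t>F_t^{ij}\}$, observes that these two contributions are equal precisely because $F_t^{ij}$ is the mean, and then bounds a single one of them by $\int|F_t-C_t^{ij}|$ via a monotonicity-of-truncation argument (treating separately the case $C_t^{ij}\ge F_t^{ij}$ and its mirror). You instead use the pointwise triangle inequality $|F_t-F_t^{ij}|\le|F_t-C_t^{ij}|+|C_t^{ij}-F_t^{ij}|$ together with the Jensen-type bound $\mu_{ij}\,|C_t^{ij}-F_t^{ij}|\le\int|F_t-C_t^{ij}|$, which also uses the mean property of $F_t^{ij}$ but packages it differently. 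Your argument is the standard textbook proof that the mean is a factor-$2$ competitor for the $L^1$-optimal constant; it avoids the case split on the sign of $C_t^{ij}-F_t^{ij}$ and is arguably a bit cleaner, while the paper's version makes the cancellation structure of the signed error $F_t-F_t^{ij}$ explicit. Both are complete and there is no gap.
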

\begin{proof}
Let us fix $i,j\in\{1,\ldots,k\}$. It holds
\begin{equation}\label{eq:linearity_of_integral}
\int\limits_{\Omega_i\times\Omega_j}|F_t-F_t^{ij}|=\int\limits_{\substack{(x,y)\in\Omega_i\times\Omega_j\\F_t(x,y)<F_t^{ij}}}(F_t^{ij}-F_t)+\int\limits_{\substack{(x,y)\in\Omega_i\times\Omega_j\\F_t(x,y)>F_t^{ij}}}(F_t-F_t^{ij})\;,
\end{equation}
and the two integrals on the right hand side of (\ref{eq:linearity_of_integral}) equals each other (by the definition of $F_t^{ij}$). Therefore it is enough to show that one of these integrals is less or equal to $\int\limits_{\Omega_i\times\Omega_j}|F_t-C_t^{ij}|$. Assume for example that $C_t^{ij}\ge F_t^{ij}$ (the complementary case is similar). Then we have
\begin{equation}\nonumber
\int\limits_{\substack{(x,y)\in\Omega_i\times\Omega_j\\F_t(x,y)<F_t^{ij}}}(F_t^{ij}-F_t)\le\int\limits_{\substack{(x,y)\in\Omega_i\times\Omega_j\\F_t(x,y)<C_t^{ij}}}(C_t^{ij}-F_t)\le\int\limits_{\Omega_i\times\Omega_j}|F_t-C_t^{ij}|\;,
\end{equation}
as we wanted.
\end{proof}

Inequality~(\ref{eq:aproximation_by_step_function}) combined with Claim~\ref{cl:mean_value} gives us (\ref{eq:approximation_by_mean_values}).

\section{Details concerning Footnote~\ref{f:almostallperfect}}\label{app:almostallperfect}
Recall a celebrated theorem of P\'osa~\cite{Posa1976} that there exists a constant $C>0$ so that the random graph $\mathbb{G}(2n,\frac{C\log n}n)$ contains a Hamilton cycle with probability $1-o_n(1)>\frac12$. Note that Hamiltonicity implies the existence of a perfect matching. Let $E_{2n}$ be the union of edges obtained in $\sqrt[3]{n}$ independent copies of $\mathbb{G}(2n,C\log n/n)$. Since we $\sqrt[3]{n}\cdot \frac{C\log n}n<1/\sqrt{n}$, get that the edge set $E_{2n}$ is stochastically dominated (with respect to inclusion) by $\mathbb{G}(2n,1/\sqrt{n})$. Hence the probability that $\mathbb{G}(2n,1/\sqrt{n})$ does not contain a perfect matching is at most $$p_{2n}:=\left(\frac{1}{2}\right)^{\sqrt[3]{n}}\;.$$
The sequence $(p_{2n})_n$ is summable, and hence the Borel--Cantelli lemma tells us that at most finitely many ``bad events'' occur.
\end{document}